\documentclass[11pt,reqno]{amsart}

\usepackage{stix2}
\usepackage[colorlinks]{hyperref}
\usepackage{amssymb}
\usepackage{bbm}
\usepackage[all,cmtip]{xy}
\usepackage{cite}

\newcommand{\D}{{\mathbb D}}
\newcommand{\C}{{\mathbb C}}
\newcommand{\R}{{\mathbb R}}
\newcommand{\T}{{\mathbb T}}
\newcommand{\ind}{\int_{\D}}
\newcommand{\aut}{{\rm Aut}\,(\D)}
\newcommand{\inc}{\int_{\C}}
\newcommand{\inr}{\int_{\R}}

\newtheorem{thm}{Theorem}[section]
\newtheorem{cor}[thm]{Corollary}
\newtheorem{lem}[thm]{Lemma}
\newtheorem{prop}[thm]{Proposition}

\theoremstyle{definition}
\newtheorem{defn}[thm]{Definition}

\numberwithin{equation}{section}

\newtheorem*{cor*}{Corollary}

\allowdisplaybreaks

\title[Localization operators]
{Localization operators on Bergman and Fock spaces}

\author{Pan Ma}
\address[Pan Ma]{School of Mathematics and Statistics, HNP-LAMA,
Central South University, Changsha, Hunan 410083, China}
\email{pan.ma@csu.edu.cn}

\author{Fugang Yan}
\address[Fugang Yan]{College of Mathematics and Statistics,
Chongqing University, Chongqing 401331, China; and Key
Laboratory of Nonlinear Analysis and its Applications (Chongqing
University), Ministry of Education, Chongqing 401331, China}
\email{fugangyan@cqu.edu.cn}

\author{Dechao Zheng}
\address[Dechao Zheng]{Department of Mathematics,
Vanderbilt University, Nashville, TN 37240, USA.}
\email{dechao.zheng@vanderbilt.edu}

\author{Kehe Zhu}
\address[Kehe Zhu]{Department of Mathematics and Statistics, SUNY,
Albany, NY 12222, USA.}
\email{kzhu@albany.edu}

\keywords{Bergman space; Fock space; localization operator;
Toeplitz operator; Bargmann transform; time-frequency analysis;
Szeg\"o theorem.}

\makeatletter
\@namedef{subjclassname@2020}{\textup{2020} Mathematics Subject Classification}
\makeatother
\subjclass[2020]{47B35, 30H20.}

\begin{document}

\begin{abstract}
We introduce localization operators on weighted Bergman and
Fock spaces and show that, under a natural scaling of symbols and
window functions, localization operators on the weighted
Bergman space $A_{\beta r^2}^2$ converge, in the weak sense,
to localization operators on the Fock space $F_{\beta}^2$
as $r\to\infty$. From this we derive several applications, including
one about sharp norm estimates for certain Toeplitz operators
on Fock spaces, one about windowed Berezin transforms for
weighted Bergman spaces, and another about Szeg\"{o}-type
theorems for localization operators on weighted Bergman spaces.
\end{abstract}

\maketitle

\section{Introduction}

Localization operators, first introduced by Daubechies \cite{Da} in
time-frequency analysis, serve as a mathematical tool for localizing a
signal in phase space. They play an important role in signal processing,
quantum mechanics, and related areas \cite{FN2, Wo1, Wo2}. More
precisely, the time-frequency localization operator $L_{f}^{\phi,\psi}$
on $L^2(\R)=L^2(\R, dx)$, associated with a symbol function $f$ on
$\R^2$ and windows $\phi,\psi\in L^2(\R)$, is defined by the
sesquilinear form
$$\langle L_{f}^{\phi,\psi}g,\,h\rangle=\inr d\omega\inr f(x,\omega)
\langle g,\,M_{\omega}T_x\phi\rangle
\langle M_{\omega}T_x\psi,\,h\rangle\,dx,$$
where $\langle\ ,\ \rangle$ is the inner product in $L^2(\R)$, $T_x$
is the translation operator, and $M_\omega$ is the modulation operator:
$$T_xf(t)=f(t-x),\quad M_{\omega}f(t)=e^{2\pi i\omega t}f(t),
\qquad f\in L^2(\R).$$
Numerous properties of time-frequency localization operators,
including boundedness, compactness, and spectral behavior, have
been extensively studied in the literature; see
\cite{BOW, CG, DNW, FG, RT1, RT2}.

In this paper, we introduce and study localization
operators on Fock spaces of the complex plane and weighted
Bergman spaces (with standard radial weights) of the unit disc.

Recall that, for any positive parameter $\beta$, the Fock space
$F^2_\beta$ of the complex plane $\C$ consists of all entire
functions $f$ with
$$\|f\|^2_{F_{\beta}^2}=\int_{\C}|f(z)|^2d\mu_{\beta}(z)
<\infty,$$
where
$$d\mu_{\beta}(z)=\frac{\beta}{\pi}e^{-\beta|z|^2}dA(z)$$
is the Gaussian measure. Here $dA$ is area measure on
the complex plane $\C$. Note that $F^2_\beta$ is a
reproducing kernel Hilbert space with kernel function
$e^{\beta z\overline w}$.

Also recall that, for any parameter $\alpha>-1$, the weighted
Bergman space $A^2_\alpha$ of the open unit disc
$\D=\{z\in\C: |z|<1\}$ consists of all analytic functions $f$
on $\D$ such that
$$\|f\|^2_{A^2_\alpha}=\ind|f(z)|^2\,dA_\alpha(z)<\infty,$$
where
$$dA_\alpha(z)=\frac{\alpha+1}{\pi}(1-|z|^2)^\alpha\,dA(z).$$
$A^2_\alpha$ is also
a reproducing kernel Hilbert space with kernel function
$(1-z\overline w)^{-(2+\alpha)}$. When $\alpha=0$, we
simply write $A^2$ instead of $A^2_0$. It is well known and
easy to verify that the fractional differential operator
$V^\alpha$ defined by
$$V^\alpha f(z)=\sum_{n=0}^\infty\frac{a_n}{\sqrt{n+1}}
\sqrt{\frac{\Gamma(n+\alpha+2)}{n!\,\Gamma(\alpha+2)}}
\,z^n,\qquad f(z)=\sum_{n=0}^\infty a_n\,z^n,$$
is a unitary transformation from $A^2$ to $A^2_\alpha$.

Roughly speaking, the unitary translation and modulation
operators on $L^2(\R)$ correspond to the so-called Weyl
unitary operators on $F^2_\beta$ and a similar family of
unitary operators on $A^2_\alpha$. More specifically, for
any $\beta>0$ and any $z\in\C$, the Weyl unitary
operator $W^\beta_z$ on $F^2_\beta$ is defined by
$$W^\beta_zf(\zeta)=f(\zeta-z)e^{\beta\overline z\zeta
-(\beta|z|^2/2)}.$$
Similarly, for any $\alpha>-1$ and any $z\in\D$, we can
define a unitary operator $U^\alpha_z$ on $A^2_\alpha$ by
$$U^{\alpha}_zf(\zeta)=f\circ\varphi_z(\zeta)\left[\frac{1-|z|^2}
{(1-\overline z\zeta)^2}\right]^{(2+\alpha)/2},$$
where $\varphi_z(\zeta)=(\zeta-z)/(1-\overline z\zeta)$ is a
M\"obius map of the unit disc with $\varphi_z^{-1}=
\varphi_{-z}$.

The Weyl operators $W^\beta_z$ constitute the main part
of the Weyl unitary representation of the Heisenberg group
on $F^2_\beta$. The remaining part consists roughly of
rotations. Similarly, the operators $U_z^\alpha$ constitute the
main part of a natural unitary representation of the M\"obius
group of $\D$ on the Hilbert space $A^2_\alpha$, and the
remaining part roughly corresponds to rotations again.
We will use $\T=\partial\D$ to denote the unit circle,
which is used to represent rotations for both $\C$ and
$\D$. We now define localization operators on $F^2_\beta$
and $A^2_\alpha$ as follows.

\begin{defn}\label{Fock localization}
Let $\phi,\psi\in F^2_\beta$ and $f\in L^\infty(\T\times\C)$.
We define a linear operator $\mathbb L_f^{\phi,\psi,\beta}$
on $F^2_\beta$ by the sesquilinear form
$$\langle \mathbb L_f^{\phi,\psi,\beta}g,h\rangle=
\frac\beta\pi\int_0^{2\pi}\frac{d\theta}{2\pi}
\inc f(e^{i\theta},z)\langle g, W^\beta_z\phi_\theta
\rangle\langle W^\beta_z\psi_\theta,h\rangle\,dA(z),$$
where $\langle\ ,\ \rangle$ is the inner product in $F^2_\beta$
and $\phi_\theta(\zeta)=\phi(e^{i\theta}\zeta)$. We will call
$\mathbb L_f^{\phi,\psi,\beta}$ a localization operator on
$F^2_\beta$, with symbol $f$ and windows $\phi$ and $\psi$.
When $\phi=\psi$, we simply write
$\mathbb L_f^{\psi,\,\beta}=\mathbb L_f^{\psi,\psi,\,\beta}$.
\end{defn}

\begin{defn}\label{Bergman localization}
Let $\phi,\psi\in A^2_\alpha$ and $f\in L^\infty(\T\times\D)$.
We define a linear operator $\mathbf L_f^{\phi,\psi,\alpha}$
on $A^2_\alpha$ by the sesquilinear form
$$\langle \mathbf L_f^{\phi,\psi,\alpha}g,h\rangle=
(\alpha+1)\int_0^{2\pi}\frac{d\theta}{2\pi}\ind f(e^{i\theta},z)
\langle g, U^\alpha_z\phi_\theta\rangle\langle U^\alpha_z
\psi_\theta,h\rangle\,d\lambda(z),$$
where $\langle\ ,\ \rangle$ is the inner product in
$A^2_\alpha$, $\phi_\theta(\zeta)=\phi(e^{i\theta}\zeta)$, and
$$d\lambda(z)=\frac{dA(z)}{\pi(1-|z|^2)^2}$$
is the so-called M\"obius invariant area measure on $\D$. Again,
we will call $\mathbf L_f^{\phi,\psi,\alpha}$ a localization operator
on $A^2_\alpha$, with symbol $f$ and windows $\phi$ and $\psi$.
When $\phi=\psi$, we simply write
$\mathbf L_f^{\psi,\,\alpha}=\mathbf L_f^{\psi,\psi,\,\alpha}$.
\end{defn}

Our first main result is the following.

\begin{thm}\label{thma}
Suppose $\phi,\psi\in F_{\beta}^2$ with $\beta>0$ and
$f\in L^{\infty}(\T\times\C)$. For any $\sigma\geq 0$ we have
$$\lim_{r\to\infty}\left\langle
\mathbf L_{f_{r,\,\sigma}}^{\phi_r,\psi_r,\,\beta r^2}g_r,\,h_r
\right\rangle_{A^2_{\beta r^2}}=\left\langle
\mathbb L_f^{\phi,\psi,\beta}g,\,h\right\rangle_{F_{\beta}^2},
\qquad g,h\in F^2_{\beta},$$
where $\phi_r(z)=\phi(rz)$ and
$$f_{r,\,\sigma}(e^{i\theta},z)=(1-|z|^2)^{\sigma}
f(e^{i\theta},rz),\qquad z\in\D.$$
\end{thm}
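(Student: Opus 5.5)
Write $\alpha=\beta r^2$ and change variables $z=w/r$ in the Bergman integral, so that $d\lambda(z)=dA(w)/(\pi r^2(1-|w|^2/r^2)^2)$ for $w\in r\D$. The prefactor becomes $(\beta r^2+1)/(\pi r^2)\to\beta/\pi$, which is exactly the Fock normalization. The symbol factor $(1-|w|^2/r^2)^{\sigma}f(e^{i\theta},w)\chi_{r\D}(w)$ tends pointwise to $f(e^{i\theta},w)$ and is bounded by $\|f\|_\infty$. The theorem therefore reduces to two things. First, pointwise convergence, for each fixed $\theta$ and $w$, of
$$\langle g_r,U^{\beta r^2}_{w/r}(\phi_r)_\theta\rangle_{A^2_{\beta r^2}}\quad\text{to}\quad\langle g,W^\beta_w\phi_\theta\rangle_{F^2_\beta},$$
and likewise for the $\psi,h$ factor. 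Second, a domination or uniform integrability argument to justify passing the limit through $\int_0^{2\pi}\int_\C$.

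\emph{Pointwise convergence.} Note that $(\phi_r)_\theta=(\phi_\theta)_r$, so we may assume $\theta=0$. I will use the basic scaling fact that for $g\in F^2_\beta$, $\|g_r\|_{A^2_{\beta r^2}}\to\|g\|_{F^2_\beta}$ and $\langle g_r,h_r\rangle_{A^2_{\beta r^2}}\to\langle g,h\rangle_{F^2_\beta}$. This holds because the measure $dA_{\beta r^2}(w/r)$, viewed in the $w$ variable, is $\frac{\beta r^2+1}{\pi r^2}(1-|w|^2/r^2)^{\beta r^2}dA(w)$ on $r\D$. This density increases to $\frac{\beta}{\pi}e^{-\beta|w|^2}$ up to a factor tending to $1$, since $(1-t/r^2)^{r^2}\uparrow e^{-t}$. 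Monotone convergence, together with polarization, then gives the claim.

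Next I compute the rescaled operator $U^{\beta r^2}_{w/r}$ acting on $\phi_r$. In $\zeta$-coordinates $\zeta=u/r$, we have $\varphi_{w/r}(u/r)=\frac{1}{r}\cdot\frac{u-w}{1-\bar w u/r^2}$, so $\phi_r\circ\varphi_{w/r}(u/r)=\phi\bigl((u-w)/(1-\bar wu/r^2)\bigr)\to\phi(u-w)$. The multiplier is
$$\Bigl[\frac{1-|w|^2/r^2}{(1-\bar wu/r^2)^2}\Bigr]^{(2+\beta r^2)/2}\to e^{\beta\bar w u-\beta|w|^2/2}.$$
Hence $(U^{\beta r^2}_{w/r}\phi_r)(u/r)\to W^\beta_w\phi(u)$ locally uniformly in $u$. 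Unitarity gives $\|U^{\beta r^2}_{w/r}\phi_r\|=\|\phi_r\|\to\|\phi\|$. A standard argument (local uniform convergence plus convergence of norms in the varying-weight spaces) then yields the convergence of inner products. The cleanest route is to first take $\phi,g$ to be polynomials. In that case all quantities are explicit and dominated by Gaussian-type bounds, and the inner products are integrals over $u\in r\D$ to which dominated convergence applies. The general case follows by density, using the uniform bounds $|\langle g_r,U\phi_r\rangle|\le\|g_r\|\|\phi_r\|$ with $\|g_r\|\le C\|g\|$. The constant $C$ is uniform in $r$ because the Bergman density is pointwise at most $\frac{\beta r^2+1}{\beta r^2}$ times the Gaussian density.

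\emph{Domination.} This is the main obstacle. Boundedness of the integrand in $w$ is not integrable over $\C$. The plan is to use Cauchy--Schwarz in $(\theta,w)$: the integrand is bounded by $|a_r(\theta,w)|\,|b_r(\theta,w)|$, with $a_r=\langle g_r,U\phi_r\rangle$ and $b_r=\langle U\psi_r,h_r\rangle$. The resolution-of-identity (orthogonality) relation for the Möbius representation gives
$$(\alpha+1)\int|\langle g,U^\alpha_z\phi\rangle|^2d\lambda(z)=c_\alpha\|g\|^2\|\phi\|^2,$$
where the constant (for $\alpha=\beta r^2$, after rescaling) converges to the Fock constant. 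So $\|a_r\|_{L^2}\to\|a\|_{L^2}$, and similarly for $b_r$. Pointwise convergence combined with convergence of $L^2$ norms gives $a_r\to a$ in $L^2$ (Riesz/Brezis--Lieb), and likewise $b_r\to b$. Therefore $a_rb_r\to ab$ in $L^1$. Multiplying by the bounded, pointwise convergent symbol factor and applying the generalized dominated convergence theorem finishes the proof.

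The orthogonality constant computation will be checked on monomials or by a direct reproducing-kernel calculation, and it is the one delicate explicit step.
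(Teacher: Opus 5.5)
Your proposal is correct. Its skeleton matches the paper's: rescale $z=w/r$, get pointwise convergence of the rescaled coefficients, then pass the limit through the integral using pointwise convergence plus convergence of $L^2$ norms from the orthogonality relations. The genuine difference is where the density argument sits.

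The paper proves the pointwise limit (Lemma~\ref{4.2}) directly for $g,h\in F^2_\gamma$ with $\gamma<\beta$, keeping arbitrary windows in $F^2_\beta$. The growth bounds $|g(\zeta)|\le\|g\|_{F^2_\gamma}e^{\gamma|\zeta|^2/2}$ and $|\phi(\xi)|\le\|\phi\|_{F^2_\beta}e^{\beta|\xi|^2/2}$, together with a choice of $n$ with $\gamma<\beta\frac{(n-2)^2-2}{(n-1)^2}$, give an integrable majorant; for $g\in F^2_\beta$ they would not. The paper then applies the generalized dominated convergence theorem with majorant $\frac12\|f\|_\infty(|a_r|^2+|b_r|^2)$, whose integrals converge by Lemma~\ref{4.1}. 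Finally it passes from $F^2_\gamma$ to $F^2_\beta$ by approximating $g,h$ inside the whole sesquilinear form, using $|\langle\mathbf L g_r,h_r\rangle|\le\|f\|_\infty\|g_r\|\,\|h_r\|$ and Corollary~\ref{2.2}.

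You instead take $g$ and the window to be polynomials. Then domination needs only the kernel estimate $|1-\bar wu/r^2|^{-(2+\beta r^2)}\le Ce^{c|w||u|}$ on $r\D$, which is the paper's \eqref{eq4.6}. You extend the pointwise limit to all $g$ and all windows at once via unitarity of $U^{\beta r^2}_{w/r}$ and the uniform bound $\|g_r\|^2_{A^2_{\beta r^2}}\le\frac{\beta r^2+1}{\beta r^2}\|g\|^2_{F^2_\beta}$. Your Riesz/Brezis--Lieb step is the same mechanism as the paper's majorant. Since it now applies directly to arbitrary $g,h,\phi,\psi\in F^2_\beta$, the final $\varepsilon$-argument disappears. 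Your route is shorter and treats $g$ and the windows symmetrically; the paper's gives an explicit majorant on a concrete class without ever approximating the windows.

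There are two bookkeeping points to fix.

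First, the Jacobian $(1-|w|^2/r^2)^{-2}$ from $d\lambda$ must be split as one factor $(1-|w|^2/r^2)^{-1}$ into each of $a_r$ and $b_r$, as the paper does. If you leave it in the symbol, the symbol factor becomes $(1-|w|^2/r^2)^{\sigma-2}$, which is unbounded for $\sigma<2$. With that normalization, $\frac{\beta r^2+1}{\pi r^2}\int_0^{2\pi}\frac{d\theta}{2\pi}\int_{r\D}|a_r|^2\,dA=\|g_r\|^2\|\phi_r\|^2$ exactly, by Theorem~\ref{3.4}.

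Second, Theorem~\ref{3.4} is the $\theta$-averaged identity, with constant exactly $1$, and the average is essential. Without it the identity fails for windows that are not rotation eigenvectors, for example $\phi=(e^\alpha_0+e^\alpha_1)/\sqrt2$ with $g(\zeta)=1+\zeta$. So checking the constant on monomial windows alone, as you planned, would not suffice. Combined with Theorems~\ref{3.1} and~\ref{2.1}, Theorem~\ref{3.4} gives Lemma~\ref{4.1}, so the step you flag as delicate is already available in the paper.
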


Recall that, for any $f\in L^\infty(\C)$, the Toeplitz operator
$\mathbb T^\beta_f: F^2_\beta\to F^2_\beta$ is defined by
$$\mathbb T^\beta_fg(z)=\inc f(w)g(w)e^{\beta z\overline w}
\,d\mu_\beta(w).$$
Similarly, for any $f\in L^\infty(\D)$, the Toeplitz operator
$\mathbf T^\alpha_f: A^2_\alpha\to A^2_\alpha$ is defined by
$$\mathbf T^\alpha_fg(z)=\ind\frac{f(w)g(w)}
{(1-z\overline w)^{2+\alpha}}\,dA_\alpha(w).$$
The following sharp inequality for the norm of Toeplitz
operators on weighted Bergman spaces appeared
implicitly in \cite{NT}: if $f\in L^1(\D,d\lambda)\cap
L^{\infty}(\D)$, then
\begin{equation}\label{eq1.1}
\|\mathbf T_{f}^{\alpha}\|_{A_{\alpha}^2}\leq\left(1-
\frac{\|f\|_{\infty}^{\alpha+1}}{(\|f\|_{\infty}+
\|f\|_{L^1(\D,\,d\lambda)})^{\alpha+1}}\right)\|f\|_{\infty}\,,
\end{equation}
and equality holds if $f$ is the characteristic function
of a hyperbolic disc in $\D$.

If $\phi=\psi=1$ and $f(e^{i\theta},z)=f(z)$ is independent
of $\theta$, it is clear that
$$\mathbb L_f^{\phi,\psi,\beta}=\mathbb T^\beta_f\qquad
{\rm and}\qquad \mathbf L^{\phi,\psi,\alpha}_f
=\mathbf T^\alpha_f.$$
This together with Theorem~\ref{thma} and \eqref{eq1.1}
yields the following new norm estimate for Toeplitz operators
on the Fock space.

\begin{cor}
For $f\in L^1(\C)\cap L^{\infty}(\C)$ and $\beta>0$, we have
$$\|\mathbb T^{\beta}_{f}\|_{F_{\beta}^2}\leq
\left[1-\exp\left(-\frac{\beta}{\pi}\frac{\|f\|_{L^1(\C)}}
{\|f\|_{\infty}}\right)\right]\,\|f\|_{\infty}\,,$$
and equality holds if $f$ is the characteristic function of an
Euclidean disc in $\C$. Here $L^1(\C)=L^1(\C, dA)$.
\end{cor}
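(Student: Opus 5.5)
We prove the inequality first for $f\ge0$ and then pass to general $f$; the sharpness claim is handled separately at the end. By homogeneity we may assume $\|f\|_\infty=1$, and we set $\alpha=\beta r^2$.

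For $0\le f\le 1$ with $f\in L^1(\C)$, we apply Theorem~\ref{thma} with $\phi=\psi=1$, a $\theta$-independent symbol, and $\sigma=2$. The rescaled symbol on $\D$ is
$$f_r(z)=(1-|z|^2)^2f(rz).$$
Then $\|f_r\|_\infty\le 1$, and
$$\|f_r\|_{L^1(\D,d\lambda)}=\frac1\pi\ind f(rz)\,dA(z)=\frac{\|f\|_{L^1(\C)}}{\pi r^2}.$$
The choice $\sigma=2$ is made precisely to cancel the $(1-|z|^2)^{-2}$ in $d\lambda$.

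Because the windows are constant, $\phi_r=1$, and the right-hand side of \eqref{eq1.1} is increasing in $\|f\|_\infty$ once the $L^1$ norm is fixed. We may therefore bound it with $\|f\|_\infty$ replaced by $1$. This gives
$$\|\mathbf T^{\beta r^2}_{f_r}\|\le 1-\Bigl(1+\tfrac{\|f\|_1}{\pi r^2}\Bigr)^{-(\beta r^2+1)}\longrightarrow 1-e^{-\beta\|f\|_1/\pi}.$$
The monotonicity holds because $t\mapsto t\bigl(1-(t/(t+a))^{\alpha+1}\bigr)$ is nondecreasing; this is a short calculus check.

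Next, for $g,h\in F^2_\beta$ we use $g_r(z)=g(rz)$. Theorem~\ref{thma} gives
$$\langle \mathbb T^\beta_f g,h\rangle=\lim_{r\to\infty}\langle \mathbf T_{f_r}g_r,h_r\rangle .$$
We need the normalization $\|g_r\|_{A^2_{\beta r^2}}\to\|g\|_{F^2_\beta}$, which I expect to be the main technical point. For monomials it follows from
$$\|z^n\|_{A^2_\alpha}^2=\frac{n!\,\Gamma(\alpha+2)}{\Gamma(n+\alpha+2)},$$
which gives $r^{2n}\|z^n\|^2_{A^2_{\beta r^2}}\to n!/\beta^n=\|z^n\|^2_{F^2_\beta}$. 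For general $g$ we need a monotonicity or dominated-convergence argument in the coefficients: the ratio $r^{2n}n!\Gamma(\alpha+2)/\Gamma(n+\alpha+2)\cdot\beta^n/n!$ is at most $1$ for $\alpha=\beta r^2$, since $\prod_{k=2}^{n+1}(\beta r^2+k)\ge(\beta r^2)^n$. This ratio tends to $1$ for each $n$, so dominated convergence applies. Then
$$|\langle \mathbb T^\beta_f g,h\rangle|\le\bigl(1-e^{-\beta\|f\|_1/\pi}\bigr)\|g\|\,\|h\|.$$

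For complex $f$, I am \emph{not} assuming \eqref{eq1.1} holds for complex symbols, and the positive-symbol argument does not transfer directly. Instead I would use the elementary fact $|\langle \mathbf T_{f}g,h\rangle|\le\langle \mathbf T_{|f|}g,g\rangle^{1/2}\langle \mathbf T_{|f|}h,h\rangle^{1/2}$ on the Fock side. This reduces the complex case to $|f|$, which has the same $L^\infty$ and $L^1$ norms.

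For sharpness, let $f=\chi_{D(a,R)}$. Since the Weyl operators are unitary, we may take $a=0$. Then $\mathbb T_f$ is diagonal on the monomials, and its largest eigenvalue is at $n=0$:
$$\mu_\beta(D(0,R))=1-e^{-\beta R^2}=1-e^{-\beta\|f\|_1/\pi},$$
which matches the bound.
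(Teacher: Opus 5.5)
Your proof is correct and is essentially the paper's argument. As in the paper, you choose $\sigma=2$ so that $\|f_{r,2}\|_{L^1(\D,d\lambda)}=\frac{1}{\pi r^2}\int_{r\D}|f|\,dA$, apply the Bergman estimate \eqref{eq1.1} on $A^2_{\beta r^2}$, pass to the limit using $\langle \mathbf T^{\beta r^2}_{f_{r,2}}g_r,h_r\rangle\to\langle\mathbb T^\beta_f g,h\rangle$ together with $\|g_r\|_{A^2_{\beta r^2}}\to\|g\|_{F^2_\beta}$ (Corollary~\ref{2.2}), and get sharpness from \eqref{eq4.18}. Your monotonicity trick in the $\|f\|_\infty$ variable, your coefficientwise proof of the norm limit, and your Fock-side reduction to $|f|$ are only cosmetic variants of the corresponding steps there.

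Two small points. First, $\frac1\pi\ind f(rz)\,dA(z)=\frac{1}{\pi r^2}\int_{r\D}f\,dA$ is in general only $\le\|f\|_{L^1(\C)}/(\pi r^2)$, not equal to it; this is harmless because the bound increases with the $L^1$ norm. Second, for off-center discs the fact you need is the covariance $(W^\beta_a)^*\mathbb T^\beta_f W^\beta_a=\mathbb T^\beta_{f(\cdot+a)}$, not mere unitarity of $W^\beta_a$.
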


Some special cases of this corollary with $\beta=\pi$ were known
in the literature. For example, Galbis \cite{Ga} obtained the result
for real-valued and radial symbols, while Huang-Zhang \cite{HZ}
treated the case of real-valued symbols.

Our next result concerns the limit behavior of windowed
Berezin transforms on $A^2_\alpha$. Recall that the ordinary
(unwindowed) Berezin transform associated with $A^2_\alpha$
is the integral operator
$$B_\alpha f(z)=\ind f(w)\frac{(1-|z|^2)^{2+\alpha}}
{|1-\overline zw|^{2(2+\alpha)}}\,dA_\alpha(w)=
\ind f(w)\left|U^\alpha_z1(w)\right|^2\,dA_\alpha(w).$$
The Berezin transform is an important and useful tool in
operator theory of holomorphic function spaces;
see \cite{AFR, AZ, En, MSW, Su, Zhu}.

Given a window function $\psi\in A^2_\alpha$, we
define the windowed Berezin transform $B^\psi_\alpha$
as the following integral operator:
$$B^\psi_\alpha f(e^{i\theta},z)=(\alpha+1)
\int_0^{2\pi}\frac{dt}{2\pi}\ind f(e^{it},w)
\left|\langle U^\alpha_z\psi_\theta,U^\alpha_w\psi_t
\rangle_{A_{\alpha}^2}\right|^2\,d\lambda(w),$$
where $\psi_t(z)=\psi(e^{it}z)$ like before. It is clear that if
$\psi=1$ and $f(e^{i\theta},z)=f(z)$ is independent of $\theta$,
then the windowed Berezin transform reduces to the ordinary
Berezin transform.

We can now state the second main result of the paper.

\begin{thm}\label{thmb}
Suppose $\psi\in A^2$ with $\|\psi\|_{A^2}=1$,
$\psi^\alpha=V^\alpha\psi$, and $1\leq p<\infty$. For
$f\in L^p(\D,d\lambda)$, we have
$$\lim_{\alpha\to\infty}\|B_{\alpha}^{\psi^{\alpha}}f-
f\|_{L^p(\T\times \D)}=0,$$
where $L^p(\T\times\D)=L^p(\T\times \D,\frac{d\theta}{2\pi}\,d\lambda)$.
\end{thm}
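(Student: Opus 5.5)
The plan is to write $B^{\psi^\alpha}_\alpha$ as convolution on the group $G=\T\times\aut$ (restricted to the quotient $\T\times\D$) against a probability kernel, and then show that this kernel concentrates at the identity as $\alpha\to\infty$. Set
$$K_\alpha(e^{i\theta},z;e^{it},w)=(\alpha+1)\left|\langle U^\alpha_z\psi^\alpha_\theta,U^\alpha_w\psi^\alpha_t\rangle\right|^2 .$$

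The first step is invariance. Using $U^\alpha_zU^\alpha_w=$ (unimodular constant)$\cdot U^\alpha_{\varphi_z(w)}$ composed with a rotation, together with the fact that rotations commute with $V^\alpha$, one checks that $K_\alpha$ depends only on the "relative position" $(e^{i(t-\theta)},\varphi_z(w))$ up to a rotation. Concretely,
$$K_\alpha(e^{i\theta},z;e^{it},w)=k_\alpha(\Phi_{\theta,z}(e^{it},w)),$$
where $k_\alpha(e^{is},u)=(\alpha+1)|\langle\psi^\alpha,U^\alpha_u\psi^\alpha_s\rangle|^2$ and $\Phi_{\theta,z}$ is a measure-preserving bijection of $(\T\times\D,\frac{dt}{2\pi}d\lambda)$ carrying $(e^{i\theta},z)$ to $(1,0)$.

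The second step is the normalization $\int k_\alpha=1$. This follows from the reproducing (Schur/Moyal) identity: the operator $(\alpha+1)\int\frac{ds}{2\pi}\int_\D U^\alpha_u\psi_s\otimes U^\alpha_u\psi_s\,d\lambda(u)$ commutes with all $U^\alpha_z$ and with rotations, so it is a scalar. Its value is $\|\psi^\alpha\|^2=1$, which one sees by testing on the kernel at $0$ (or by a direct computation with the coefficients, since $\int U_u1\otimes U_u1\,d\lambda\,(\alpha+1)=I$). Consequently $B^{\psi^\alpha}_\alpha$ is an average of $f\circ\Phi^{-1}$ against a probability measure. By Minkowski's integral inequality and the measure-preservation of the group action on $L^p$,
$$\|B^{\psi^\alpha}_\alpha f-f\|_p\le\int k_\alpha(\gamma)\,\|f\circ\tau_\gamma-f\|_p\,d\gamma,$$
where $\tau_\gamma$ is the corresponding group translation. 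In particular the operators are uniformly bounded on $L^p$ with norm at most $1$. By density it therefore suffices to treat $f$ continuous with compact support, for which $\gamma\mapsto\|f\circ\tau_\gamma-f\|_p$ is bounded by $2\|f\|_p$ and continuous, vanishing at the identity.

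The third step, and the main obstacle, is concentration: for every neighborhood $N$ of $(1,0)$, $\int_{N^c}k_\alpha\to0$. I would first treat $\psi$ a polynomial and then pass to general $\psi$ by density. This density passage is legitimate because $\psi\mapsto k_\alpha$ is continuous from $A^2$ to $L^1$ uniformly in $\alpha$: $V^\alpha$ is unitary, and the normalization identity gives $\|k^{\psi}-k^{\phi}\|_1\le2\|\psi-\phi\|\,(\|\psi\|+\|\phi\|)$.

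For $\psi=\sum_{n\le N}a_nz^n$, I would compute
$$\langle\psi^\alpha,U^\alpha_u\psi^\alpha_s\rangle=(1-|u|^2)^{(2+\alpha)/2}\times P_\alpha(u,e^{is}),$$
where $P_\alpha$ is a polynomial-type expression with controlled growth in $\alpha$. The factor $(\alpha+1)(1-|u|^2)^{\alpha+2}$ integrated against $d\lambda$ over $|u|>\delta$ decays like $(\alpha+1)(1-\delta^2)^{\alpha}$, which beats the polynomial growth. Concentration in $u$ then follows. Concentration in $s$ near $u=0$ comes from
$$\langle\psi^\alpha,\psi^\alpha_s\rangle=\sum|a_n|^2e^{ins}\cdot(\text{scaling}),$$
so near $u=0$ the kernel behaves like $(\alpha+1)\,|\cdot|^2$ restricted to a shrinking $u$-disc of $\lambda$-measure about $1/(\alpha+1)$.

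Here I expect a subtlety. The $s$-marginal need not concentrate at $s=0$ unless $|\sum|a_n|^2e^{ins}|=1$ only at $s=0$. If that fails, I would instead control $\|f\circ\tau_\gamma-f\|_p$ only after integrating over the rotations the kernel actually charges, using the explicit structure of $U^\alpha_u$ on $\psi^\alpha$ at scale $|u|\sim\alpha^{-1/2}$. Making this rescaled limit precise, where $U^\alpha_u\psi^\alpha$ mimics $W_z\psi$ on a Fock space as in Theorem~\ref{thma}, is the hard core of the argument.
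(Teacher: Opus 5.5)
\textbf{There is a genuine gap: Step 3 is false as stated.} You need $\int_{N^c}k_\alpha\to0$ for every neighbourhood $N$ of the identity $(1,0)$ of $\T\times\D$, and this fails even for the most basic windows.

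For $\psi=1$ (the classical Berezin transform) or $\psi=z^n$, one has $(\psi^\alpha)_s=e^{ins}\psi^\alpha$. So $k_\alpha(e^{is},u)$ does not depend on $s$ at all.

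Your criterion does not rescue other windows either. Take $\psi=(1+\sqrt2\,z)/\sqrt2$, so $\psi^\alpha=(e^\alpha_0+e^\alpha_1)/\sqrt2$. Then $|\langle\psi^\alpha,(\psi^\alpha)_s\rangle|=|\cos(s/2)|$, which equals $1$ only at $s=0$. Yet a direct computation with $e^\alpha_0,e^\alpha_1$ gives $\int_\D k_\alpha(e^{is},u)\,d\lambda(u)=1-\frac{\cos s}{2(\alpha+2)}$. So the $s$-marginal tends to the uniform distribution rather than concentrating at $s=0$. Your heuristic based on the value of the kernel at $u=0$ is misleading, because the mass of $k_\alpha$ sits at $|u|\sim\alpha^{-1/2}$.

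Consequently the approximate-identity argument cannot be closed as you set it up. The Fock-rescaling remedy you sketch aims at a concentration that does not occur, and as written it is not yet an argument.

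\textbf{The missing idea is that no concentration in the rotation variable is needed}, because $f\in L^p(\D,d\lambda)$ does not depend on the $\T$-coordinate. With the group law \eqref{eq3.2}, translating by $\gamma=(e^{is},u)$ turns $f$ into $(e^{i\theta},z)\mapsto f(\varphi_{-z}(-e^{i(s-\theta)}u))$. The point $\varphi_{-z}(-e^{i(s-\theta)}u)$ lies at pseudo-hyperbolic distance exactly $|u|$ from $z=\varphi_{-z}(0)$. Hence for $f\in C_c(\D)$ you get $\sup_s\|f\circ\tau_{(e^{is},u)}-f\|_{L^p(\T\times\D)}\to0$ as $u\to0$.

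Your Minkowski bound therefore only requires concentration of $k_\alpha$ in $u$. Your polynomial estimate (a power of $\alpha$ times $(1-\delta^2)^{\alpha}$) already supplies this; it is exactly Lemma~\ref{5.1}. Combined with your $L^1$-continuity in $\psi$ and density in $f$, that finishes the proof.

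The paper rests on the same observation. In Lemma~\ref{5.2} the symbol enters as $f(\varphi_{-z}(-w))$, independent of $t$, so only the tail in $w$ matters. The paper then passes from pointwise convergence for $g\in C_c(\D)$ to $L^p$ convergence by dominated convergence, using $|B^{\psi^\alpha}_\alpha g|^p\le B^{\psi^\alpha}_\alpha(|g|^p)$ and $\int B^{\psi^\alpha}_\alpha(|g|^p)\,dH=\|g\|_p^p$. It reaches general $f$ through $\|B^{\psi^\alpha}_\alpha\|_{L^p\to L^p}\le1$ (Lemma~\ref{5.3}).

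Once your third step is corrected this way, your Minkowski and continuity-of-translation route is a valid alternative. It gives the uniform $L^p$ bound and the convergence at once, without the pointwise analysis.
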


As an application of Theorem~\ref{thmb}, we obtain the following
Szeg\"{o}-type theorem, the third main result of the paper, for
localization operators on weighted Bergman spaces.

\begin{thm}\label{thmc}
Suppose $\psi$ is a unit vector in $A^2$, $\psi^\alpha=V^\alpha
\psi$, and $f$ is a non-negative function in $L^1(\D,d\lambda)
\cap L^{\infty}(\D)$. If $h$ is continuous on the closed interval
$[0,\|f\|_{\infty}]$, then
$$\lim_{\alpha\to\infty}\frac{{\rm tr}(\mathbf L^{\psi^{\alpha},
\alpha}_fh(\mathbf L^{\psi^{\alpha},\alpha}_f))}{\alpha+1}
=\ind f(z)h(f(z))\,d\lambda(z).$$
\end{thm}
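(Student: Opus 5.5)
Write $L_\alpha=\mathbf L^{\psi^\alpha,\alpha}_f$, where $f$ is viewed as a function on $\T\times\D$ independent of $\theta$. The standard Szeg\"o-type route has three steps: first treat $h(x)=x^m$ (so that $xh(x)=x^{m+1}$), then pass to polynomials by linearity, and finally to continuous $h$ by Weierstrass approximation. Before that we need a few basic facts about $L_\alpha$.

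\textbf{Basic facts.} Since $f\ge0$, the operator $L_\alpha$ is positive. From the resolution of identity
$$(\alpha+1)\int_0^{2\pi}\frac{d\theta}{2\pi}\ind \langle g,U_z^\alpha\psi^\alpha_\theta\rangle\langle U_z^\alpha\psi^\alpha_\theta,h\rangle\,d\lambda(z)=\|\psi^\alpha\|^2\langle g,h\rangle=\langle g,h\rangle,$$
which holds by Schur orthogonality for the discrete series representation, we get $0\le L_\alpha\le\|f\|_\infty I$. Hence $h(L_\alpha)$ is defined for $h$ continuous on $[0,\|f\|_\infty]$. Moreover $L_\alpha$ is trace class, with
$${\rm tr}(L_\alpha)=(\alpha+1)\int\frac{d\theta}{2\pi}\ind f\,\|U_z^\alpha\psi^\alpha_\theta\|^2\,d\lambda=(\alpha+1)\ind f\,d\lambda .$$
Thus the statement holds for $h\equiv1$.

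\textbf{Case $h(x)=x^m$.} We must show
$$\frac{{\rm tr}(L_\alpha^{m+1})}{\alpha+1}\to\ind f^{m+1}\,d\lambda .$$
The standard device is a two-sided inequality.

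For the lower bound, set $k_{z,\theta}=U_z^\alpha\psi^\alpha_\theta$, which is a unit vector. Since $t\mapsto t^{m+1}$ is convex, Jensen's inequality applied to the spectral measure of $L_\alpha$ in the state $k_{z,\theta}$ gives
$$\langle L_\alpha^{m+1}k,k\rangle\ge\langle L_\alpha k,k\rangle^{m+1}.$$
Integrating over $(\theta,z)$ with the measure $(\alpha+1)\frac{d\theta}{2\pi}d\lambda$, the left side becomes ${\rm tr}(L_\alpha^{m+1})$ by the resolution of identity. On the right, $\langle L_\alpha k_{z,\theta},k_{z,\theta}\rangle=B^{\psi^\alpha}_\alpha f(e^{i\theta},z)$ by definition. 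Hence
$$\frac{{\rm tr}(L_\alpha^{m+1})}{\alpha+1}\ge\int\!\!\int (B^{\psi^\alpha}_\alpha f)^{m+1}.$$

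For the upper bound, write
$$L_\alpha^{m+1}=L_\alpha^{m/2}L_\alpha L_\alpha^{m/2}.$$
Expanding the middle factor $L_\alpha$ via its definition gives
$${\rm tr}(L_\alpha^{m+1})=(\alpha+1)\int\!\!\int f\,\langle L_\alpha^m k,k\rangle .$$
Again by Jensen (here the spectral measure is a probability measure and $t^{m}$ is convex for $m\ge1$), or more directly by Hölder in the form $\langle A^mk,k\rangle\ge\langle Ak,k\rangle^m$, the inequality goes the wrong way. So instead we use the operator bound $L_\alpha^m\le\|f\|_\infty^{m-1}L_\alpha$ combined with induction, or better a symmetric argument in the Berezin–Lieb style:
$${\rm tr}(\Phi(L_\alpha))\le(\alpha+1)\int\!\!\int\Phi(f)\quad\text{for convex }\Phi\text{ with }\Phi(0)=0.$$
This upper Berezin–Lieb inequality follows by writing $L_\alpha=\int f\,P_{z,\theta}$, where $P_{z,\theta}$ are weighted rank-one projections, and applying the operator Jensen/Berezin–Lieb inequality to the eigenvectors $e_j$ of $L_\alpha$:
$$\Phi(\langle L_\alpha e_j,e_j\rangle)\le\int\Phi(f)\,|\langle e_j,k\rangle|^2(\alpha+1)\,d\theta\, d\lambda .$$
Summing over $j$ and using $\sum_j|\langle e_j,k\rangle|^2\le1$ gives the bound. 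Therefore
$$\ind\bigl(B^{\psi^\alpha}_\alpha f\bigr)^{m+1}\le\frac{{\rm tr}(L_\alpha^{m+1})}{\alpha+1}\le\ind f^{m+1}\,d\lambda .$$

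\textbf{Main obstacle: the lower limit.} It remains to show
$$\int\!\!\int (B^{\psi^\alpha}_\alpha f)^{m+1}\to\int f^{m+1}.$$
Theorem~\ref{thmb} with $p=m+1$ gives $B^{\psi^\alpha}_\alpha f\to f$ in $L^{m+1}(\T\times\D)$; note that $f\in L^1\cap L^\infty$ lies in $L^{m+1}(d\lambda)$. Convergence of the $L^{m+1}$ norms follows, and since $B^{\psi^\alpha}_\alpha f\ge0$, this is exactly the needed convergence. Thus the case $h(x)=x^m$ is done, and by linearity the statement holds for all polynomials.

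\textbf{Passage to continuous $h$.} Given $\varepsilon>0$, choose a polynomial $p$ with $|h-p|<\varepsilon$ on $[0,\|f\|_\infty]$. Then
$$|{\rm tr}(L_\alpha(h-p)(L_\alpha))|\le\varepsilon\,{\rm tr}(L_\alpha)=\varepsilon(\alpha+1)\|f\|_{L^1(d\lambda)},$$
and similarly
$$\Bigl|\ind f(h-p)(f)\,d\lambda\Bigr|\le\varepsilon\|f\|_{L^1(d\lambda)}.$$
Letting $\varepsilon\to0$ completes the proof.
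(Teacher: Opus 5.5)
Your argument is correct. Structurally it matches the paper's first proof of Theorem~\ref{5.6}: reduce to monomials via Weierstrass approximation and the trace-norm bound \eqref{eq5.12}--\eqref{eq5.14}, trap ${\rm tr}(L_\alpha^{m+1})/(\alpha+1)$ between two integrals involving the windowed Berezin transform, and close with Theorem~\ref{thmb}. The difference lies in the two-sided bound and in how it is obtained.

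The paper transfers $L_\alpha$ to the Toeplitz-type operator $\mathbf T^{\psi^\alpha,\alpha}_f$ on the reproducing kernel space $\mathcal V^\alpha_{\psi}$ (Proposition~\ref{5.7} and \eqref{eq5.15}). It then quotes \cite[Proposition 2.1]{CM} to get
$$\int\!\!\int f\,(B^{\psi^\alpha}_\alpha f)^m\,dH\le\frac{{\rm tr}(L_\alpha^{m+1})}{\alpha+1}\le\int\!\!\int f^m\,B^{\psi^\alpha}_\alpha f\,dH,$$
and shows that both ends converge using only the $L^1$ case of Theorem~\ref{thmb} and the Lipschitz bound for $x^m$ on $[0,\|f\|_\infty]$.

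You instead derive the classical Berezin--Lieb inequalities directly on $A^2_\alpha$, from the resolution of the identity (Theorem~\ref{3.4}) and Jensen's inequality. Your upper end $\ind f^{m+1}\,d\lambda$ is therefore already the limit. Only the lower end $\int\!\!\int (B^{\psi^\alpha}_\alpha f)^{m+1}\,dH$ needs Theorem~\ref{thmb}, now with $p=m+1$ together with $B^{\psi^\alpha}_\alpha f\ge0$.

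Your route is self-contained, since it needs neither $\mathcal V^\alpha_\psi$ nor the external citation, and it works verbatim for any convex $\Phi$ with $\Phi(0)=0$. The paper's route needs only $L^1$ convergence of the windowed Berezin transform.

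One editorial point: the computation you abandoned,
$${\rm tr}(L_\alpha^{m+1})=(\alpha+1)\int\!\!\int f\,\langle L_\alpha^m k,k\rangle\,dH\ge(\alpha+1)\int\!\!\int f\,(B^{\psi^\alpha}_\alpha f)^m\,dH,$$
is precisely the paper's lower bound. As an attempt at an upper bound it is a dead end, so that passage should simply be cut from your write-up.
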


The special case $\psi=1$ has been obtained by Camper and Mitkovski
in \cite{CM}. We also refer to \cite{FN1} for a Szeg\"{o}-type theorem
of Gabor-Toeplitz localization operators. Two interesting consequences of
Theorem~\ref{thmc} are given below.

\begin{cor}
Suppose $\psi$ is a unit vector in $A^2$, $\psi^\alpha=V^\alpha\psi$,
and $f$ is a non-negative function in $L^1(\D,d\lambda)\cap
L^{\infty}(\D)$. For $0<\delta\leq\|f\|_{\infty}$ we have
$$\lim_{\alpha\to\infty}\frac{\#\left\{i:\lambda_i(
\mathbf L_{f}^{\psi^{\alpha},\alpha})>\delta\right\}}{\alpha+1}
=\lambda(\{z\in\D:f(z)>\delta\}),$$
where $\lambda_i(T)$ denotes the $i$-th singular value of $T$.
\end{cor}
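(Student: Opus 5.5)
Since $\mathbf L_f^{\psi^\alpha,\alpha}$ is a positive operator ($f\ge0$, $\phi=\psi$), its singular values coincide with its eigenvalues. So I would derive the corollary from Theorem~\ref{thmc} by the standard approximation of the indicator function $\chi_{(\delta,\infty)}$ by continuous functions. First I record a uniform bound: $0\le \mathbf L_f^{\psi^\alpha,\alpha}\le \|f\|_\infty I$. This follows from the resolution of identity
$$(\alpha+1)\int_0^{2\pi}\frac{d\theta}{2\pi}\int_{\D}|\langle g,U^\alpha_z\psi^\alpha_\theta\rangle|^2\,d\lambda(z)=\|g\|^2,$$
which holds for unit vectors by the usual Moyal/orthogonality argument (the same one implicit in $\mathbf L_1^{\phi,\psi,\alpha}$ being the identity). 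Hence the spectrum lies in $[0,\|f\|_\infty]$, and Theorem~\ref{thmc} applies to any continuous $h$ on that interval.

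Next, I write $N_\alpha(\delta)=\#\{i:\lambda_i>\delta\}={\rm tr}\,\chi_{(\delta,\infty)}(\mathbf L)$. I set $g(x)=x h(x)$ and choose continuous $h$ so that $g$ sandwiches $\chi_{(\delta,\|f\|_\infty]}$. For the upper bound, take $g_\epsilon$ continuous with $g_\epsilon(0)=0$, $g_\epsilon\ge\chi_{(\delta,\infty)}$ on $[0,\|f\|_\infty]$, and $g_\epsilon=0$ on $[0,\delta-\epsilon]$. For the lower bound, take $g_\epsilon\le\chi_{(\delta,\infty)}$, equal to $1$ on $[\delta+\epsilon,\|f\|_\infty]$, and $0$ on $[0,\delta]$. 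In both cases $h_\epsilon(x)=g_\epsilon(x)/x$ is continuous, because $g_\epsilon$ vanishes near $0$ when $\delta>0$. Since the functional calculus is monotone, Theorem~\ref{thmc} gives
$$\lambda(\{f\ge\delta+\epsilon\})\le\liminf_{\alpha\to\infty}\frac{N_\alpha(\delta)}{\alpha+1}\le\limsup_{\alpha\to\infty}\frac{N_\alpha(\delta)}{\alpha+1}\le\lambda(\{f>\delta-\epsilon\}).$$
All these sets have finite $\lambda$-measure by Chebyshev's inequality, since $f\in L^1(d\lambda)$. Letting $\epsilon\to0$, the left side increases to $\lambda(\{f>\delta\})$. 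The right side decreases to $\lambda(\{f\ge\delta\})$.

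The main obstacle is the gap between these limits, namely the level set $\lambda(\{f=\delta\})$. It vanishes exactly when $\delta$ is not an atom of the distribution of $f$, which holds for all but countably many $\delta$. As stated, the corollary presumably intends such $\delta$, or else an implicit hypothesis that $\lambda(\{f=\delta\})=0$. Failing that, I would at least prove it for every $\delta$ at which $t\mapsto\lambda(\{f>t\})$ is continuous and note the caveat. Finally, the endpoint case $\delta=\|f\|_\infty$ gives $0$ on both sides, provided $\lambda(\{f=\|f\|_\infty\})=0$, and is covered by the same argument.
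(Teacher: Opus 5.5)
Your argument is the one the paper has in mind, carried out correctly. The paper writes the count as ${\rm tr}\,(\mathbf L h_\delta(\mathbf L))$, with $\mathbf L=\mathbf L_f^{\psi^\alpha,\alpha}$ and $h_\delta(x)=\mathbbm 1_{(\delta,\|f\|_\infty]}(x)/x$, and then defers to the Feichtinger--Nowak sandwich argument. Your caveat is genuinely needed, because the statement as printed fails when $\lambda(\{f=\delta\})>0$. For example, take $\psi=1$, $0<a<b<1$ and $f=\delta\mathbbm 1_{\{|z|<a\}}+2\delta\mathbbm 1_{\{a\le|z|<b\}}$. The operator is then diagonal in $\{e_n^\alpha\}$, and its eigenvalue on $e_n^\alpha$ exceeds $\delta$ exactly when the $\mathrm{Beta}(n+1,\alpha+1)$ mass of $[a^2,b^2)$ exceeds that of $[b^2,1)$. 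This gives the limit $\lambda(\{f\ge\delta\})=b^2/(1-b^2)$ instead of $\lambda(\{f>\delta\})$. Your one slip is at $\delta=\|f\|_\infty$, where no proviso is needed: $\mathbf L_f^{\psi^\alpha,\alpha}\le\|f\|_\infty I$ makes the count identically zero, which matches $\lambda(\{f>\|f\|_\infty\})=0$.
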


\begin{cor}
Suppose $\psi$ is a unit vector in $A^2$, $\psi^\alpha=
V^\alpha\psi$, and $f$ is a non-negative function in
$L^1(\D,d\lambda)\cap L^{\infty}(\D)$. Then we have
$$\lim_{\alpha\to \infty}\|\mathbf L_f^{\psi^{\alpha},\alpha}
\|_{A_{\alpha}^2}=\|f\|_{\infty}.$$
\end{cor}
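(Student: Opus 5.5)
The goal is $\|\mathbf L_f^{\psi^\alpha,\alpha}\|\to\|f\|_\infty$, and I would prove it by two inequalities, deriving the lower bound from the eigenvalue-counting corollary above. Since $f\ge 0$, each $\mathbf L_f^{\psi^\alpha,\alpha}$ is a positive operator.

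\emph{Upper bound.} For unit vectors $g,h\in A^2_\alpha$, Cauchy--Schwarz in the defining sesquilinear form gives
$$|\langle \mathbf L_f^{\psi^\alpha,\alpha}g,h\rangle|\le \|f\|_\infty\,(\alpha+1)\int_0^{2\pi}\frac{d\theta}{2\pi}\ind|\langle g,U^\alpha_z\psi^\alpha_\theta\rangle|^2\,d\lambda(z)$$
to the power $1/2$, times the same expression with $h$ in place of $g$. I then need the resolution of identity
$$(\alpha+1)\int_0^{2\pi}\frac{d\theta}{2\pi}\ind|\langle g,U^\alpha_z\psi_\theta\rangle|^2\,d\lambda(z)=\|g\|^2\|\psi\|^2 .$$
This is the content of $\mathbf L_1^{\psi,\alpha}=\|\psi\|^2 I$, a Schur-orthogonality fact for the discrete-series representation of the M\"obius group (formal dimension $\alpha+1$), which the paper presumably establishes. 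If not, I would verify it directly. The rotation average diagonalizes $\psi$ in monomials, so it suffices to check it for $\psi=z^n$, using invariance of $d\lambda$ and the reproducing formula. Since $\|\psi^\alpha\|_{A^2_\alpha}=\|\psi\|_{A^2}=1$ because $V^\alpha$ is unitary, this yields $\|\mathbf L_f^{\psi^\alpha,\alpha}\|\le\|f\|_\infty$ for every $\alpha$.

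\emph{Lower bound.} Fix $0<\delta<\|f\|_\infty$. By definition of the essential supremum, $\lambda(\{f>\delta\})>0$. The preceding corollary gives
$$\#\{i:\lambda_i(\mathbf L_f^{\psi^\alpha,\alpha})>\delta\}/(\alpha+1)\to\lambda(\{f>\delta\})>0,$$
so for all large $\alpha$ at least one singular value exceeds $\delta$. Hence $\|\mathbf L_f^{\psi^\alpha,\alpha}\|=\lambda_1>\delta$ eventually, and so $\liminf_{\alpha\to\infty}\|\mathbf L_f^{\psi^\alpha,\alpha}\|\ge\delta$. Letting $\delta\uparrow\|f\|_\infty$ and combining with the upper bound proves the statement.

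The only delicate point is the identity $\mathbf L_1^{\psi,\alpha}=I$ for unit $\psi$ with the paper's normalization $(\alpha+1)\,d\lambda$. If I have to prove it from scratch, I would expand $g$ and $\psi$ in monomials, integrate out $\theta$ to kill the cross terms, and reduce to the case $\psi=z^n$. In that case $|\langle g,U^\alpha_z z^n\rangle|^2$ can be computed via $U^\alpha_z$ as a change of variables in the $\lambda$-integral.
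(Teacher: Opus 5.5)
Your proof is correct and follows the paper's argument: the upper bound $\|\mathbf L_f^{\psi^\alpha,\alpha}\|_{A^2_\alpha}\le\|f\|_\infty$ comes from Cauchy--Schwarz plus the orthogonality relation, which the paper proves as Theorem~\ref{3.4}, so you do not need to reprove it. The lower bound comes, exactly as you do it, from applying Corollary~\ref{5.8} with $\delta$ close to $\|f\|_\infty$. Your explicit remark that $\lambda(\{f>\delta\})>0$ for $\delta<\|f\|_\infty$ fills in a point the paper uses without stating it.
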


\section{The Fock space as a limit of weighted Bergman spaces}

The Fock space $F^2_\beta$ is an analytic function space on the complex
plane. In this section we will show that $F^2_\beta$ is the weak limit of
certain weighted Bergman spaces of the unit disc. Although this
has been known to experts in the area (for example, it was mentioned in
\cite{JPR} and it was given as an exercise in \cite{Zhu1}), details are
difficult to find in the literature. So we will give a full proof for
the following result.

\begin{thm}\label{2.1}
If $f\in L^1(\C,d\mu_{\beta})$, then for any $\sigma\geq 0$ we have
$$\lim_{r\to\infty}\ind f(rz)\,dA_{\beta r^2+\sigma}(z)
=\inc f(z)\,d\mu_{\beta}(z).$$
\end{thm}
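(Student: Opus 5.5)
Substitute $w=rz$, which maps $\D$ onto the disc $|w|<r$. With $\alpha=\beta r^2+\sigma$ we get
$$\ind f(rz)\,dA_{\alpha}(z)=\inc f(w)\,K_r(w)\,dA(w),\qquad
K_r(w)=\frac{\alpha+1}{\pi r^2}\Bigl(1-\frac{|w|^2}{r^2}\Bigr)^{\alpha}\mathbbm 1_{\{|w|<r\}}.$$
It then suffices to show that $K_r(w)\to \frac{\beta}{\pi}e^{-\beta|w|^2}$ pointwise, together with a domination strong enough to apply the dominated convergence theorem against $f\in L^1(\C,d\mu_\beta)$.

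\emph{Pointwise convergence.} Fix $w$. For $r>|w|$ we have $\frac{\alpha+1}{r^2}=\beta+\frac{\sigma+1}{r^2}\to\beta$. Moreover
$$\alpha\log\Bigl(1-\frac{|w|^2}{r^2}\Bigr)=-(\beta r^2+\sigma)\Bigl(\frac{|w|^2}{r^2}+O(r^{-4})\Bigr)\to-\beta|w|^2,$$
so $K_r(w)\to\frac\beta\pi e^{-\beta|w|^2}$.

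\emph{Domination.} This is the main step, because $f$ is only integrable against the Gaussian weight. The bound I would use is $1-x\le e^{-x}$, which gives
$$\Bigl(1-\frac{|w|^2}{r^2}\Bigr)^{\alpha}\le e^{-\alpha|w|^2/r^2}\le e^{-\beta|w|^2}$$
for $|w|<r$, since $\alpha/r^2=\beta+\sigma/r^2\ge\beta$ (here $\sigma\ge0$ is used). For $r\ge1$ the prefactor satisfies $\frac{\alpha+1}{\pi r^2}\le\frac{\beta+\sigma+1}{\pi}$. Hence
$$|f(w)|K_r(w)\le C\,|f(w)|e^{-\beta|w|^2},$$
which is integrable with respect to $dA$ because $f\in L^1(\C,d\mu_\beta)$.

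Dominated convergence then gives
$$\lim_{r\to\infty}\inc f\,K_r\,dA=\inc f(w)\,\frac\beta\pi e^{-\beta|w|^2}\,dA(w)=\inc f\,d\mu_\beta,$$
which proves Theorem~\ref{2.1}. The only delicate point is choosing the domination so that it uses exactly the Gaussian weight available in $L^1(\C,d\mu_\beta)$. The elementary inequality $1-x\le e^{-x}$ together with $\sigma\ge0$ handles this cleanly. If $\sigma$ were negative, the bound would instead give $e^{-(\beta-\epsilon)|w|^2}$, which is not integrable against a general $f\in L^1(\C,d\mu_\beta)$; this explains the restriction $\sigma\ge0$.
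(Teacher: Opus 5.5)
Your proof is correct and follows essentially the same route as the paper. Both arguments use the substitution $w=rz$, pointwise convergence of the rescaled kernel, domination by a constant times $e^{-\beta|w|^2}$ via $1-x\le e^{-x}$ (the paper's $(1-x)^{1/x}\le e^{-1}$), and then dominated convergence. One correction to your closing aside: the restriction $\sigma\ge0$ is not actually needed. For $\sigma<0$ and $r$ large enough that $\beta r^2+\sigma\ge0$, on $|w|<r$ you still have $(1-|w|^2/r^2)^{\beta r^2+\sigma}\le e^{-\beta|w|^2}e^{|\sigma||w|^2/r^2}\le e^{|\sigma|}e^{-\beta|w|^2}$, which is enough for the domination.
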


\begin{proof}
By a change of variables, we have
\begin{align}
\ind f(rz)\,dA_{\beta r^2+\sigma}(z)
&=(\beta r^2+\sigma+1)\ind f(rz)(1-|z|^2)^{\beta r^2+\sigma}
\frac{dA(z)}{\pi}\nonumber\\
&=\frac{(\beta r^2+\sigma+1)}{r^{2}}\int_{r\D}f(z)\left(1-\left|\frac{z}{r}
\right|^2\right)^{\beta r^2+\sigma}\frac{dA(z)}{\pi}\nonumber\\
&=\frac{(\beta r^2+\sigma+1)}{r^{2}\pi}\inc f(z)\psi_r(z)\,dA(z)\label{eq2.1}
\end{align}
for $r>0$, where
$$\psi_r(z)=\mathbbm{1}_{r\D}(z)\bigg(1-\frac{|z|^2}{r^2}
\bigg)^{\beta r^2+\sigma}$$
and $\mathbbm{1}_{r\D}(z)$ is the characteristic function of $r\D$. Clearly,
\begin{equation}\label{eq2.2}
\lim_{r\to\infty}\frac{(\beta r^2+\sigma+1)}{r^{2}\pi}=\frac{\beta}{\pi}.
\end{equation}
It is easy to see that $\log(1-x)\leq -x$ for $x\in(0,1)$. This implies
\begin{equation}\label{eq2.3}
(1-x)^{\frac{1}{x}}\leq e^{-1},\quad x\in (0,1).
\end{equation}
It follows from \eqref{eq2.3} that for $z\in\C$ we have
\begin{eqnarray*}
\psi_r(z)&=&\mathbbm {1}_{r\D}(z)\left(1-\left|\frac{z}{r}\right|^2
\right)^{\beta r^2+\sigma}\leq\mathbbm {1}_{r\D}(z)\left(1-\left|
\frac{z}{r}\right|^2\right)^{\beta r^2}\\
&=&\mathbbm {1}_{r\D}(z)\left[\left(1-\left|\frac{z}{r}\right|^2
\right)^{\left|\frac{r}{z}\right|^2}\right]^{\beta|z|^2}\leq
\mathbbm {1}_{r\D}(z) e^{-\beta|z|^2}\leq e^{-\beta|z|^2}.
\end{eqnarray*}
Since $f$ is in $L^1(\C,d\mu_{\beta})$ and $\psi_r(z)$ converges to
$e^{-\beta|z|^2}$ as $r$ goes to $\infty$, we deduce from
\eqref{eq2.1}, \eqref{eq2.2}, and the dominated convergence
theorem that
$$\lim_{r\to \infty}\ind f(rz)dA_{\beta r^2+\sigma}(z)=
\frac{\beta}{\pi}\inc f(z)e^{-\beta|z|^2}\,dA(z).$$
This completes the proof of the theorem.
\end{proof}

\begin{cor}\label{2.2}
Let $\beta>0$ and $\sigma\geq 0$. For any entire function
$f$ on $\C$ we have
$$\lim_{r\to\infty}\|f_r\|_{A_{\beta r^2+\sigma}^2}
=\|f\|_{F^2_{\beta}},$$
where $f_r(z)=f(rz)$.
\end{cor}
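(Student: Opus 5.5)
The plan is to reduce the statement to Theorem~\ref{2.1} applied to $|f|^2$, and to handle separately the case where $f\notin F^2_\beta$, in which the right-hand side is $+\infty$. By definition,
$$\|f_r\|^2_{A^2_{\beta r^2+\sigma}}=\ind |f(rz)|^2\,dA_{\beta r^2+\sigma}(z),$$
so the claim is exactly the conclusion of Theorem~\ref{2.1} for the function $|f|^2$, after taking square roots. Square roots are harmless here, since $t\mapsto\sqrt t$ is continuous on $[0,\infty]$.

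\emph{Case 1: $f\in F^2_\beta$.} Then $|f|^2\in L^1(\C,d\mu_\beta)$. Theorem~\ref{2.1} gives
$$\lim_{r\to\infty}\|f_r\|^2_{A^2_{\beta r^2+\sigma}}=\inc|f|^2\,d\mu_\beta=\|f\|^2_{F^2_\beta},$$
and the result follows.

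\emph{Case 2: $\|f\|_{F^2_\beta}=\infty$.} We must show $\|f_r\|_{A^2_{\beta r^2+\sigma}}\to\infty$. Fix $R>0$ and let $F_R=|f|^2\mathbbm{1}_{R\D}$. This function is bounded with compact support, because $f$ is continuous, so it lies in $L^1(\C,d\mu_\beta)$. Since the integrand is nonnegative,
$$\|f_r\|^2_{A^2_{\beta r^2+\sigma}}\ge\ind F_R(rz)\,dA_{\beta r^2+\sigma}(z).$$
Applying Theorem~\ref{2.1} to $F_R$ yields
$$\liminf_{r\to\infty}\|f_r\|^2_{A^2_{\beta r^2+\sigma}}\ge\int_{R\D}|f|^2\,d\mu_\beta .$$
Letting $R\to\infty$, monotone convergence makes the right side tend to $\|f\|^2_{F^2_\beta}=\infty$.

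An alternative for Case 2 is Fatou's lemma applied to the pointwise convergence $\psi_r(z)\to e^{-\beta|z|^2}$ from the proof of Theorem~\ref{2.1}. The only delicate point in the whole argument is this infinite case, where dominated convergence is unavailable, and the truncation argument resolves it.
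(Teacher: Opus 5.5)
Your proposal is correct and follows the paper's route: the paper states this corollary without proof, as an immediate consequence of Theorem~\ref{2.1} applied to $|f|^2$, which is exactly your Case~1. Your Case~2 (truncating to $|f|^2\mathbbm{1}_{R\D}$ and using monotone convergence, or Fatou) correctly handles $\|f\|_{F^2_\beta}=\infty$, a case that ``any entire function'' covers but the paper never addresses; the paper only applies the corollary to $f\in F^2_\beta$.
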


\section{Orthogonality relations in $A^2_{\alpha}$ and $F_{\beta}^2$}

Recall that the orthogonality relation in time-frequency analysis
refers to the following identity,
$$\int_{\R^2}\langle g,\,M_{\omega}T_x\phi\rangle
\langle M_{\omega}T_x\psi,\,h\rangle\,dxd\omega=
\langle g,\,h\rangle\langle \psi,\,\phi \rangle,$$
where $g,h,\phi,\psi\in L^2(\R)$ and the inner product is that
of $L^2(\R)$. This is often referred to as Moyal's identity;
see \cite[Theorem 3.2.1]{Gr}.The associated localization operators
on $L^2(\R)$ are operators $L^{\phi,\psi}_f$ on $L^2(\R)$ defined by
$$\langle L^{\phi,\psi}_fg,h\rangle=\int_{\R^2}f(x,\omega)
\langle g, M_\omega T_x\phi\rangle\langle M_\omega T_x\psi,
h\rangle\,dx\,d\omega,$$
where $\phi$ and $\psi$ are window functions in $L^2(\R)$, $f$ is a
symbol function on $\R^2$, and the inner product is taken in $L^2(\R)$.

In this section we will extend the orthogonality relation and the
associated localization operators above to the settings of weighted
Bergman spaces and Fock spaces.

For the Fock space $F^2_\beta$, $\beta>0$, we begin with the classical
Bargmann transform, denoted by ${\mathcal B}_{\beta}$, which is a
certain integral operator mapping $L^2(\R)$ unitarily onto $F_{\beta}^2$.
The Bargmann transform allows us to transfer functions in $L^2(\R)$
to functions in $F^2_\beta$, and to transfer operators on $L^2(\R)$
to operators on $F^2_\beta$. For example, it is well known that
\begin{equation}\label{eq3.1}
{\mathcal B}_{\beta}M_{\omega}T_x{\mathcal B}_{\beta}^{-1}
=e^{\beta ix\omega}W^{\beta}_{x-\frac{\pi \omega i}{\beta}}.
\end{equation}
See \cite{Zhu1} for more information about the Fock space,
Weyl unitary operators, and the Bargmann transform.

By \eqref{eq3.1}, the Bargmann transform takes the orthogonality
relation in $L^2(\R)$ to $F^2_\beta$ as follows:{\small
\begin{align*}
\int_{\R^2}\langle {\mathcal B}_\beta g, W_{x-i(\pi\omega/\beta)}
{\mathcal B}_\beta\phi\rangle\langle
W_{x-i(\pi\omega/\beta)}{\mathcal B}_\beta\psi,
{\mathcal B}_\beta h\rangle\,dx\,d\omega
=\langle {\mathcal B}_\beta g, {\mathcal B}_\beta h
\rangle\langle {\mathcal B}_\beta\phi,
{\mathcal B}_\beta\psi\rangle,
\end{align*}
}where $\langle\ ,\ \rangle$ is the inner product in $F^2_\beta$.
Replacing ${\mathcal B}_\beta g,{\mathcal B}_\beta h,
{\mathcal B}_\beta\phi,{\mathcal B}_\beta\psi$ by $g$,$h$,
$\phi$,$\psi\in F^2_\beta$, and writing $z=x-i(\pi\omega/\beta)$,
we obtain the following Fock space analog of Moyal's identity.

\begin{thm}\label{3.1}
If $\phi,\psi,g,h\in F_{\beta}^2$, then we have
$$\frac{\beta}{\pi}\inc\langle g,\,W^{\beta}_z\phi
\rangle\langle W^{\beta}_z\psi,\,h\rangle\,dA(z)=
\langle g,\,h\rangle\langle\psi,\,\phi\rangle.$$
\end{thm}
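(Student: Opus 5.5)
The identity can be derived as the paper suggests, by transporting Moyal's identity through the Bargmann transform. Intrinsically, though, the cleanest route is a direct computation in $F^2_\beta$ via the reproducing kernel, using only the definition of $W^\beta_z$. I would do it directly and keep the Bargmann route as a cross-check.

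First, compute $\langle g, W^\beta_z\phi\rangle$. Since $W^\beta_z$ is unitary with $(W^\beta_z)^{*}=W^\beta_{-z}$, we have $\langle g,W^\beta_z\phi\rangle=\langle W^\beta_{-z}g,\phi\rangle$. By linearity and density, it suffices to prove the identity when $\phi$ and $\psi$ are reproducing kernels, $\phi=K_a$ and $\psi=K_b$ with $K_a(\zeta)=e^{\beta\zeta\overline a}$. This reduction is valid provided we show that both sides are bounded sesquilinear in $(\phi,\psi)$. The right side obviously is. For the left side, Cauchy--Schwarz in $L^2(dA)$ reduces boundedness to the diagonal case $\frac\beta\pi\int|\langle g,W_z\phi\rangle|^2dA(z)=\|g\|^2\|\phi\|^2$. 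So I would first establish this diagonal identity, which is the ``isometry'' statement, and then polarize.

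Next, evaluate the transform on kernels. A direct computation gives $W^\beta_zK_a=e^{\beta\overline z a-\beta|z|^2/2}K_{a+z}$ up to a unimodular factor; I would verify this from the formula for $W^\beta_z$. Hence $\langle g,W^\beta_zK_a\rangle=c(z,a)\,g(a+z)$ with $|c(z,a)|=e^{-\beta|z|^2/2+\beta\mathrm{Re}(\overline a z)}$ (after tracking phases). Then
\[
\frac\beta\pi\int_{\C}\langle g,W_zK_a\rangle\langle W_zK_b,h\rangle\,dA(z)=\frac\beta\pi\int_{\C} g(a+z)\overline{h(b+z)}\,\overline{c(z,a)}\,c(z,b)\,dA(z).
\]
After the substitution $w=z+a$, the weight should combine into $e^{-\beta|w|^2}$ times an expression of the form $e^{\beta(\cdots)}$ that is anti-analytic in $w$. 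We recognize this as $e^{\beta b\overline a}\cdot\overline{k(w)}$ for a suitable kernel-type function, so the integral becomes $\langle g, \tilde h\rangle$ with $\tilde h=h$ shifted. The expected outcome is $\langle g,h\rangle\,e^{\beta b\overline a}=\langle g,h\rangle\langle K_b,K_a\rangle$, which is exactly the right-hand side.

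The main obstacle is the bookkeeping of phases and exponential factors in the kernel computation, since a sign error in $\mathrm{Im}(\overline a z)$ terms would break the collapse to a single Gaussian. If this becomes messy, I would instead use the Bargmann route. In that route the only thing to check is that the change of variables $z=x-i\pi\omega/\beta$ has Jacobian $\pi/\beta$, which produces the factor $\beta/\pi$, and that the unimodular factor $e^{\beta ix\omega}$ in \eqref{eq3.1} cancels between the two inner products. Combined with Moyal's identity, this gives the theorem immediately for all $\phi,\psi,g,h$.
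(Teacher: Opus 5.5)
The Bargmann-transform argument you keep as a fallback is exactly the paper's proof: Moyal's identity, \eqref{eq3.1}, cancellation of the phase $e^{\beta ix\omega}$, and $dx\,d\omega=\frac{\beta}{\pi}\,dA(z)$ under $z=x-i\pi\omega/\beta$. It is complete. The direct route you say you would lead with, however, has a genuine gap at its central step.

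First, the kernel formula is wrong. In fact $W^\beta_zK_a=e^{-\beta\bar a z-\beta|z|^2/2}K_{a+z}$, where $K_c(w)=e^{\beta\bar c w}$. So $|c(z,a)|=e^{-\beta|z|^2/2-\beta\,\mathrm{Re}(\bar a z)}$, and your factor differs from the true one by $e^{2\beta\mathrm{Re}(\bar a z)}$, which is not unimodular. With the correct factor, your substitution $w=z+a$ gives
\[
\frac{\beta}{\pi}\int_{\C}\langle g,W^\beta_zK_a\rangle\langle W^\beta_zK_b,h\rangle\,dA(z)
=e^{\beta a\bar b}\int_{\C}g(w)\,K_{a-b}(w)\,\overline{h(w-a+b)}\,d\mu_\beta(w).
\]
The leftover factor $K_{a-b}(w)=e^{\beta(\bar a-\bar b)w}$ is analytic, not anti-analytic, so it attaches to $g$, while $h$ is evaluated at a translated point. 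This equals $e^{\beta a\bar b}\langle g,h\rangle$ (note $\langle K_b,K_a\rangle=e^{\beta a\bar b}$) only because multiplication by $K_c$ and the translation $h\mapsto h(\cdot+c)$ are formally adjoint on $F^2_\beta$. Equivalently, $\frac{d}{dw}$ and $\beta w$ are formally adjoint there. Both operators are unbounded, and proving this identity rigorously essentially means testing it on kernels and extending by continuity. So this is the real content of the proof, not phase bookkeeping. The integrand collapses to a single Gaussian only when $a=b$. That case alone does not allow polarization in the windows, since polarization needs windows such as $K_a+K_b$. The paper's direct remark avoids this by taking $g,h$ to be kernels too. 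Everything then reduces to $\int_{\C}e^{\beta(\bar\zeta-\bar z_2)z}e^{\beta(\xi-z_1)\bar z}\,d\mu_\beta(z)$, which the reproducing property evaluates.

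Second, your density step is circular. The diagonal identity $\frac{\beta}{\pi}\int_{\C}|\langle g,W^\beta_z\phi\rangle|^2\,dA(z)=\|g\|^2\|\phi\|^2$ for arbitrary $g,\phi$ already yields the theorem by polarizing in $(g,h)$ and then in $(\phi,\psi)$. So ``first establish this diagonal identity'' reduces the theorem to itself and makes the kernel computation superfluous. What a kernel computation actually gives is the identity for $g,h,\phi,\psi$ in $\mathcal D_{F_{\beta}^2}$. To pass to all of $F^2_\beta$, take $g_n\to g$ and $\phi_n\to\phi$ with $g_n,\phi_n\in\mathcal D_{F_{\beta}^2}$. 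Then $\langle g_n,W^\beta_z\phi_n\rangle\to\langle g,W^\beta_z\phi\rangle$ for every $z$, so Fatou's lemma gives $\frac{\beta}{\pi}\int_{\C}|\langle g,W^\beta_z\phi\rangle|^2\,dA(z)\le\|g\|^2\|\phi\|^2$. Combined with Cauchy--Schwarz, this bounds the left side by $\|g\|\|h\|\|\phi\|\|\psi\|$. That makes the left side continuous in all four arguments, so the identity extends from the dense subspace.
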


Note that a direct proof of Theorem~\ref{3.1} without using the
Bargmann transform is possible. In fact, for $z\in\C$, if we write
$K_{z}^{\beta}(\zeta)=e^{\beta\overline{z}\zeta}$
for the reproducing kernel of $F_{\beta}^2$ at $z$, then
an easy computation shows that
$$\langle K^{\beta}_{\zeta},\,W^{\beta}_zK^{\beta}_{z_1}
\rangle=\overline{(W^{\beta}_zK^{\beta}_{z_1})(\zeta)}
=e^{-\frac{\beta}{2}|z|^2}e^{-\beta z_1\bar z}
e^{\beta(z_1+z)\bar{\zeta}},$$
and
$$\langle W^{\beta}_zK^{\beta}_{z_2},\,K^{\beta}_{\xi}
\rangle=(W^{\beta}_zK^{\beta}_{z_2})(\xi)=
e^{-\frac{\beta}{2}|z|^2}e^{-\beta \bar z_2 z}
e^{\beta(\bar z_2+\bar z){\xi}}.$$
It follows that
\begin{align*}
\frac{\beta}{\pi}&\int_{\C}\langle K^{\beta}_{\zeta},
\,W^{\beta}_zK^{\beta}_{z_1}\rangle
\langle W^{\beta}_zK^{\beta}_{z_2},\,K^{\beta}_{\xi}
\rangle\,dA(z)\\
&=e^{\beta z_1\bar{\zeta}+\beta \bar z_2\xi}
\inc e^{\beta(\bar \zeta-\bar z_2)z}e^{\beta(\xi-z_1)\bar z}
\,d\mu_{\beta}(z)\\
&=e^{\beta\bar \zeta\xi+\beta\bar z_2z_1}=
\langle K^{\beta}_{\zeta},\,K^{\beta}_{\xi}
\rangle\langle K^{\beta}_{z_2},\,K^{\beta}_{z_1}\rangle.
\end{align*}
As a consequence, the equality in Theorem \ref{3.1} holds for
all $\phi,\psi,g,h\in \mathcal D_{F_{\beta}^2}$, where
$$\mathcal D_{F_{\beta}^2}=
{\rm span}\{K^{\beta}_z:z\in\C\},$$
which is dense in $F^2_\beta$.

Similar to the orthogonality relation, we can also use the
Bargmann transform to show that the localization operator
$L^{\phi,\psi}_f$ on $L^2(\R)$ is unitarily equivalent to the
operator $L$ on $F^2_\beta$ defined via the sesquilinear form
\begin{equation}\label{eq?}
\langle Lg,\,h\rangle=\frac{\beta}{\pi}\inc f(z)\langle g,
\,W^{\beta}_z\phi\rangle\langle W^{\beta}_z\psi,\,h
\rangle\,dA(z),\quad g,h\in F_{\beta}^2.
\end{equation}
Here we abuse notion, use $g,h,\phi,\psi\in F^2_\beta$ for
the Bargmann transforms of the original functions
$g,h,\phi,\psi\in L^2(\R)$, and use $f(z)$ for the
orginal $f(x,-\beta\omega/\pi)$.

This operator $L$ is somewhat inconsistent with the localization
operators on $L^2(\R)$. More specifically, the time-frequency
localization operator $L^{\phi,\psi}_f$ is defined for a symbol
function $f$ on $\R\times\R$, instead of $\R$ which is the
underlying measure space for the Hilbert space $L^2(\R)$. But
the operator $L$ above is based on a symbol function $f$ on $\C$,
which is the underlying measure space for the Hilbert space
$F^2_\beta$. To rectify this discrepancy and in order to achieve
consistency with the Bergman space setting, we note that the
rotation-invariance of the Gaussian measure implies that the
orthogonality relation in Theorem~\ref{3.1} can be rewritten as
$$\frac{\beta}{\pi}\inc\langle g,\,W^{\beta}_z\phi_\theta
\rangle\langle W^{\beta}_z\psi_\theta,\,h\rangle\,dA(z)=
\langle g,\,h\rangle\langle\psi_\theta,\,\phi_\theta\rangle
=\langle g,\,h\rangle\langle\psi,\,\phi\rangle,$$
where $\phi_\theta(\zeta)=\phi(e^{i\theta}\zeta)$ with $\theta
\in[0,2\pi]$ and the inner product is that of $F^2_\beta$.
Consequently,
$$\frac{\beta}{\pi}\int_0^{2\pi}\frac{d\theta}{2\pi}
\inc\langle g,\,W^{\beta}_z\phi_\theta
\rangle\langle W^{\beta}_z\psi_\theta,\,h\rangle\,dA(z)
=\langle g,\,h\rangle\langle\psi,\,\phi\rangle.$$
This motivates our Definition~\ref{Fock localization} for
localization operators ${\mathbb L}^{\phi,\psi}_f$ on
$F^2_\beta$, namely,
$$\langle \mathbb L_f^{\phi,\psi,\beta}g,h\rangle=
\frac\beta\pi\int_0^{2\pi}\frac{d\theta}{2\pi}
\inc f(e^{i\theta},z)\langle g, W^\beta_z\phi_\theta
\rangle\langle W^\beta_z\psi_\theta,h\rangle\,dA(z),$$
where the inner product is that of $F^2_\alpha$ and the
symbol function $f$ is defined on $\T\times\C$, instead
of $\C$. The operator ${\mathbb L}^{\phi,\psi}_f$ above
appears more aligned with the time-frequency localization
operator $L^{\phi,\psi}_f$ than the operator $L$ in \eqref{eq?}.

Next we turn our attention to weighted Bergman spaces. Let
$\aut$ denote the M\"obius group of the unit disc, consisting
of all bijective analytic maps of $\D$. Recall that every
$\varphi\in\aut$ can be written in the form
$\varphi(z)=e^{i\theta}\varphi_a(z)$, where
$e^{i\theta}\in\T$, $a\in\D$, and
$$\varphi_a(z)=\frac{z-a}{1-\overline az},\qquad z\in\D.$$
It is easy to see that $e^{i\theta}$ and $a$ are uniquely
determined by $\varphi$. So we can identify $\aut$
topologically with $\T\times\D$. We will first clarify what
operation on $\T\times\D$ corresponds to composition
(the group operation) on $\aut$. For convenience, we write
$\varphi_{e^{i\theta},a}(z)=e^{i\theta}\varphi_a(z)$ for
$(e^{i\theta},z)\in \T\times \D$.

\begin{lem}\label{3.2}
For $e^{i\theta},e^{i\eta}\in\T$ and $a,b\in\D$, we have
$$\varphi_{e^{i\theta},a}\circ\varphi_{e^{i\eta},b}(\zeta)
=e^{i(\theta+\eta)}\frac{1+e^{-i\eta}a\bar b}
{1+e^{i\eta}\bar ab}\varphi_{\varphi_{-b}(e^{-i\eta}a)}(\zeta),
\qquad \zeta\in\D. $$
\end{lem}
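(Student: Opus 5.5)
The plan is a direct computation: first absorb the rotation $e^{i\eta}$ into the M\"obius map $\varphi_a$, and then compose two maps of the form $\varphi_c$ explicitly.

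\emph{Step 1 (absorb the inner rotation).} By definition,
$$\varphi_{e^{i\theta},a}\circ\varphi_{e^{i\eta},b}(\zeta)
=e^{i\theta}\,\frac{e^{i\eta}\varphi_b(\zeta)-a}{1-\bar a e^{i\eta}\varphi_b(\zeta)}.$$
Pulling $e^{i\eta}$ out of the numerator gives
$$e^{i(\theta+\eta)}\,\frac{\varphi_b(\zeta)-e^{-i\eta}a}{1-\overline{e^{-i\eta}a}\,\varphi_b(\zeta)}.$$
Setting $c=e^{-i\eta}a\in\D$, this becomes
$$\varphi_{e^{i\theta},a}\circ\varphi_{e^{i\eta},b}(\zeta)=e^{i(\theta+\eta)}\,\varphi_c(\varphi_b(\zeta)).$$
So it suffices to prove
$$\varphi_c\circ\varphi_b(\zeta)=\frac{1+c\bar b}{1+\bar c b}\,\varphi_{\varphi_{-b}(c)}(\zeta).$$

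\emph{Step 2 (compose two M\"obius maps).} Substitute $\varphi_b(\zeta)=(\zeta-b)/(1-\bar b\zeta)$ into $\varphi_c$ and clear the factor $1-\bar b\zeta$. This turns $\varphi_c\circ\varphi_b(\zeta)$ into a quotient of two linear expressions in $\zeta$.

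The numerator is
$$(\zeta-b)-c(1-\bar b\zeta)=(1+c\bar b)\,\zeta-(b+c).$$
The denominator is
$$(1-\bar b\zeta)-\bar c(\zeta-b)=(1+\bar c b)-(\bar b+\bar c)\,\zeta.$$

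Now factor $1+c\bar b$ out of the numerator and $1+\bar c b$ out of the denominator. Both factors are nonzero because $|c\bar b|<1$. With
$$d=\frac{b+c}{1+\bar b c},$$
we have $\bar d=(\bar b+\bar c)/(1+\bar c b)$. Hence
$$\varphi_c\circ\varphi_b(\zeta)=\frac{1+c\bar b}{1+\bar c b}\cdot\frac{\zeta-d}{1-\bar d\zeta}.$$
Moreover
$$\varphi_{-b}(c)=\frac{c+b}{1+\bar b c}=d,$$
so the second factor is exactly $\varphi_{\varphi_{-b}(c)}(\zeta)$.

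\emph{Step 3 (substitute back).} Put $c=e^{-i\eta}a$. Then $1+c\bar b=1+e^{-i\eta}a\bar b$ and $1+\bar c b=1+e^{i\eta}\bar a b$, which together with Step 1 gives the stated formula.

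No step is a genuine obstacle. The only point requiring care is the conjugation bookkeeping in Step 2: one must check that the coefficient of $\zeta$ in the denominator, after dividing by $1+\bar c b$, is precisely $\overline{d}$ for the same $d$ that appears in the numerator. Otherwise the quotient would not be of the form $\varphi_d$.
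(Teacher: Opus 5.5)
Your proof is correct and is essentially the paper's argument. The paper also substitutes $\varphi_{e^{i\eta},b}$ into $\varphi_{e^{i\theta},a}$, clears the factor $1-\bar b\zeta$, collects the terms linear in $\zeta$, and factors out $1+e^{-i\eta}a\bar b$ and $1+e^{i\eta}\bar a b$; your substitution $c=e^{-i\eta}a$ just packages the same algebra as $\varphi_c\circ\varphi_b$, and you make explicit that these factors are nonzero, which the paper leaves tacit.
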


\begin{proof}
This follows from a direct calculation. In fact,
\begin{align*}
\varphi_{e^{i\theta},a}\circ\varphi_{e^{i\eta},b}(\zeta)
&=e^{i\theta}\frac{\varphi_{e^{i\eta},b}(\zeta)-a}
{1-\bar a\varphi_{e^{i\eta},b}(\zeta)}=e^{i\theta}
\frac{e^{i\eta}(\zeta-b)-a(1-\bar b\zeta)}
{(1-\bar b\zeta)-\bar a e^{i\eta}(\zeta-b)}\\
&=e^{i(\theta+\eta)}\frac{(1+e^{-i\eta}a\bar b)
\zeta-(e^{-i\eta}a+b)}{(1+e^{i\eta}\bar a b)-
(e^{i\eta}\bar a+\bar b)\zeta}\\
&=e^{i(\theta+\eta)}\frac{1+e^{-i\eta}a\bar b}
{1+e^{i\eta}\bar a b}\frac{\zeta-\frac{e^{-i\eta}a+b}
{1+e^{-i\eta}a\bar b}}{1-\frac{e^{i\eta}\bar a+\bar b}
{1+e^{i\eta}\bar a b}\zeta}\\
&=e^{i(\theta+\eta)}\frac{1+e^{-i\eta}a\bar b}
{1+e^{i\eta}\bar ab}\varphi_{\varphi_{-b}
(e^{-i\eta}a)}(\zeta).\qquad\qquad\qquad\qquad\qedhere
\end{align*}
\end{proof}

As a consequence of Lemma \ref{3.2}, we can write
$\aut=\T\times\D$ with the group operation on
$\T\times\D$ given by
\begin{equation}\label{eq3.2}
(e^{i\theta},a)\cdot(e^{i\eta},b)=\left(e^{i(\theta+\eta)}
\frac{1+e^{-i\eta}a\bar b}{1+e^{i\eta}\bar ab},
\varphi_{-b}(e^{-i\eta}a)\right).
\end{equation}
The group unit is $(1,0)$ and the inverse of $(e^{i\theta},a)$
is $(e^{-i\theta}, -e^{i\theta}a)$.

For $\alpha>-1$ we let $K_z^{\alpha}$ denote the reproducing
kernel of $A_{\alpha}^2$, that is,
$$K_z^{\alpha}(w)=\frac{1}{(1-\bar zw)^{2+\alpha}}.$$
The normalized reproducing kernel at $z$ is denoted by
$k_z^{\alpha}$, namely,
$$k_z^{\alpha}(w)=\frac{(1-|z|^2)^{1+\frac{\alpha}{2}}}
{(1-\bar zw)^{2+\alpha}}.$$
For $\varphi\in\aut$ with $\varphi(z)=e^{i\theta}\varphi_a(z)$,
we will use $U^\alpha_\varphi=U^\alpha_{e^{i\theta},a}$ to
denote the unitary operator on $A^2_\alpha$ defined by
$$U^\alpha_{e^{i\theta},a}f(z)=f\circ\varphi(z)
\left[\varphi'(z)\right]^{(2+\alpha)/2}.$$
A simple calculation shows
\begin{equation*}
(U_{e^{i\theta},a}^{\alpha}1)(z)=
[\varphi_{e^{i\theta},a}'(z)]^{1+\frac{\alpha}{2}}
=e^{i(1+\frac{\alpha}{2})\theta}k_a^{\alpha}(z).
\end{equation*}
It follows that
\begin{equation}\label{eq3.3}
U^\alpha_{e^{i\theta},a}f(z)=e^{i(1+\frac\alpha2)\theta}
f(e^{i\theta}\varphi_a(z))k^\alpha_a(z)=
e^{i(1+\frac\alpha2)\theta}U^\alpha_af_\theta(z),
\end{equation}
where $f_\theta(z)=f(e^{i\theta}z)$, and
\begin{equation}\label{eq3.4}
U^{\alpha}_{e^{i\eta},b}U^{\alpha}_{e^{i\theta},a}
=U^{\alpha}_{(e^{i\theta},a)\cdot(e^{i\eta},b)}
=U^{\alpha}_{\left(e^{i(\theta+\eta)}
\frac{1+e^{-i\eta}a\bar b}{1+e^{i\eta}\bar ab},
\varphi_{-b}(e^{-i\eta}a)\right)}.
\end{equation}
Moreover, the Haar measure
of $\aut=\T\times\D$ is given by
$$dH(e^{i\theta},z)=\frac{d\theta}{2\pi}\,d\lambda(z),$$
where
$$d\lambda(z)=\frac{dA(z)}{\pi(1-|z|^2)^2}$$
is the M\"obius invariant area measure on $\D$.

In the remainder of this section, the inner product
$\langle\ ,\ \rangle$ always stands for that of the weighted
Bergman space $A^2_\alpha$.

\begin{lem}\label{3.3}
For $z_1,z_2,\zeta,\xi\in \D$ and $\alpha>-1$, we have
\begin{align}\label{eq3.5}
(\alpha+1)&\int_{0}^{2\pi}\,\frac{d\theta}{2\pi}\ind
\langle K_{\zeta}^{\alpha},\,U_{e^{i\theta},z}^{\alpha}
K^{\alpha}_{z_1}\rangle\langle U^{\alpha}_{e^{i\theta},z}
K^{\alpha}_{z_2},\,K^{\alpha}_{\xi}\rangle
\,d\lambda(z)\nonumber\\[8pt]
&=\langle K^{\alpha}_{\zeta},\,K^{\alpha}_{\xi}\rangle
\langle K^{\alpha}_{z_2},\,K^{\alpha}_{z_1}\rangle.
\end{align}
\end{lem}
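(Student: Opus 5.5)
We will reduce \eqref{eq3.5} to a family of simpler identities, one per monomial $p_n(w)=w^n$, by expanding the two kernels $K^\alpha_{z_1},K^\alpha_{z_2}$ in power series. The rotation parameter $\theta$ will be used to kill the cross terms.

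\textbf{Step 1: expand in $\theta$.} Write $c_n=\Gamma(n+2+\alpha)/(n!\,\Gamma(2+\alpha))$, so that $K^\alpha_{z_1}(w)=\sum_n c_n\bar z_1^nw^n$ and $\|p_n\|^2_{A^2_\alpha}=1/c_n$. By \eqref{eq3.3},
$$U^\alpha_{e^{i\theta},z}K^\alpha_{z_1}=e^{i(1+\frac\alpha2)\theta}\sum_n c_n\bar z_1^n e^{in\theta}\,U^\alpha_zp_n.$$
Since $\langle K^\alpha_\zeta,F\rangle=\overline{F(\zeta)}$ and $\langle F,K^\alpha_\xi\rangle=F(\xi)$, the integrand in \eqref{eq3.5} is a product of two such series. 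The unimodular prefactors $e^{\pm i(1+\alpha/2)\theta}$ cancel. Averaging over $\theta$ keeps only the diagonal terms, and the left side of \eqref{eq3.5} becomes
$$\sum_{n\ge0}c_n^2(z_1\bar z_2)^n\,(\alpha+1)\ind \overline{U^\alpha_zp_n(\zeta)}\,U^\alpha_zp_n(\xi)\,d\lambda(z).$$
To justify interchanging sums and integrals, we use $|\varphi_z|\le1$ and $|z_1z_2|<1$. The remaining factor $|k^\alpha_z(\zeta)k^\alpha_z(\xi)|(1-|z|^2)^{-2}$ is, for fixed $\zeta,\xi$, comparable to $(1-|z|^2)^\alpha$, which is integrable since $\alpha>-1$. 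Hence $\sum_n c_n^2|z_1z_2|^n$ times an integrable function dominates everything.

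\textbf{Step 2: the key identity for each $n$.} We claim that
$$(\alpha+1)\ind \overline{U^\alpha_zp_n(\zeta)}\,U^\alpha_zp_n(\xi)\,d\lambda(z)=\|p_n\|^2K^\alpha_\zeta(\xi)=\frac{1}{c_n}\frac1{(1-\bar\zeta\xi)^{2+\alpha}}.$$
Granting this, the sum in Step 1 equals $K^\alpha_\zeta(\xi)\sum_nc_n(z_1\bar z_2)^n=\langle K^\alpha_\zeta,K^\alpha_\xi\rangle\langle K^\alpha_{z_2},K^\alpha_{z_1}\rangle$, which is \eqref{eq3.5}.

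To prove the claim, call the left side $T_n(\xi,\zeta)$. It is the integral kernel of the operator $T_n=(\alpha+1)\ind U^\alpha_z(p_n\otimes p_n)(U^\alpha_z)^*\,d\lambda(z)$. We will check that $T_n$ commutes with every $U^\alpha_\varphi$, $\varphi\in\aut$, using the following ingredients:
\begin{itemize}
\item the group law \eqref{eq3.4};
\item the Möbius invariance of $d\lambda$;
\item the fact that $U^\alpha_{e^{i\eta},0}p_n$ is a unimodular multiple of $p_n$, so the rotational parts of the products $U^\alpha_\varphi U^\alpha_z$ drop out of $p_n\otimes p_n$.
\end{itemize}
Consequently the function $(1-|\xi|^2)^{2+\alpha}T_n(\xi,\xi)$ is Möbius invariant, hence constant. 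Since $T_n(\xi,\zeta)$ is holomorphic in $\xi$ and antiholomorphic in $\zeta$, polarization forces $T_n(\xi,\zeta)=C\,(1-\bar\zeta\xi)^{-(2+\alpha)}$. Alternatively, one can invoke irreducibility to conclude $T_n=C\cdot I$.

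\textbf{Step 3: the constant.} We evaluate at $\xi=\zeta=0$. Since $U^\alpha_zp_n(0)=(-z)^n(1-|z|^2)^{1+\alpha/2}$,
$$C=(\alpha+1)\ind|z|^{2n}(1-|z|^2)^{2+\alpha}\,d\lambda(z)=\ind|z|^{2n}\,dA_\alpha(z)=\|p_n\|^2.$$

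The main obstacle is Step 2, namely verifying carefully that the rotational phases produced by \eqref{eq3.4} cancel in $U^\alpha_z(p_n\otimes p_n)(U^\alpha_z)^*$, which is what gives the invariance of $T_n$. If this becomes messy, the fallback is to compute $T_n(\xi,\xi)$ directly: substitute $w=\varphi_\xi(z)$ in the integral and use $|k^\alpha_z(\xi)|^2=(1-|\varphi_\xi(z)|^2)^{2+\alpha}/(1-|\xi|^2)^{2+\alpha}$. This shows $(1-|\xi|^2)^{2+\alpha}T_n(\xi,\xi)$ equals its value at $\xi=0$, after which the same polarization argument applies.
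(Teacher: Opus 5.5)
Your proposal is correct, but it takes a different route from the paper. The paper polarizes in the \emph{window} parameters. Since both sides are analytic in $z_1$ and conjugate-analytic in $z_2$, it suffices to take $z_1=z_2=w$, i.e.\ normalized kernel windows $k^\alpha_w$. The group law \eqref{eq3.4} then gives $U^\alpha_{e^{i\theta},z}k^\alpha_w=c\,k^\alpha_{\varphi_{-z}(e^{-i\theta}w)}$ with $|c|=1$. A change of variables in $(\theta,z)$ moves the window to $k^\alpha_0=1$: first integrate out $\theta$ to absorb the $z$-dependent rotation factor $\frac{1+z\bar w}{1+\bar z w}$, then use M\"obius invariance of $d\lambda$. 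At $w=0$ the identity is just the reproducing formula $\int K^\alpha_\zeta\overline{K^\alpha_\xi}\,dA_\alpha=K^\alpha_\zeta(\xi)$.

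You instead decompose the windows into rotation modes, so the $\theta$-average kills the off-diagonal terms. You then polarize in the \emph{evaluation} points $(\xi,\zeta)$, using M\"obius invariance to move $\xi$ to $0$ for each monomial window $p_n$.

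What your route buys:
\begin{itemize}
\item In its ``fallback'' form, which is really the cleanest rigorous version of Step 2, it needs no group-law bookkeeping. It uses only $|\varphi_z(\xi)|=|\varphi_\xi(z)|$, the identity for $1-|\varphi_\xi(z)|^2$, and invariance of $d\lambda$.
\item It proves something finer. For a rotation-homogeneous window the relation holds with the $\D$-integral alone. This shows that the $\theta$-average in Definition~\ref{Bergman localization} is there precisely to decouple the different rotation modes of the window.
\end{itemize}
The price is the series expansion and its domination argument. Before averaging, the dominating series is the double series $\sum_{n,m}c_nc_m|z_1|^n|z_2|^m$ rather than $\sum_n c_n^2|z_1z_2|^n$, but it converges just as well. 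The paper's route is shorter once Lemma~\ref{3.2} is available, and it is a concrete instance of Schur orthogonality for the M\"obius group.

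One caution: do not use the ``irreducibility'' alternative at this stage. Schur's lemma needs $T_n$ to be a bounded operator, and that boundedness is essentially what Theorem~\ref{3.4} supplies, which is itself derived from this lemma. For the same reason, run the commutation argument on the sesquilinear form restricted to reproducing kernels, where every integral converges absolutely and $U^\alpha_\varphi$ maps kernels to unimodular multiples of kernels. Alternatively, use your direct diagonal computation.
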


\begin{proof}
For fixed $\zeta$ and $\xi$, we observe that both sides of
\eqref{eq3.5} are analytic in $z_1$ and conjugate analytic
in $z_2$. It follows from a well-known fact in the function
theory of several complex variables that we only need to
show that \eqref{eq3.5} is true for $z_1=z_2$. By
multiplying  $(1-|w|^2)^{2+\alpha}$ to both sides
of \eqref{eq3.5} with $w=z_1=z_2$, it is enough to show
\begin{equation}\label{eq3.6}
(\alpha+1)\int_{0}^{2\pi}\frac{d\theta}{2\pi}\ind
\overline{(U^{\alpha}_{e^{i\theta},z}k^{\alpha}_{w})
(\zeta)}(U^{\alpha}_{e^{i\theta},z}k^{\alpha}_{w})
(\xi) d\lambda(z)= K^{\alpha}_{\zeta}(\xi).
\end{equation}
From \eqref{eq3.3} and \eqref{eq3.4} we derive that
\begin{align}
U_{e^{i\theta},z}^{\alpha}k^{\alpha}_w
&=U^{\alpha}_{e^{i\theta},z}U^{\alpha}_{1,w}1=
U^{\alpha}_{e^{i\theta}\frac{1+we^{-i\theta}\bar z}
{1+\bar w e^{i\theta}z},\varphi_{-z}
(e^{-i\theta}w)}1\nonumber\\
&=\left[e^{i\theta}\frac{1+we^{-i\theta}\bar z}
{1+\bar w e^{i\theta}z}\right]^{1+\frac{\alpha}{2}}
k^{\alpha}_{\varphi_{-z}(e^{-i\theta}w)}.\label{eq3.7}
\end{align}
This implies that
\begin{equation}\label{eq3.8}
\left|\overline{(U^{\alpha}_{e^{i\theta},z}k^{\alpha}_{w})
(\zeta)}(U^{\alpha}_{e^{i\theta},z}k^{\alpha}_{w})(\xi)
\right|\leq\frac{(1+|w|)^{2+\alpha}}{(1-|w|)^{2+\alpha}}
\frac{(1-|z|^2)^{2+\alpha}}
{(1-|\zeta|)^{2+\alpha}(1-|\xi|)^{2+\alpha}}.
\end{equation}
Clearly, we have
\begin{equation}\label{eq3.9}
\varphi_{-z}(e^{-i\theta}w)=-e^{-i\theta}
\frac{1+e^{i\theta}z\bar w}{1+\overline{e^{i\theta}z} w}
\varphi_w(-e^{i\theta}z).
\end{equation}
Combining \eqref{eq3.7}, \eqref{eq3.8}, and \eqref{eq3.9},
and applying Fubini's theorem (twice) and a change of
variables (twice), we obtain
\begin{align*}
&\quad(\alpha+1)\int_{0}^{2\pi}\frac{d\theta}{2\pi}\ind
\overline{(U^{\alpha}_{e^{i\theta},z}k^{\alpha}_{w})(\zeta)}
(U^{\alpha}_{e^{i\theta},z}k^{\alpha}_{w})(\xi)\,d\lambda(z)\\
&=(\alpha+1)\int_{0}^{2\pi}\frac{d\theta}{2\pi}\ind
\overline{k^{\alpha}_{\varphi_{-z}(e^{-i\theta}w)}(\zeta)}
k^{\alpha}_{\varphi_{-z}(e^{-i\theta}w)}(\xi)\,d\lambda(z)\\
&=(\alpha+1)\int_{0}^{2\pi}\frac{d\theta}{2\pi}\ind
\overline{k^{\alpha}_{-e^{-i\theta}\frac{1+e^{i\theta}z\bar w}
{1+\overline{e^{i\theta}z} w}\varphi_w(-e^{i\theta}z)}(\zeta)}
k^{\alpha}_{-e^{-i\theta}\frac{1+e^{i\theta}z\bar w}
{1+\overline{e^{i\theta}z} w}\varphi_w(-e^{i\theta}z)}(\xi)\,
d\lambda(z)\\
&=(\alpha+1)\int_{0}^{2\pi}\frac{d\theta}{2\pi}
\ind\overline{k^{\alpha}_{-e^{-i\theta}
\frac{1+z\bar w}{1+\overline{z} w}\varphi_w(-z)}(\zeta)}
k^{\alpha}_{-e^{-i\theta}\frac{1+z\bar w}{1+\overline{z} w}
\varphi_w(-z)}(\xi)\,d\lambda(z)\\
&=(\alpha+1)\ind d\lambda(z)\int_{0}^{2\pi}
\overline{k^{\alpha}_{-e^{-i\theta}
\frac{1+z\bar w}{1+\overline{z} w}\varphi_w(-z)}(\zeta)}
k^{\alpha}_{-e^{-i\theta}\frac{1+z\bar w}{1+\overline{z} w}
\varphi_w(-z)}(\xi)\,\frac{d\theta}{2\pi}\\
&=(\alpha+1)\ind d\lambda(z)\int_{0}^{2\pi}
\overline{k^{\alpha}_{-e^{-i\theta}\varphi_w(-z)}(\zeta)}
k^{\alpha}_{-e^{-i\theta}\varphi_w(-z)}(\xi)
\,\frac{d\theta}{2\pi}\\
&=(\alpha+1)\int_{0}^{2\pi}\frac{d\theta}{2\pi}\ind
\overline{k^{\alpha}_{-e^{-i\theta}\varphi_w(-z)}(\zeta)}
k^{\alpha}_{-e^{-i\theta}\varphi_w(-z)}(\xi)\,d\lambda(z)\\
&=(\alpha+1)\int_{0}^{2\pi}\frac{d\theta}{2\pi}\ind
\overline{k^{\alpha}_{z}(\zeta)}k^{\alpha}_{z}(\xi)\,d\lambda(z)\\
&=\ind K^{\alpha}_{\zeta}(z) \overline{K^{\alpha}_{\xi}(z)}
\,dA_{\alpha}(z)\\
&=K_{\zeta}^{\alpha}(\xi).
\end{align*}
This proves the identity \eqref{eq3.6} and completes the
proof of the lemma.
\end{proof}

The following theorem can be derived from the Schur
orthogonality relations (see \cite[Theorem 14.3.3]{Di} for
example) if we consider the irreducible unitary representation
of $\aut$ defined by $\rho_{\alpha}(e^{i\theta},z)=
U_{e^{i\theta},z}^{\alpha}$. We provide a direct proof here
based on our calculations above for M\"obius maps and
Bergman kernels.

\begin{thm}\label{3.4}
For any $\phi,\psi,g,h\in A_{\alpha}^2$ with $\alpha>-1$ we have
$$(\alpha+1)\int_{0}^{2\pi}\frac{d\theta}{2\pi}\ind\langle g,\,
U^{\alpha}_{z}\phi_\theta\rangle\langle U^{\alpha}_{z}
\psi_\theta,\,h \rangle\,d\lambda(z)=\langle g,\,
h\rangle\langle \psi,\,\phi\rangle,$$
where $\phi_\theta(z)=\phi(e^{i\theta}z)$ and
$$U_z^{\alpha}f(\zeta)=f\circ \varphi_z(\zeta)\left[\varphi_z'(\zeta)\right]^{(2+\alpha)/2}.$$
\end{thm}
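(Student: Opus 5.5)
The plan is to deduce Theorem~\ref{3.4} from Lemma~\ref{3.3} in three steps: rewrite the integrand using \eqref{eq3.3}, verify the identity on spans of kernels, then extend by density.

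\textbf{Step 1 (rewriting the integrand).} By \eqref{eq3.3}, $U^\alpha_{e^{i\theta},z}f=e^{i(1+\frac\alpha2)\theta}U^\alpha_zf_\theta$. The unimodular factor appears once conjugated (in $\langle g,U\phi_\theta\rangle$) and once unconjugated (in $\langle U\psi_\theta,h\rangle$), so it cancels. Hence
$$\langle g,U^\alpha_z\phi_\theta\rangle\langle U^\alpha_z\psi_\theta,h\rangle=\langle g,U^\alpha_{e^{i\theta},z}\phi\rangle\langle U^\alpha_{e^{i\theta},z}\psi,h\rangle,$$
and the left side of the theorem equals
$$(\alpha+1)\int_0^{2\pi}\frac{d\theta}{2\pi}\ind\langle g,U^\alpha_{e^{i\theta},z}\phi\rangle\langle U^\alpha_{e^{i\theta},z}\psi,h\rangle\,d\lambda(z).$$

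\textbf{Step 2 (kernel functions).} By Lemma~\ref{3.3}, the identity holds when $g,h,\phi,\psi$ are reproducing kernels. Both sides are sesquilinear in $(g,\phi)$ and in $(\psi,h)$ with matching conjugation: linear in $g$ and $\psi$, conjugate linear in $h$ and $\phi$, and similarly for $\langle g,h\rangle\langle\psi,\phi\rangle$. So the identity extends to all four functions in $\mathcal D=\mathrm{span}\{K^\alpha_z\}$, which is dense in $A^2_\alpha$.

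\textbf{Step 3 (density).} This is the main obstacle. First take $g=h$ and $\phi=\psi$ in $\mathcal D$. The identity then gives
$$Q(g,\phi):=(\alpha+1)\int\!\!\int|\langle g,U^\alpha_{e^{i\theta},z}\phi\rangle|^2\,dH=\|g\|^2\|\phi\|^2.$$
So for fixed $\phi\in\mathcal D$, the map $g\mapsto \langle g,U^\alpha_{e^{i\theta},z}\phi\rangle$ is an isometry (up to the factor $\|\phi\|$) from $(\mathcal D,\|\cdot\|)$ into $L^2(\T\times\D,(\alpha+1)\,dH)$. It therefore extends by continuity to all of $A^2_\alpha$. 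The extension agrees with the pointwise function, because pointwise the map is continuous in $g$ and $L^2$ limits have a.e.\ convergent subsequences (Fatou's lemma). The same argument applies in $\phi$, using the symmetry
$$|\langle g,U\phi\rangle|=|\langle U^{-1}g,\phi\rangle|$$
together with the invariance of Haar measure under inversion (the group is unimodular). This yields $Q(g,\phi)=\|g\|^2\|\phi\|^2$ for all $g,\phi\in A^2_\alpha$.

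Having the isometry for general arguments, the bilinear identity follows by Cauchy--Schwarz. The integrand is the product of two $L^2(dH)$ functions, namely $\langle g,U\phi\rangle$ and $\overline{\langle h,U\psi\rangle}$. Approximating $g,h,\phi,\psi$ by elements of $\mathcal D$, each factor converges in $L^2$ by the bound $Q(g,\phi)\le\|g\|^2\|\phi\|^2$ applied to differences. Hence the integrals converge, and the right side $\langle g,h\rangle\langle\psi,\phi\rangle$ converges trivially. The identity for all $\phi,\psi,g,h\in A^2_\alpha$ follows, and Step 1 converts it to the form stated in Theorem~\ref{3.4}.

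The delicate point is passing from the polarized identity on $\mathcal D$ to the quadratic isometry in both variables simultaneously. If the inversion trick is awkward, I would instead extend in $g$ first with $\phi\in\mathcal D$ fixed, and then extend in $\phi$ with $g\in A^2_\alpha$ fixed, by the same Fatou argument. This works because $Q$ is jointly symmetric up to Haar-measure invariance.
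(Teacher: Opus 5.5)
Your proposal is correct and follows the paper's route: Lemma~\ref{3.3} together with \eqref{eq3.3} gives the identity on $\mathcal D_\alpha$ by multilinearity, and a density argument does the rest. Your $L^2$-isometry argument in Step 3 supplies a justification the paper omits, namely that the integral for general $g,h$ actually equals the continuous extension of the form. The inversion/Haar-symmetry remark is unnecessary: for fixed $g\in A^2_\alpha$, the map $\phi\mapsto\langle U^\alpha_{e^{i\theta},z}\phi,g\rangle$ extends from $\mathcal D_\alpha$ by exactly the same argument, because $U^\alpha_{e^{i\theta},z}$ is bounded.
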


\begin{proof}
Let $\mathcal D_{\alpha}$ denote the vector space of all finite
linear combinations of reproducing kernels in $A_{\alpha}^2$.
It is clear that $\mathcal D_{\alpha}$ is dense in $A_{\alpha}^2$.
By Lemma \ref{3.3} and \eqref{eq3.3}, we have
\begin{equation}\label{eq3.10}
(\alpha+1)\int_{0}^{2\pi}\frac{d\theta}{2\pi}\ind\langle g,
\,U^{\alpha}_{z}\phi_\theta\rangle\langle U^{\alpha}_{z}
\psi_\theta,\,h \rangle\,d\lambda(z)=\langle g,\,h
\rangle\langle \psi,\,\phi\rangle
\end{equation}
for all $\phi,\psi,g,h\in\mathcal D_{\alpha}$.

Fix $\phi,\psi\in\mathcal D_{\alpha}$. The sesquilinear form
\begin{equation}\label{eq3.11}
(g,h)\mapsto(\alpha+1)\int_{0}^{2\pi}\frac{d\theta}{2\pi}
\ind\langle g,\,U^{\alpha}_{z}\phi_\theta\rangle\langle
U^{\alpha}_{z}\psi_\theta,\,h \rangle\,d\lambda(z)
\end{equation}
is well defined on $\mathcal D_{\alpha}\times
\mathcal D_{\alpha}$ and coincides with the restriction to
$\mathcal D_{\alpha}\times\mathcal D_{\alpha}$
of the sesquilinear form
$$(g,h)\mapsto\langle g,\,h\rangle\langle \psi,\,\phi
\rangle,\qquad g,h\in A_{\alpha}^2.$$
Thus \eqref{eq3.11} extends to a bounded sesquilinear form on
$A_{\alpha}^2\times A_{\alpha}^2$. As a result, the
identity \eqref{eq3.10} holds for $g,h\in A_{\alpha}^2$ and
$\phi,\psi\in \mathcal D_{\alpha}$. Similarly, for any
$g,h\in A_{\alpha}^2$, the sesquilinear form
$$(\psi,\phi)\mapsto (\alpha+1)\int_{0}^{2\pi}
\frac{d\theta}{2\pi}\ind\langle g,\,U^{\alpha}_{z}
\phi_\theta\rangle\langle U^{\alpha}_{z}\psi_\theta,\,h
\rangle\,d\lambda(z)$$
coincides with $(\psi,\phi)\mapsto \langle g,\,h\rangle
\langle \psi,\,\phi\rangle$ and extends boundedly to
$A_{\alpha}^2\times A_{\alpha}^2$. This proves
the desired result.
\end{proof}

The orthogonality relation in Theorem~\ref{3.4} explains why
localization operators for $A^2_\alpha$ are defined as in
Definition~\ref{Bergman localization}. By Theorem \ref{3.4} and
the Cauchy-Schwarz inequality, we see that
$$\|\mathbf L^{\phi,\psi,\alpha}_f\|\leq \|f\|_{\infty}$$
for $f\in L^{\infty}(\T\times \D)$ and $\phi,\psi\in A_{\alpha}^2$.

Clearly, the localization operator $\mathbf L_{f}^{\phi,\psi,\alpha}$ on
$A_{\alpha}^2$ given in Definition~\ref{Bergman localization} also
makes sense for $f$ in certain symbol classes more general than
$L^\infty(\T\times\D)$. For example, if $f\in L^1(\T\times\D,dH)$,
then $\mathbf L^{\phi,\psi,\alpha}_f$ is well-defined. Note that
$\mathbf L^{\phi,\psi,\alpha}_f$ can be rewritten as the following
operator-valued integral:
\begin{equation}\label{eq3.12}
\mathbf L^{\phi,\psi,\alpha}_f=(\alpha+1)\int_0^{2\pi}
\frac{d\theta}{2\pi}\ind f(e^{i\theta},z)
\big[(U^{\alpha}_{z}\psi_\theta)\otimes
(U^{\alpha}_{z}\phi_\theta)\big]\,d\lambda(z),
\end{equation}
where $x\otimes y$ is the rank-one operator on $A^2_\alpha$
defined by $(x\otimes y)z=\langle z,\,y\rangle x$. The same remark
applies to localization operators on the Fock space $F^2_\beta$.

\section{Weak convergence of localization operators on $A^2_{\alpha}$}

In this section, we show that localization operators on the Bergman
space $A_{\beta r^2}^2$ converge weakly to a localization operator
on the Fock space $F_{\beta}^2$ as $r$ goes to infinity. We also
give several applications of this limit theorem.

\begin{lem}\label{4.1}
For any $\beta>0$ and $\phi,\psi,g,h\in F_{\beta}^2$, we have
\begin{align*}
\lim_{r\to \infty}(\beta r^2+1)&\int_{0}^{2\pi}
\frac{d\theta}{2\pi}\ind\langle g_r,\,U^{\beta r^2}_z
(\phi_r)_\theta\rangle_{A_{\beta r^2}^2}
\langle U^{\beta r^2}_z(\psi_r)_\theta,\,h_r
\rangle_{A_{\beta r^2}^2}\,d\lambda(z)\\
&=\frac{\beta}{\pi}\inc\langle g,\,W^{\beta}_z\phi
\rangle_{F_{\beta}^2}\langle W^{\beta}_z\psi,\,h
\rangle_{F_{\beta}^2}\,dA(z)\\
&=\frac{\beta}{\pi}\int_0^{2\pi}\frac{d\theta}{2\pi}
\inc\langle g,\,W^{\beta}_z\phi_{\theta}
\rangle_{F_{\beta}^2}\langle W^{\beta}_z\psi_{\theta},
\,h\rangle_{F_{\beta}^2}\,dA(z),
\end{align*}
where $\phi_{\theta}(\zeta)=\phi(e^{i\theta}\zeta)$ and
$g_r(\zeta)=g(r\zeta)$.
\end{lem}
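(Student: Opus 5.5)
The plan is to use the two orthogonality relations already proved and not pass to any limit inside the integrals. By Theorem~\ref{3.4}, applied with $\alpha=\beta r^2$ to the functions $g_r,h_r,\phi_r,\psi_r$, the left-hand integral equals
$$\langle g_r,h_r\rangle_{A^2_{\beta r^2}}\langle\psi_r,\phi_r\rangle_{A^2_{\beta r^2}}$$
for every $r>0$. For this we only need $g_r,h_r,\phi_r,\psi_r\in A^2_{\beta r^2}$. That holds because each is an entire function restricted to $\D$, hence bounded on $\D$. So the prelimit quantity is exactly a product of two inner products, and the lemma reduces to a statement about convergence of inner products.

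On the Fock side, Theorem~\ref{3.1} gives
$$\frac{\beta}{\pi}\int_{\C}\langle g,W^\beta_z\phi\rangle\langle W^\beta_z\psi,h\rangle\,dA(z)=\langle g,h\rangle_{F^2_\beta}\langle\psi,\phi\rangle_{F^2_\beta}.$$
For the rotated version, note that $\phi\mapsto\phi_\theta$ is unitary on $F^2_\beta$, because $\mu_\beta$ is rotation invariant. Hence $\langle\psi_\theta,\phi_\theta\rangle=\langle\psi,\phi\rangle$. Averaging over $\theta$ then gives the third expression, as already observed in the discussion preceding Definition~\ref{Fock localization}. It therefore suffices to show that
$$\lim_{r\to\infty}\langle u_r,v_r\rangle_{A^2_{\beta r^2}}=\langle u,v\rangle_{F^2_\beta},\qquad u,v\in F^2_\beta.$$

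For this, write the inner product as an integral,
$$\langle u_r,v_r\rangle_{A^2_{\beta r^2}}=\int_{\D}F(rz)\,dA_{\beta r^2}(z),\qquad F=u\overline v.$$
By Cauchy--Schwarz, $F\in L^1(\C,d\mu_\beta)$. Theorem~\ref{2.1} with $\sigma=0$ then gives convergence to $\int_{\C}u\overline v\,d\mu_\beta$, which is the claimed limit. Alternatively, one can polarize and use Corollary~\ref{2.2}.

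The only real obstacle would be justifying convergence of the integral in $z$ directly. Reducing both sides through the orthogonality relations avoids this entirely, so the main care point is only to check that the hypotheses of Theorem~\ref{3.4} and Theorem~\ref{2.1} are met, which they are as noted above.
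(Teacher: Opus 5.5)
Your proposal is correct and follows the paper's proof essentially step for step: Theorem~\ref{3.4} turns the prelimit integral into $\langle g_r,h_r\rangle\langle\psi_r,\phi_r\rangle$, Theorem~\ref{2.1} (with $\sigma=0$) sends each factor to the corresponding $F^2_\beta$ inner product, and Theorem~\ref{3.1} together with rotation invariance of $\mu_\beta$ produces the two Fock-side expressions. Your explicit checks that $g_r,h_r,\phi_r,\psi_r\in A^2_{\beta r^2}$ and that $u\overline{v}\in L^1(\C,d\mu_\beta)$ are exactly the hypotheses the paper uses without stating them.
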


\begin{proof}
From Theorem \ref{2.1}, \ref{3.4}, and \ref{3.1},
we deduce that
\begin{align*}
(\beta r^2+1)&\int_{0}^{2\pi}\frac{d\theta}{2\pi}\ind\langle g_r,
\,U^{\beta r^2}_z(\phi_r)_\theta\rangle_{A_{\beta r^2}^2}
\langle U^{\beta r^2}_z(\psi_r)_\theta,\,h_r
\rangle_{A_{\beta r^2}^2}\,d\lambda(z)\\
&=\langle g_r,\,h_r\rangle_{A_{\beta r^2}^2}\langle\psi_r,\,\phi_r
\rangle_{A_{\beta r^2}^2}\\
&=\left(\ind g(rz)\overline{h(rz)}\,dA_{\beta r^2}(z)\right)
\left(\ind\psi(rz)\overline{\phi(rz)}\,dA_{\beta r^2}(z)\right)\\[8pt]
&\stackrel{r\to \infty}{\xrightarrow{\hspace*{1cm}}}
\left(\inc g(z)\overline{h(z)}\,d\mu_{\beta}(z)\right)
\left(\inc\psi(z)\overline{\phi(z)}\,d\mu_{\beta}(z)\right)\\
&=\langle g,\,h\rangle_{F_{\beta}^2}\langle\psi,\,\phi
\rangle_{F_{\beta}^2}\\
&=\frac{\beta}{\pi}\inc\langle g,
\,W^{\beta}_z\phi\rangle_{F_{\beta}^2}\langle
W_z^{\beta}\psi,\,h\rangle_{F_{\beta}^2}\,dA(z)\\
&=\frac{\beta}{\pi}\int_0^{2\pi}\frac{d\theta}{2\pi}
\inc\langle g,\,W^{\beta}_z\phi_{\theta}
\rangle_{F_{\beta}^2}\langle W^{\beta}_z
\psi_{\theta},\,h\rangle_{F_{\beta}^2}\,dA(z),
\end{align*}
where the last equality follows from the rotation-invariance
of the Gaussian measure; see the early part of the previous
section. This completes the proof of the lemma.
\end{proof}

\begin{lem}\label{4.2}
Let $\phi,\psi\in F_{\beta}^2$ with $\beta>0$. For
$g,h\in F_{\gamma}^2$ with $0<\gamma<\beta$,
$z\in\C$ and $\theta\in [0,2\pi)$, we have
$$\lim_{r\to \infty}\frac{\langle g_r,\,U^{\beta r^2}_{z/r}
(\phi_r)_\theta\rangle_{A_{\beta r^2}^2}\langle
U^{\beta r^2}_{z/r}(\psi_r)_\theta,\,h_r
\rangle_{A_{\beta r^2}^2}}{\left(1-\left|\frac{z}{r}
\right|^2\right)^2}=\langle g,\,W^{\beta}_z\phi_{\theta}
\rangle_{F_{\beta}^2}\langle W^{\beta}_z
\psi_{\theta},\,h\rangle_{F_{\beta}^2}.$$
\end{lem}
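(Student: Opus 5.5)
The plan is to treat the two inner products separately. I will show that, for fixed $z\in\C$ and $\theta$,
$$\frac{\langle g_r,\,U^{\beta r^2}_{z/r}(\phi_r)_\theta\rangle_{A^2_{\beta r^2}}}{1-|z/r|^2}\longrightarrow\langle g,\,W^\beta_z\phi_\theta\rangle_{F^2_\beta},$$
and similarly for the factor involving $\psi$ and $h$. Since $(1-|z/r|^2)\to1$, the splitting of the denominator between the two factors is harmless. The method is to move the M\"obius operator onto $g_r$, rescale by $r$, and apply dominated convergence exactly as in the proof of Theorem~\ref{2.1}.

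\emph{Moving the operator and rescaling.} Because $\varphi_a\circ\varphi_a=\mathrm{id}$, the operator $U^\alpha_a$ is a unitary involution, hence self-adjoint. So
$$\langle g_r,U^\alpha_a(\phi_r)_\theta\rangle=\langle U^\alpha_a g_r,(\phi_r)_\theta\rangle,\qquad a=z/r,\ \alpha=\beta r^2.$$
Substituting $w=u/r$ as in \eqref{eq2.1} gives
$$\frac{\beta r^2+1}{\pi r^2}\int_{r\D}G_r(u)\,\overline{\phi(e^{i\theta}u)}\Big(1-\frac{|u|^2}{r^2}\Big)^{\beta r^2}dA(u),$$
where
$$G_r(u)=g\Big(r\varphi_{z/r}(u/r)\Big)\frac{(1-|z|^2/r^2)^{1+\beta r^2/2}}{(1-\bar z u/r^2)^{2+\beta r^2}}.$$

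\emph{Pointwise limit.} We have $r\varphi_{z/r}(u/r)=(u-z)/(1-\bar zu/r^2)\to u-z$. The kernel factor tends to $e^{-\beta|z|^2/2}e^{\beta\bar z u}$. Hence $G_r(u)\to g(u-z)e^{\beta\bar zu-\beta|z|^2/2}=W^\beta_zg(u)$. Together with $(1-|u|^2/r^2)^{\beta r^2}\mathbbm 1_{r\D}\to e^{-\beta|u|^2}$, the integrand converges to that of $\langle W^\beta_zg,\phi_\theta\rangle_{F^2_\beta}$. This equals $\langle g,W^\beta_{-z}\phi_\theta\rangle_{F^2_\beta}$, since $(W^\beta_z)^*=W^\beta_{-z}$.

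This yields $W^\beta_{-z}$ rather than $W^\beta_z$, so the sign convention must be reconciled. In the Bergman case one has $U_a^*=U_a$, whereas in the Fock case $W_z^*=W_{-z}$, so the sign has to be tracked carefully. Concretely, I will check whether the correct identification is $U^{\beta r^2}_{z/r}\leftrightarrow W^\beta_{-z}$ up to a unimodular factor that cancels between the two factors. If it is, I will note that the change $z\mapsto -z$ is immaterial in the way the lemma is later used (integration against $dA(z)$), or else adjust the statement accordingly. The $\psi,h$ factor is handled identically.

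\emph{Domination (the main obstacle).} This is where $\gamma<\beta$ enters, and I expect it to be the hardest step. The bounds I would use are as follows.

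First, $|\varphi_a(w)|\le|w|+|a|$, so $|r\varphi_{z/r}(u/r)|\le|u|+|z|$. Combined with the pointwise estimate $|g(\zeta)|\le\|g\|_{F^2_\gamma}e^{\gamma|\zeta|^2/2}$, this gives
$$\big|g\big(r\varphi_{z/r}(u/r)\big)\big|\le C e^{\gamma(|u|+|z|)^2/2}.$$

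Second, for the kernel factor, I drop $(1-|a|^2)\le1$ and use $-2\log(1-x)\le 2x/(1-x)$ with $x=|z||u|/r^2\le|z|/r$. This bounds it by $e^{C_z|u|}$ uniformly for large $r$, which avoids the naive bound $((1+|a|)/(1-|a|))^{\beta r^2/2}$ that blows up.

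Third, $|\phi(e^{i\theta}u)|\le\|\phi\|e^{\beta|u|^2/2}$, and $(1-|u|^2/r^2)^{\beta r^2}\le e^{-\beta|u|^2}$ by \eqref{eq2.3}.

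Altogether the integrand is dominated by $Ce^{\gamma(|u|+|z|)^2/2+C_z|u|+\beta|u|^2/2-\beta|u|^2}$. This is integrable precisely because $\gamma<\beta$. Dominated convergence then finishes the proof.
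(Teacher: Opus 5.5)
\textbf{Gap: $U^\alpha_a$ is not self-adjoint.} The step that fails is your claim that $U^\alpha_a$ is a self-adjoint involution. With the paper's convention $\varphi_a(\zeta)=(\zeta-a)/(1-\bar a\zeta)$, one has $\varphi_a^{-1}=\varphi_{-a}$, and $\varphi_a\circ\varphi_a\neq\mathrm{id}$ for $a\neq0$; for example, $\varphi_a(\varphi_a(0))=-2a/(1+|a|^2)$. You are thinking of the involutive map $(a-\zeta)/(1-\bar a\zeta)$.

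Using $1-\bar a\varphi_{-a}(\zeta)=(1-|a|^2)/(1+\bar a\zeta)$ and the chain rule, one checks that $U^\alpha_{-a}U^\alpha_a=I$. Hence $(U^\alpha_a)^*=U^\alpha_{-a}$, which differs from $U^\alpha_a$: already $U^\alpha_a1=k^\alpha_a$ while $U^\alpha_{-a}1=k^\alpha_{-a}$. This error, and not any asymmetry between the Bergman and Fock sides, is the source of your $W^\beta_{-z}$. What your computation actually proves is
$$\langle U^{\beta r^2}_{z/r}g_r,(\phi_r)_\theta\rangle=\langle g_r,U^{\beta r^2}_{-z/r}(\phi_r)_\theta\rangle\to\langle g,W^\beta_{-z}\phi_\theta\rangle,$$
i.e.\ the lemma at the point $-z$, not at $z$.

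None of your proposed reconciliations is available:
\begin{itemize}
\item The two candidate limits do not differ by a unimodular factor. For $\phi=\psi=1$ the products are $e^{-\beta|z|^2}g(z)\overline{h(z)}$ versus $e^{-\beta|z|^2}g(-z)\overline{h(-z)}$.
\item The flip is not immaterial downstream. In \eqref{eq4.10} the limit is integrated against $f(e^{i\theta},z)$, which need not be even in $z$, so you would obtain the localization operator with symbol $f(e^{i\theta},-z)$ instead of $\mathbb L_f^{\phi,\psi,\beta}$.
\item Altering the statement would replace a true lemma by a false one.
\end{itemize}

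\textbf{The repair.} The fix is immediate. Write
$$\langle g_r,U^{\beta r^2}_{z/r}(\phi_r)_\theta\rangle=\langle U^{\beta r^2}_{-z/r}g_r,(\phi_r)_\theta\rangle.$$
After the substitution $w=u/r$, the transported function is
$$g\bigl((u+z)/(1+\bar zu/r^2)\bigr)(1-|z|^2/r^2)^{1+\beta r^2/2}(1+\bar zu/r^2)^{-(2+\beta r^2)}.$$
This tends to $g(u+z)e^{-\beta\bar zu-\beta|z|^2/2}=W^\beta_{-z}g(u)$, and $\langle W^\beta_{-z}g,\phi_\theta\rangle=\langle g,W^\beta_z\phi_\theta\rangle$, exactly as stated. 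Your domination goes through verbatim, because $|\varphi_{-a}(w)|\le|w|+|a|$ and $|1+\bar aw|\ge1-|a||w|$.

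Simpler still is not to move the operator at all, which is what the paper does. It expands $\langle g_r,U^{\beta r^2}_{z/r}(\phi_r)_\theta\rangle$ directly, so the M\"obius map lands in the argument of $\phi$ and the sign comes out right automatically. The paper then bounds that argument multiplicatively by $\frac{n}{n-1}|\zeta-z|$. This forces the auxiliary integer $n$ with $\gamma<\beta\frac{(n-2)^2-2}{(n-1)^2}$.

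Your additive bound $|\varphi_a(w)|\le|w|+|a|$ is cleaner, and it would equally simplify the paper's version. In either arrangement it leaves a Gaussian exponent $-\frac{\beta-\gamma}{2}|u|^2$ plus linear terms, which is integrable precisely because $\gamma<\beta$.
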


\begin{proof}
It suffices to show that
\begin{equation}\label{eq4.1}
\lim_{r\to \infty}\frac{\langle g_r,\,U^{\beta r^2}_{z/r}
(\phi_r)_\theta\rangle_{A_{\beta r^2}^2}}{1-\left|
\frac{z}{r}\right|^2}=\langle g,\,W^{\beta}_z
\phi_{\theta}\rangle_{F_{\beta}^2}.
\end{equation}
Fix $z\in\C$ and make a change of variables. For $|z|<r$,
we obtain{\small
\begin{align}
&\frac{\langle g_r,\,U^{\beta r^2}_{z/r}(\phi_r)_\theta
\rangle_{A_{\beta r^2}^2}}{1-\left|\frac{z}{r}
\right|^2}\nonumber\\
&=\frac{\beta r^2+1}{1-\left|\frac{z}{r}\right|^2}
\ind g_r(\zeta)\overline{\phi_r\left(e^{i\theta}
\frac{\zeta-\frac{z}{r}}{1-\frac{\overline z\zeta}{r}}
\right)\left[\frac{1-\left|\frac{z}{r}\right|^2}
{\left(1-\frac{\overline z\zeta}{r}\right)^2}\right]^{1+
\frac{\beta r^2}{2}}}(1-|\zeta|^2)^{\beta r^2}
\,\frac{dA(\zeta)}{\pi}\nonumber\\
&=\frac{\beta r^2+1}{1-\left|\frac{z}{r}\right|^2}\ind
g(r\zeta)\overline{\phi\left(e^{i\theta}\frac{r\zeta- z}
{1-\frac{\overline z\zeta}{r}}\right)\left[\frac{1-\left|\frac{z}{r}
\right|^2}{\left(1-\frac{\overline z\zeta}{r}\right)^2}
\right]^{1+\frac{\beta r^2}{2}}}(1-|\zeta|^2)^{\beta r^2}
\,\frac{dA(\zeta)}{\pi}\nonumber\\
&=\frac{\beta r^2+1}{r^2 }\left(1-\left|\frac{z}{r}\right|^2
\right)^{\frac{\beta r^2}{2}}\int_{r\D}g(\zeta)
\overline{\phi\left(e^{i\theta}\frac{\zeta-z}
{1-\frac{\overline z\zeta}{r^2}}\right)\left(1-\frac{\overline z\zeta}
{r^2}\right)^{-(2+\beta r^2)}}\left(1-\frac{|\zeta|^2}{r^2}\right)^{\beta r^2}\!\frac{dA(\zeta)}{\pi}\nonumber\\
&=\frac{\beta r^2+1}{r^2 }\left(1-\left|\frac{z}{r}\right|^2
\right)^{\frac{\beta r^2}{2}}\inc g(\zeta)\overline{\phi\left(
e^{i\theta}\frac{\zeta-z}{1-\frac{\overline z\zeta}{r^2}}\right)}
F_r(\zeta)\,\frac{dA(\zeta)}{\pi},\label{eq4.2}
\end{align}
}where
$$F_r(\zeta)=\mathbbm{1}_{r\D}(\zeta)
\left(1-\frac{z\overline\zeta}{r^2}\right)^{-(2+\beta r^2)}
\left(1-\frac{|\zeta|^2}{r^2}\right)^{\beta r^2}.$$
By \eqref{eq2.3}, we have
\begin{equation}\label{eq4.3}
\mathbbm{1}_{r\D}(\zeta)\left(1-\frac{|\zeta|^2}{r^2}
\right)^{\beta r^2}=\mathbbm{1}_{r\D}(\zeta)
\left[\left(1-\frac{|\zeta|^2}{r^2}
\right)^{\frac{\beta r^2}{|\zeta|^2}}\right]^{|\zeta|^2}
\leq e^{-\beta|\zeta|^2}.
\end{equation}
It is easy to see that
\begin{equation}\label{eq4.4}
\left(1-\frac{1}{x}\right)^{1-x}=\left( 1+\frac{1}{x-1}
\right)^{x-1}\leq e,\qquad x\in(1,\infty).
\end{equation}
Since $\left|1- \frac{\overline z \zeta}{r^2}\right|\geq
1-\frac{|z\zeta|}{r^2}$, it follows from \eqref{eq4.4} that
\begin{align}
\mathbbm{1}_{r\D}(\zeta)\left|1- \frac{ z\overline\zeta}{r^2}
\right|^{-\left(2+\beta r^2\right)}
&\leq\mathbbm{1}_{r\D}
(\zeta)\left( 1-\frac{|z\zeta|}{r^2}\right)^{-\left(2+\beta r^2
\right)}\nonumber\\
&=\mathbbm{1}_{r\D}(\zeta)\left[\left( 1-\frac{|z\zeta|}
{r^2}\right)^{1-\frac{r^2}{|z\zeta|}}\right]^{\left(2+\beta r^2
\right)\frac{|z\zeta|}{r^2-|z\zeta|}}\nonumber\\
&\leq \mathbbm{1}_{r\D}(\zeta)\exp\left[\frac{(2+\beta r^2)
|z\zeta|}{r^2-|z\zeta|}\right]\nonumber\\
&=\mathbbm{1}_{r\D}(\zeta)\exp\left[\frac{\beta r^2|z\zeta|}
{r^2-|z\zeta|}+\frac{2|z\zeta|}{r^2-|z\zeta|}\right].\label{eq4.5}
\end{align}
Since $\gamma<\beta$, we can choose a positive integer $n$
such that $\gamma<\beta\frac{(n-2)^2-2}{(n-1)^2}$. We may
assume $r$ is large enough such that $|z|<\frac{r}{n}$. Then
for $|\zeta|<r$ we have
$$r^2-|z\zeta|\geq r^2-r|z|\geq r^2-\frac{r^2}{n}
=\frac{n-1}{n}r^2,$$
and thus
$$\frac{2|z\zeta|}{r^2-|z\zeta|}\leq \frac{2n}{n-1}\frac{|z|}{r}
\frac{|\zeta|}{r}\leq \frac{2}{n-1},\qquad\frac{\beta r^2|z\zeta|}
{r^2-|z\zeta|}\leq \frac{\beta n}{n-1}|z\zeta|.$$
Combining this with \eqref{eq4.5}, we obtain
\begin{equation}\label{eq4.6}
\mathbbm{1}_{r\D}(\zeta)\left|1- \frac{\overline z \zeta}{r^2}
\right|^{-(2+\beta r^2)}\leq e^{\frac{2}{n-1}}
e^{\frac{\beta n}{n-1}|z\zeta|}.
\end{equation}
By \eqref{eq4.3} and \eqref{eq4.6},
$$|F_r(\zeta)|\leq e^{\frac{2}{n-1}}e^{\frac{\beta n}{n-1}
|z\zeta|}e^{-\beta |\zeta|^2}.$$
Since $g\in F_{\gamma}^2$ and $\phi\in F_{\beta}^2$, we have
$$|g(\zeta)|\leq \|g\|_{F_{\gamma}^2}
e^{\frac{\gamma|\zeta|^2}{2}}$$
for $|\zeta|<r$, and
\begin{align*}
\left|\phi\left(e^{i\theta}\frac{\zeta-z}{1-\frac{\overline z\zeta}
{r^2}}\right)\right|\leq\|\phi\|_{F_{\beta}^2}\exp
\left[\frac{\beta\big|\frac{\zeta-z}{1- \frac{\overline z\zeta}{r^2}}
\big|^2}{2}\right]\leq \|\phi\|_{F_{\beta}^2}\exp
\left[\frac{\beta n^2}{2(n-1)^2}|\zeta-z|^2\right].
\end{align*}
It follows that
$$\left|g(\zeta)\overline{\phi\left(e^{i\theta}\frac{\zeta-z}
{1-\frac{\overline z\zeta}{r^2}}\right)}F_r(\zeta)\right|\leq
\big(e^{\frac{2}{n-1}}\|\phi\|_{F_{\beta}^2}\|g\|_{F_{\gamma}^2}
\big)e^{\frac{\beta n}{n-1}|z\zeta|+\frac{\beta n^2}{2(n-1)^2}
|\zeta-z|^2-(\beta-\frac{\gamma}{2})|\zeta|^2}.$$
Since $\gamma<\beta\frac{(n-2)^2-2}{(n-1)^2}$, we have
$\frac{\beta n^2}{2(n-1)^2}<\beta-\frac{\gamma}{2}$ and thus
$$\inc e^{\frac{\beta n}{n-1}|z\zeta|+\frac{\beta n^2}{2(n-1)^2}
|\zeta-z|^2-(\beta-\frac{\gamma}{2})|\zeta|^2}
\,dA(\zeta)<\infty.$$
Applying the dominated convergence theorem, we obtain
\begin{align}\label{eq4.7}
&\lim_{r\to\infty}\int_{\C}g(\zeta)\overline{\phi\left(e^{i\theta}
\frac{\zeta-z}{1- \frac{\overline z\zeta}{r^2}}\right)}
F_r(\zeta)\,\frac{dA(\zeta)}{\pi}\nonumber\\
&\qquad\qquad=\inc g(\zeta)\overline{\phi(e^{i\theta}(\zeta-z))
e^{\beta \overline z\zeta}}e^{-\beta|\zeta|^2}\,\frac{dA(\zeta)}{\pi}.
\end{align}
The desired equality \eqref{eq4.1} now follows
from \eqref{eq4.2} and \eqref{eq4.7}.
\end{proof}

We now prove the main result of this section.

\begin{thm}\label{4.3}
Suppose $\phi,\psi\in F_{\beta}^2$ with $\beta>0$ and
$f\in L^{\infty}(\T\times\C)$. For any $\sigma\geq 0$, we have
\begin{equation}\label{eq4.8}
\lim_{r\to \infty}\langle \mathbf L_{f_{r,\sigma}}^{\phi_r,\psi_r,\beta r^2}
g_r,\,h_r\rangle_{A_{\beta r^2}^2}=\langle \mathbb L_f^{\phi,\psi,\beta}g,
\,h\rangle_{F_{\beta}^2}, \qquad g,h\in F^2_{\beta},
\end{equation}
where
$$f_{r,\sigma}(e^{i\theta},z)=(1-|z|^2)^{\sigma}f(e^{i\theta},rz).$$
\end{thm}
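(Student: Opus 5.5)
Starting from the definition, I would change variables $z\mapsto z/r$ in the Bergman integral. Then $d\lambda(z/r)=\frac{dA(z)}{\pi r^2(1-|z/r|^2)^2}$, and the left side of \eqref{eq4.8} becomes
$$\frac{\beta r^2+1}{\pi r^2}\int_0^{2\pi}\frac{d\theta}{2\pi}\int_{r\D} f(e^{i\theta},z)\Big(1-\frac{|z|^2}{r^2}\Big)^{\sigma}\,\frac{\langle g_r,U^{\beta r^2}_{z/r}(\phi_r)_\theta\rangle\langle U^{\beta r^2}_{z/r}(\psi_r)_\theta,h_r\rangle}{(1-|z/r|^2)^2}\,dA(z).$$
Write the integrand, without $f$, as $G_r(\theta,z)$, extended by $0$ off $r\D$. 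Lemma~\ref{4.2} gives pointwise convergence $G_r(\theta,z)\to \langle g,W^\beta_z\phi_\theta\rangle\langle W^\beta_z\psi_\theta,h\rangle=:G(\theta,z)$, provided $g,h\in F^2_\gamma$ for some $\gamma<\beta$. The factor $(1-|z|^2/r^2)^\sigma$ tends to $1$ and is bounded by $1$.

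Because the integrand is only known to converge pointwise, I will not use dominated convergence directly. Instead I will use the generalized (Pratt/Vitali) dominated convergence theorem with the dominating family $|G_r|$ itself. By Cauchy--Schwarz, $|G_r|\le \tfrac12(|a_r|^2+|b_r|^2)$, where
$$a_r=\frac{\langle g_r,U^{\beta r^2}_{z/r}(\phi_r)_\theta\rangle}{1-|z/r|^2},\qquad b_r=\frac{\langle U^{\beta r^2}_{z/r}(\psi_r)_\theta,h_r\rangle}{1-|z/r|^2}.$$
By Lemma~\ref{4.2}, $|a_r|^2\to|\langle g,W^\beta_z\phi_\theta\rangle|^2$ pointwise. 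By Lemma~\ref{4.1}, applied with $\psi=\phi$ and $h=g$, and after undoing the same change of variables,
$$\frac{\beta r^2+1}{\pi r^2}\iint |a_r|^2\,dA\,\frac{d\theta}{2\pi}\longrightarrow \frac\beta\pi\iint |\langle g,W^\beta_z\phi_\theta\rangle|^2\,dA\,\frac{d\theta}{2\pi}.$$
So the dominating functions converge pointwise and in integral. The same holds for $b_r$. Since $f$ is bounded, $|f\,(1-|z|^2/r^2)^\sigma G_r|\le \|f\|_\infty\tfrac12(|a_r|^2+|b_r|^2)$, and the generalized DCT yields \eqref{eq4.8} for $g,h\in F^2_\gamma$ with $\gamma<\beta$. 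The limit is exactly $\langle\mathbb L_f^{\phi,\psi,\beta}g,h\rangle$.

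It remains to pass to general $g,h\in F^2_\beta$. I will use density of $\bigcup_{\gamma<\beta}F^2_\gamma$ in $F^2_\beta$; polynomials suffice, since they lie in every $F^2_\gamma$. The needed uniform bound comes from the remark after Theorem~\ref{3.4}: $\|\mathbf L^{\phi_r,\psi_r,\beta r^2}_{f_{r,\sigma}}\|\le\|f\|_\infty\|\phi_r\|\|\psi_r\|$. Hence
$$|\langle \mathbf L g_r,h_r\rangle-\langle \mathbf L p_r,q_r\rangle|\le \|f\|_\infty\|\phi_r\|\|\psi_r\|\big(\|g_r-p_r\|\|h_r\|+\|p_r\|\|h_r-q_r\|\big),$$
with all norms in $A^2_{\beta r^2}$. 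By Corollary~\ref{2.2} each of these norms converges to the corresponding $F^2_\beta$ norm, in particular $\|g_r-p_r\|\to\|g-p\|_{F^2_\beta}$. The Fock side has the analogous bound by Theorem~\ref{3.1}. An $\varepsilon/3$ argument then finishes the proof.

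The main obstacle is the domination step. Lemma~\ref{4.2} gives only pointwise convergence, with no uniform majorant in $z$. The plan avoids constructing one by using the exact integral identities (Theorem~\ref{3.4} and Lemma~\ref{4.1}) together with Pratt's lemma, so it is important that Lemma~\ref{4.1} is stated for arbitrary windows and vectors, allowing $\psi=\phi$ and $h=g$.
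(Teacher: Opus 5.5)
Your proposal is correct and follows essentially the same route as the paper: the change of variables $z\mapsto z/r$, pointwise convergence from Lemma~\ref{4.2} for $g,h\in F^2_\gamma$ with $\gamma<\beta$, the majorant $\frac12(|a_r|^2+|b_r|^2)$ whose integrals converge by Lemma~\ref{4.1} (the paper just says ``dominated convergence'' but uses exactly the generalized form you name), and then a density argument through Corollary~\ref{2.2}. Using polynomials in place of general elements of $F^2_\gamma$ is only cosmetic. Keeping the factor $\|\phi_r\|\,\|\psi_r\|$ in the operator-norm bound is a small improvement, since the paper silently drops it.
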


\begin{proof}
First assume $g,h\in F_{\gamma}^2$ with $0<\gamma<\beta$.
By a change of variables,{\footnotesize
\begin{align}
&\langle \mathbf L_{f_{r,\sigma}}^{\phi_r,\psi_r,\beta r^2} g_r,
\,h_r\rangle_{A_{\beta r^2}^2}\nonumber\\
&=(\beta r^2+1)\int_{0}^{2\pi}\frac{d\theta}{2\pi}
\ind f_{r,\sigma}(e^{i\theta},z)\langle g_r,\,
U_{z}^{\beta r^2}(\phi_r)_{\theta}\rangle_{A_{\beta r^2}^2}
\langle U_{z}^{\beta r^2}(\psi_r)_{\theta},\,h_r
\rangle_{A_{\beta r^2}^2}\,d\lambda(z)\nonumber\\
&=\frac{\beta r^2+1}{\pi r^2}\int_{0}^{2\pi}\frac{d\theta}{2\pi}
\int_{r\D}\left(1-\left|\frac{z}{r}\right|^2\right)^{\sigma}
f(e^{i\theta},z)\frac{\langle g_r,\,U_{z/r}^{\beta r^2}
(\phi_r)_\theta\rangle_{A_{\beta r^2}^2}\langle
U_{z/r}^{\beta r^2}(\psi_r)_\theta,\,h_r
\rangle_{A_{\beta r^2}^2}}{\left(1-\left|\frac{z}{r}
\right|^2\right)^2}\,dA(z)\nonumber\\
&=\int_{0}^{2\pi}\frac{d\theta}{2\pi}
\inc F_r(z)\,dA(z). \label{eq4.9}
\end{align}
}where{\small
$$F_r(z)=\frac{\beta r^2+1}{\pi r^2}\mathbbm{1}_{r\D}(z)
\left(1-\left|\frac{z}{r}\right|^2\right)^{\sigma}f(e^{i\theta},z)
\frac{\langle g_r,\,U_{z/r}^{\beta r^2}(\phi_r)_\theta
\rangle_{A_{\beta r^2}^2}\langle U_{z/r}^{\beta r^2}(\psi_r)_\theta,
\,h_r\rangle_{A_{\beta r^2}^2}}
{\left(1-\left|\frac{z}{r}\right|^2\right)^2}.$$
}By Lemma \ref{4.2}, we have
\begin{equation}\label{eq4.10}
\lim_{r\to \infty}F_r(z)=\frac{\beta}{\pi}f(e^{i\theta},z)
\langle g,\,W^{\beta}_z\phi_{\theta}\rangle_{F_{\beta}^2}
\langle W_z^{\beta}\psi_{\theta},\,h\rangle_{F_{\beta}^2}
\end{equation}
for almost every $z\in\C$ and $e^{i\theta}\in\T$. We also have
\begin{align}
|F_r(z)|&\leq \|f\|_{\infty}\frac{\beta r^2+1}{\pi r^2}
\mathbbm{1}_{r\D}(z)\frac{|\langle g_r,\,U_{z/r}^{\beta r^2}
(\phi_r)_\theta\rangle_{A_{\beta r^2}^2}|^2}{2\left(1-\left|
\frac{z}{r}\right|^2\right)^2}\nonumber\\
&\qquad+\|f\|_{\infty}\frac{\beta r^2+1}{\pi r^2}
\mathbbm{1}_{r\D}(z)\frac{|\langle U_{z/r}^{\beta r^2}
(\psi_r)_\theta,\,h_r\rangle_{A_{\beta r^2}^2}|^2}
{2\left(1-\left|\frac{z}{r}\right|^2\right)^2}.\label{eq4.11}
\end{align}
By Lemma \ref{4.2} again, for almost every $z\in\C$ and
$e^{i\theta}\in\T$, we have
\begin{equation}\label{eq4.12}
\lim_{r\to \infty}\frac{\beta r^2+1}{\pi r^2}\mathbbm{1}_{r\D}(z)
\frac{|\langle g_r,\,U_{z/r}^{\beta r^2}(\phi_r)_\theta
\rangle_{A_{\beta r^2}^2}|^2}{2\left(1-\left|\frac{z}{r}\right|^2
\right)^2}=\frac{\beta}{2\pi}|\langle g,\,W^{\beta}_z
\phi_{\theta}\rangle_{F_{\beta}^2}|^2,
\end{equation}
and
\begin{equation}\label{eq4.13}
\lim_{r\to \infty}\frac{\beta r^2+1}{\pi r^2}\mathbbm{1}_{r\D}(z)
\frac{|\langle U_{z/r}^{\beta r^2}(\psi_r)_\theta,\,h_r
\rangle_{A_{\beta r^2}^2}|^2}{2\left(1-\left|\frac{z}{r}\right|^2
\right)^2}=\frac{\beta}{2\pi}|\langle W^{\beta}_z
\psi_{\theta},\,h\rangle_{F_{\beta}^2}|^2.
\end{equation}
By a change of variables and Lemma \ref{4.1}, we have
\begin{align}
&\lim_{r\to \infty}\int_{0}^{2\pi}\frac{d\theta}{2\pi}
\inc\frac{\beta r^2+1}{\pi r^2}\mathbbm{1}_{r\D}(z)
\frac{|\langle g_r,\,U_{z/r}^{\beta r^2}
(\phi_r)_\theta\rangle_{A_{\beta r^2}^2}|^2}{2\left(1-\left|
\frac{z}{r}\right|^2\right)^2}\,dA(z)\nonumber\\
&\quad=\lim_{r\to \infty}\frac{\beta r^2+1}{2}
\int_{0}^{2\pi}\frac{d\theta}{2\pi}\ind|\langle g_r,\,
U_{z}^{\beta r^2}(\phi_r)_\theta
\rangle_{A_{\beta r^2}^2}|^2d\lambda(z)\nonumber\\
&\quad=\frac{\beta}{2\pi}\int_{0}^{2\pi}\frac{d\theta}{2\pi}
\inc|\langle g,\,W^{\beta}_z\phi_{\theta}
\rangle_{F_{\beta}^2}|^2\,dA(z).\label{eq4.14}
\end{align}
and
\begin{align}
&\lim_{r\to\infty}\int_{0}^{2\pi}\frac{d\theta}{2\pi}
\inc\frac{\beta r^2+1}{\pi r^2}
\mathbbm{1}_{r\D}(z)\frac{|\langle U_{z/r}^{\beta r^2}
(\psi_r)_\theta,\,h_r\rangle_{A_{\beta r^2}^2}|^2}
{2\left(1-\left|\frac{z}{r}\right|^2\right)^2}\,dA(z)\nonumber\\
&\quad=\frac{\beta}{2\pi}\int_{0}^{2\pi}\frac{d\theta}{2\pi}
\inc|\langle W^{\beta}_z\psi_{\theta},\,h
\rangle_{F_{\beta}^2}|^2\,dA(z).\label{eq4.15}
\end{align}
In view of \eqref{eq4.9}, \eqref{eq4.10}, \eqref{eq4.11},
\eqref{eq4.12}, \eqref{eq4.13}, \eqref{eq4.14} and \eqref{eq4.15},
we can apply the dominated convergence theorem to get{\small
\begin{align*}
\lim_{r\to \infty}\langle
\mathbf L_{f_{r,\sigma}}^{\phi_r,\psi_r,\beta r^2} g,\,h
\rangle_{A_{\beta r^2}^2}&=\frac{\beta}{\pi}\int_{0}^{2\pi}
\frac{d\theta}{2\pi}\inc f(e^{i\theta},z)\langle g,\,
W^{\beta}_z\phi_{\theta}\rangle_{F_{\beta}^2}
\langle W^{\beta}_z\psi_{\theta},\,h\rangle_{F_{\beta}^2}\,dA(z)\\
&=\langle \mathbb L_f^{\phi,\psi,\beta}g,\,h\rangle_{F_{\beta}^2}.
\end{align*}
}This shows that \eqref{eq4.8} holds for all $g,h\in F_{\gamma}^2$
with $\gamma<\beta$.

In general, let $g,h\in F_{\beta}^2$. For any $\varepsilon>0$,
there are $g_{\varepsilon}$ and $h_{\varepsilon}$ in
$F_{\gamma}^2$ with $\gamma<\beta$ such that
$$\|g-g_{\varepsilon}\|_{F_{\beta}^2}<\varepsilon,
\qquad \|h-h_{\varepsilon}\|_{F_{\beta}^2}<\varepsilon.$$
By the triangle inequality,
\begin{align*}
&\left|\langle \mathbb L_f^{\phi,\psi,\beta}g,\,h
\rangle_{F_{\beta}^2}-\langle \mathbf L_{f_{r,\sigma}}^{\phi_r,\psi_r,
\beta r^2} g_r,\,h_r\rangle_{A_{\beta r^2}^2}\right|
\leq\left|\langle \mathbb L_f^{\phi,\psi,\beta}g,\,h
\rangle_{F_{\beta}^2}-\langle \mathbb L_f^{\phi,\psi,\beta}
g_{\varepsilon},\,h\rangle_{F_{\beta}^2}\right|\\
&\qquad+\left|\langle \mathbb L_f^{\phi,\psi,\beta}g_{\varepsilon},
\,h\rangle_{F_{\beta}^2}-\langle \mathbb L_f^{\phi,\psi,\beta}
g_{\varepsilon},\,h_{\varepsilon}\rangle_{F_{\beta}^2}\right|\\
&\qquad +\left|\langle \mathbb L_f^{\phi,\psi,\beta}g_{\varepsilon},
\,h_{\varepsilon}\rangle_{F_{\beta}^2}-\langle
\mathbf L_{f_{r,\sigma}}^{\phi_r,\psi_r,\beta r^2} (g_{\varepsilon})_r,\,(h_{\varepsilon})_r\rangle_{A_{\beta r^2}^2}\right|\\
&\qquad+\left|\langle \mathbf L_{f_{r,\sigma}}^{\phi_r,\psi_r,\beta r^2}
(g_{\varepsilon})_r,\,(h_{\varepsilon})_r\rangle_{A_{\beta r^2}^2}-
\langle \mathbf L_{f_{r,\sigma}}^{\phi_r,\psi_r,\beta r^2} g_r,
\,(h_{\varepsilon})_r\rangle_{A_{\beta r^2}^2}\right|\\
&\qquad+\left|\langle \mathbf L_{f_{r,\sigma}}^{\phi_r,\psi_r,\beta r^2}
g_r,\,(h_{\varepsilon})_r\rangle_{A_{\beta r^2}^{2}}-\langle
\mathbf L_{f_{r,\sigma}}^{\phi_r,\psi_r,\beta r^2} g_r,\,h_r
\rangle_{A_{\beta r^2}^{2}}\right|
\end{align*}
Since $g_{\varepsilon},h_{\varepsilon}\in F_{\gamma}^2$,
our early analysis shows that
$$\lim_{r\to \infty}\langle \mathbf L_{f_{r,\sigma}}^{\phi_r,\psi_r,\beta r^2}
(g_{\varepsilon})_r,\,(h_{\varepsilon})_r\rangle_{A_{\beta r^2}^2}=
\langle \mathbb L_f^{\phi,\psi,\beta}g_{\varepsilon},
\,h_{\varepsilon}\rangle_{F_{\beta}^2}.$$
Moreover, by the Cauchy-Schwarz inequality, we have
$$\left|\langle \mathbb L_f^{\phi,\psi,\beta}g,\,h
\rangle_{F_{\beta}^2}-\langle \mathbb L_f^{\phi,\psi,\beta}
g_{\varepsilon},\,h\rangle_{F_{\beta}^2}\right|=\left|\langle
\mathbb L_f^{\phi,\psi,\beta}(g-g_{\varepsilon}),\,h
\rangle_{F_{\beta}^2}\right|\leq \|f\|_{\infty}\|h\|\varepsilon,$$
and
$$\left|\langle \mathbb L_f^{\phi,\psi,\beta}g_{\varepsilon},\,h
\rangle_{F_{\beta}^2}-\langle \mathbb L_f^{\phi,\psi,\beta}
g_{\varepsilon},\,h_{\varepsilon}\rangle_{F_{\beta}^2}\right|
\leq \|f\|_{\infty}\|g_{\varepsilon}\|\varepsilon.$$
By Corollary \ref{2.2} and the Cauchy-Schwarz inequality again,
\begin{align*}
&\left|\langle \mathbf L_{f_{r,\sigma}}^{\phi_r,\psi_r,\beta r^2}
(g_{\varepsilon})_r,\,(h_{\varepsilon})_r\rangle_{A_{\beta r^2}^2}-
\langle \mathbf L_{f_{r,\sigma}}^{\phi_r,\psi_r,\beta r^2} g_r,
\,(h_{\varepsilon})_r\rangle_{A_{\beta r^2}^2}\right|\\
&\qquad\leq \|f\|_{\infty}\|(h_{\varepsilon})_r\|_{A_{\beta r^2}^2}
\|(g-g_{\varepsilon})_r\|_{A_{\beta r^2}^2}\\
&\qquad\rightarrow\|f\|_{\infty}\|h_{\varepsilon}\|_{F_{\beta}^2}
\|g-g_{\varepsilon}\|_{F_{\beta}^2}
\end{align*}
as $r\to\infty$, and
\begin{align*}
&\left|\langle \mathbf L_{f_{r,\sigma}}^{\phi_r,\psi_r,\beta r^2} g_r,
\,(h_{\varepsilon})_r\rangle_{A_{\beta r^2}^2}-\langle
\mathbf L_{f_{r,\sigma}}^{\phi_r,\psi_r,\beta r^2} g_r,\,h_r
\rangle_{A_{\beta r^2}^2}\right|\\
&\qquad\leq \|f\|_{\infty}\|g_r\|_{A_{\beta r^2}^2}\|
(h-h_{\varepsilon})_r\|_{A_{\beta r^2}^2}\\
&\qquad\rightarrow\|f\|_{\infty}\|g\|_{F_{\beta}^2}
\|h-h_{\varepsilon}\|_{F_{\beta}^2}
\end{align*}
as $r\to\infty$.
This shows that \eqref{eq4.8} holds for all $g,h\in F_{\beta}^2$.
\end{proof}

\begin{cor}\label{4.4}
Let $\phi,\psi\in F_{\beta}^2$ with $\beta>0$ and
$f\in L^{\infty}(\T\times\C)$. For any $\sigma\geq0$ we have
\begin{equation}\label{eq4.16}
\|\mathbb L_f^{\phi,\psi,\beta}\|_{F_{\beta}^2}\leq\limsup_{r\to\infty}
\|\mathbf L_{f_{r,\sigma}}^{\phi_r,\psi_r,\beta r^2}\|_{A_{\beta r^2}^2},
\end{equation}
where $f_{r,\sigma}(e^{i\theta},z)=(1-|z|^2)^{\sigma}f(e^{i\theta},rz)$.
\end{cor}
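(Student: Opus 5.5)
The plan is to get the norm of $\mathbb L_f^{\phi,\psi,\beta}$ from its sesquilinear form. By Theorem~\ref{4.3} we can evaluate that form as a limit of the Bergman forms, and each Bergman form is bounded by the Bergman operator norm times the Bergman norms of $g_r$ and $h_r$. Those norms converge to the Fock norms by Corollary~\ref{2.2}.

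Write $M=\limsup_{r\to\infty}\|\mathbf L_{f_{r,\sigma}}^{\phi_r,\psi_r,\beta r^2}\|_{A_{\beta r^2}^2}$. We may assume $M<\infty$. In fact $M\le\|f\|_\infty\|\phi\|_{F^2_\beta}\|\psi\|_{F^2_\beta}$, by Theorem~\ref{3.4}, Cauchy--Schwarz and Corollary~\ref{2.2}, but we do not need this. Fix $g,h\in F^2_\beta$.

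For each $r$ we have
$$\left|\langle \mathbf L_{f_{r,\sigma}}^{\phi_r,\psi_r,\beta r^2}g_r,h_r\rangle_{A_{\beta r^2}^2}\right|\le \|\mathbf L_{f_{r,\sigma}}^{\phi_r,\psi_r,\beta r^2}\|_{A^2_{\beta r^2}}\,\|g_r\|_{A^2_{\beta r^2}}\|h_r\|_{A^2_{\beta r^2}}.$$
Here $g_r,h_r$ lie in $A^2_{\beta r^2}$ because they are restrictions of entire functions to a dilated disc. Now take $\limsup_{r\to\infty}$ on both sides.

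\begin{itemize}
\item On the left, Theorem~\ref{4.3} shows the limit exists and equals $|\langle\mathbb L_f^{\phi,\psi,\beta}g,h\rangle_{F^2_\beta}|$.
\item On the right, the norms $\|g_r\|$ and $\|h_r\|$ converge to $\|g\|_{F^2_\beta}$ and $\|h\|_{F^2_\beta}$ by Corollary~\ref{2.2} with $\sigma=0$.
\end{itemize}

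Since the limsup of a product of nonnegative quantities, two of which converge, is the product of the limsup and the limits, we get
$$|\langle\mathbb L_f^{\phi,\psi,\beta}g,h\rangle_{F^2_\beta}|\le M\|g\|_{F^2_\beta}\|h\|_{F^2_\beta}.$$
Taking the supremum over unit vectors $g,h$ gives \eqref{eq4.16}.

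There is no real obstacle here. The only point to watch is the mismatch of parameters. Theorem~\ref{4.3} is stated with the Bergman space $A^2_{\beta r^2}$, with $\sigma$ entering only through the symbol $f_{r,\sigma}$. So the norms of $g_r$ and $h_r$ are taken in $A^2_{\beta r^2}$, and we apply Corollary~\ref{2.2} with parameter $0$, not $\sigma$.
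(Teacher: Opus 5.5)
Your proposal is correct and is essentially the paper's proof: Theorem~\ref{4.3} gives the limit of the Bergman sesquilinear forms, each form is bounded by the Bergman operator norm times $\|g_r\|\,\|h_r\|$, Corollary~\ref{2.2} (applied in $A^2_{\beta r^2}$, i.e.\ with parameter $0$, as you note) sends those norms to $\|g\|_{F^2_\beta}\|h\|_{F^2_\beta}$, and a supremum over unit vectors finishes. Your extra care in assuming $M<\infty$ before splitting the $\limsup$ of the product is a welcome detail that the paper leaves implicit.
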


\begin{proof}
By Theorem \ref{4.3}, Corollary \ref{2.2}, and the Cauchy-Schwarz
inequality, we have
\begin{align*}
|\langle \mathbb L_f^{\phi,\psi,\beta }g,\,h\rangle_{F_{\beta}^2}|
&=\lim_{r\to \infty}|\langle \mathbf L_{f_{r,\sigma}}^{\phi_r,\psi_r,\beta r^2}
g_r,\,h_r\rangle_{A_{\beta r^2}^2}|\\
&\leq\limsup_{r\to \infty}\big(\|\mathbf L_{f_{r,\sigma}}^{\phi_r,\psi_r,
\beta r^2}\|_{A_{\beta r^2}^2}\|g_r\|_{A_{\beta r^2}^2}
\|h_r\|_{A_{\beta r^2}^2}\big)\\
&=\big(\limsup_{r\to \infty}\|\mathbf L_{f_{r,\sigma}}^{\phi_r,\psi_r,r^2}
\|_{A_{\beta r^2}^2}\big)\|g\|_{F_{\beta}^2}\|h\|_{F_{\beta}^2}
\end{align*}
for all $g,h\in F_{\beta}^2$. Taking the supremum over all $h\in
F_{\beta}^2$ with $\|h\|_{F^2_\beta}\le1$ yields the desired inequality.
\end{proof}

If $\phi=\psi=1$, Theorem \ref{4.3} and Corollary \ref{4.4} establish
a connection between Toeplitz operators on Fock spaces and those on
weighted Bergman spaces. Also note that \eqref{eq4.16} holds for more
general symbols and the inequality is sharp. For
$f\in L^2(\C,d\mu_{\beta})$ such that $fK_{z}^{\beta}\in
L^2(\C,d\mu_{\beta})$ for all $z\in\C$, the Toeplitz operator
$\mathbb T_f$ is densely defined on $F_{\beta}^2$ by
$$\mathbb T^{\beta}_fg(z)=\int_{\C}f(w)g(w)e^{\beta z\bar w}
\,d\mu_{\beta}(w),$$
where $g\in \mathcal D_{F_{\beta}^2}=
{\rm span}\{K^{\beta}_z:z\in\C\}$.

\begin{thm}\label{4.5}
Let $\beta>0$. If $f\in L^2(\C,d\mu_{\beta})$ such that
$fK_{z}^{\beta}\in L^2(\C,d\mu_{\beta})$ for all
$z\in\C$, then
\begin{equation}\label{eq4.17}
\|\mathbb T^{\beta}_f\|_{F_{\beta}^2}\leq\limsup_{r\to\infty}
\|\mathbf T^{\beta r^2}_{f_{r,\sigma}}\|_{A_{\beta r^2}^2}
\end{equation}
for any $\sigma\geq 0$, where $f_{r,\sigma}(z)=
(1-|z|^2)^{\sigma}f(rz)$. Moreover, equality holds if
$f=\mathbbm{1}_{\Omega}$, where $\Omega$ is any Euclidean
disc in $\C$ centered at $0$, and $\mathbbm{1}_{\Omega}$ is
the characteristic function of $\Omega$.
\end{thm}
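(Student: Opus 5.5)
The plan has two parts: the inequality \eqref{eq4.17} by weak convergence, and equality for centered discs by diagonalizing both sides.

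\textbf{The inequality.} I would follow the proof of Theorem \ref{4.3} and Corollary \ref{4.4}, with $\phi=\psi=1$ and $f$ independent of $\theta$. The symbol $f$ is no longer bounded, so I work with $g,h$ in the dense class $\mathcal D_{F_\beta^2}$.

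For $g,h\in\mathcal D_{F_\beta^2}$ I write
$$\langle \mathbf T^{\beta r^2}_{f_{r,\sigma}}g_r,h_r\rangle_{A^2_{\beta r^2}}=\ind f(rz)g(rz)\overline{h(rz)}(1-|z|^2)^{\sigma}\,dA_{\beta r^2}(z).$$
Up to the factor $(\beta r^2+1)/(\beta r^2+\sigma+1)\to1$, this equals $\ind F(rz)\,dA_{\beta r^2+\sigma}(z)$ with $F=fg\bar h$. Since $fK^\beta_z\in L^2(\C,d\mu_\beta)$ and $h\in F^2_\beta$, Cauchy--Schwarz gives $F\in L^1(\C,d\mu_\beta)$. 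Theorem \ref{2.1} then gives convergence to $\inc fg\bar h\,d\mu_\beta=\langle\mathbb T^\beta_f g,h\rangle_{F^2_\beta}$.

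Cauchy--Schwarz and Corollary \ref{2.2} give
$$|\langle\mathbb T^\beta_f g,h\rangle|\le \limsup_{r\to\infty}\|\mathbf T^{\beta r^2}_{f_{r,\sigma}}\|\,\|g\|_{F^2_\beta}\|h\|_{F^2_\beta}$$
on $\mathcal D_{F_\beta^2}$. If the right side of \eqref{eq4.17} is finite, this shows $\mathbb T^\beta_f$ extends boundedly with that norm bound. If it is infinite, there is nothing to prove.

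\textbf{Equality for $f=\mathbbm 1_\Omega$, $\Omega=\{|z|<R\}$.} Here both operators are diagonal in the monomials, because the symbols are radial.

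On $F^2_\beta$ the eigenvalues are
$$\frac{\int_{|z|<R}|z|^{2n}\,d\mu_\beta}{\int_{\C}|z|^{2n}\,d\mu_\beta}=P(n+1,\beta R^2),$$
where $P$ is the regularized lower incomplete gamma function. This is decreasing in $n$, so the norm is $P(1,\beta R^2)=1-e^{-\beta R^2}$.

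On $A^2_{\beta r^2}$ the symbol is $(1-|z|^2)^\sigma\mathbbm 1_{|z|<R/r}$. With $t=|z|^2$, $s=R^2/r^2$ and $\alpha=\beta r^2$, the eigenvalues are
$$\lambda_n(r)=\frac{\int_0^s t^n(1-t)^{\alpha+\sigma}\,dt}{\int_0^1 t^n(1-t)^{\alpha}\,dt}.$$
I would bound this by
$$\lambda_n(r)\le\frac{\int_0^s t^n(1-t)^{\alpha}\,dt}{\int_0^1 t^n(1-t)^{\alpha}\,dt},$$
which is a regularized incomplete beta function $I_s(n+1,\alpha+1)$. This ratio is decreasing in $n$, since the weight $t^n$ shifts mass to the right as $n$ grows (a monotone likelihood ratio argument). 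Hence the norm is at most $I_s(1,\alpha+1)=1-(1-R^2/r^2)^{\beta r^2+1}$, which tends to $1-e^{-\beta R^2}$.

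Thus the $\limsup$ is at most $\|\mathbb T^\beta_f\|$. Combined with \eqref{eq4.17}, this gives equality.

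\textbf{Main obstacle.} The only delicate step is justifying that these diagonal norms are attained at $n=0$, i.e. the monotonicity in $n$ of the incomplete beta and gamma ratios. I would prove it by showing that the ratio of consecutive truncated moments, $\int_0^s t^{n+1}w\big/\int_0^s t^n w$, is at most the full ratio $\int_0^1 t^{n+1}w\big/\int_0^1 t^n w$. This holds for any positive weight $w$, by a Chebyshev-type integral inequality, because $t$ is increasing.
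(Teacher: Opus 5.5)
Your proposal is correct and follows the paper's proof. The inequality comes from applying Theorem~\ref{2.1} to $fg\bar h\in L^1(\C,d\mu_\beta)$ for $g,h\in\mathcal D_{F^2_\beta}$, together with Cauchy--Schwarz and Corollary~\ref{2.2}, and the equality case comes from diagonalizing both Toeplitz operators in the monomial bases. The only deviation is on the Bergman side: the paper states the exact norm $\frac{\alpha+1}{\alpha+\sigma+1}\bigl[1-(1-R^2/r^2)^{\alpha+\sigma+1}\bigr]$ (the $n=0$ entry, with monotonicity in $n$ left implicit or referred to Ramos--Tilli), whereas you drop the factor $(1-t)^\sigma\le 1$, prove the monotonicity explicitly via Chebyshev's inequality, and use only the upper bound $1-(1-R^2/r^2)^{\beta r^2+1}$, which suffices because \eqref{eq4.17} supplies the reverse inequality.
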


\begin{proof}
By Theorem \ref{2.1} and Corollary \ref{2.2}, we have
\begin{align*}
|\langle \mathbb T^{\beta}_fg,\,h\rangle_{F_{\beta}^2}|
&=\left|\inc f(w)g(w)\overline{h(w)}d\mu_{\beta}(w)\right|\\
&=\lim_{r\to \infty}\left|\ind f_{r}(w)g_r(w)\overline{h_r(w)}
\,dA_{\beta r^2+\sigma}(w)\right|\\
&=\lim_{r\to \infty}\frac{\beta r^2+\sigma+1}{\beta r^2+1}
\left|\ind f_{r,\sigma}(w)g_r(w)\overline{h_r(w)}
\,dA_{\beta r^2}(w)\right|\\
&=\lim_{r\to \infty}\left|\langle\mathbf T^{\beta r^2}_{f_{r,\sigma}}
g_r,\,h_r\rangle_{A_{\beta r^2}^2}\right|\\
&\leq\limsup_{r\to \infty}\left(\|\mathbf T^{\beta r^2}_{f_{r,\sigma}}
\|_{A_{\beta r^2}^2}\|g_r\|_{A_{\beta r^2}^2}
\|h_r\|_{A_{\beta r^2}^2}\right)\\
&=\left(\limsup_{r\to \infty}\|\mathbf T^{\beta r^2}_{f_{r,\sigma}}
\|_{A_{\beta r^2}^2}\right)\|g\|_{F_{\beta}^2}
\|h\|_{F_{\beta}^2}
\end{align*}
for $g,h\in \mathcal D_{F_{\beta}^2}$. It follows that
$$\|\mathbb T^{\beta}_f\|_{F_{\beta}^2}\leq
\limsup_{r\to\infty}\|\mathbf T^{\beta r^2}_{f_{r,\sigma}}
\|_{A_{\beta r^2}^2}.$$

Next, we show that equality holds in \eqref{eq4.17} if
$f=\mathbbm{1}_{\Omega}$, where $\Omega$ is the
disc $B(0,R)$ in $\C$ centered at $0$ with radius $R$.
An orthonormal basis of $F_{\beta}^2$ is given by
$$\omega^{\beta}_n(z)=\sqrt{\frac{\beta^n}{n!}}
\,z^n,\qquad n\ge0.$$
For non-negative integers $m$ and $n$, we have
\begin{align*}
\langle\mathbb T_f^{\beta}\omega^{\beta}_n,\,
\omega^{\beta}_m\rangle_{F_{\beta}^2}
&=\sqrt{\frac{\beta^n\beta^m}{n!m!}}\int_{B(0,R)}
\zeta^n\bar{\zeta}^m\,d\mu_{\beta}(\zeta)\\
&=\frac{\beta}{\pi}\sqrt{\frac{\beta^n\beta^m}{n!m!}}
\int_0^R\int_0^{2\pi}x^{n+m+1}e^{i(n-m)\theta}
e^{-\beta x^2}\,d\theta\,dx\\
&=\begin{cases}
0, & n\neq m,\\\noalign{\vskip 8pt}
\displaystyle\frac{1}{n!}\int_0^{\beta R^2}
x^{n}e^{-x}\,dx,  &n=m.
\end{cases}
\end{align*}
Thus $\mathbb T^{\beta}_f$ is a diagonal operator with
$$\mathbb T^{\beta}_f\omega^{\beta}_n=
\gamma_n\omega^{\beta}_n,\quad n=0,1,2,\ldots,$$
where
$$\gamma_n=\frac{1}{n!}\int_0^{\beta R^2} x^{n}
e^{-x}\,dx=1-e^{-\beta R^2}\sum_{k=0}^n
\frac{(\beta R^2)^k}{k!}.$$
It follows that
\begin{equation}\label{eq4.18}
\|\mathbb T_f^{\beta}\|_{F_{\beta}^2}=
\sup_{n\geq 0}|\gamma_n|=1-e^{-\beta R^2}.
\end{equation}

An orthonormal basis of $A_{\alpha}^2$ is given by
$$e_n^{\alpha}(z)=\sqrt{\frac{\Gamma(n+\alpha+2)}
{n!\Gamma(\alpha+2)}}z^n,\quad n\geq 0.$$
For $r>R$, on the weighted Bergman space $A_{\alpha}^2$,
we have{\small
\begin{align*}
&\langle \mathbf T^{\alpha}_{f_{r,\sigma}}e^{\alpha}_n,
\,e^{\alpha}_m\rangle_{\alpha}\\[5pt]
&=(\alpha+1)\sqrt{\frac{\Gamma(n+\alpha+2)
\Gamma(m+\alpha+2)}{n!m!\Gamma(\alpha+2)^2}}
\ind(1-|z|^2)^{\alpha+\sigma}\mathbbm{1}_{\Omega}(rz)
z^n\bar{z}^m\,\frac{dA(z)}{\pi}\\[8pt]
&=(\alpha+1)\sqrt{\frac{\Gamma(n+\alpha+2)
\Gamma(m+\alpha+2)}{n!m!\Gamma(\alpha+2)^2}}
\int_0^{R/r}\int_0^{2\pi}(1-x^2)^{\alpha+\sigma}
x^{n+m+1}e^{i(n-m)\theta}\,\frac{d\theta dx}{\pi}\\[8pt]
&=\begin{cases}
0, & n\neq m,\\[5pt]
\displaystyle\frac{(\alpha+1)\Gamma(n+\alpha+2)}
{n!\Gamma(\alpha+2)}\int_0^{R^2/r^2}
(1-x)^{\alpha+\sigma}x^{n}dx,  &n=m.
\end{cases}
\end{align*}
}It follows that for $r>R$ we have
$$\|\mathbf T^{\alpha}_{f_{r,\sigma}}\|_{A_{\alpha}^2}=
\frac{\alpha+1}{\alpha+\sigma+1}\left[1-\left(1-
\frac{R^2}{r^2}\right)^{\alpha+\sigma+1}\right].$$
This also follows from \cite[Theorem 3.1]{RT}; see
\eqref{eq4.20}. As a consequence, we have
\begin{align*}
\lim_{r\to \infty}\|\mathbf T^{\alpha}_{f_{r,\sigma}}
\|_{A_{\beta r^2}^2}
&=\lim_{r\to \infty}\frac{\beta r^2+1}{\beta r^2+\sigma+1}
\left[1-\left(1-\frac{R^2}{r^2}\right)^{\beta r^2+\sigma+1}
\right]\\
&=1-e^{-\beta R^2}=\|\mathbb T_f\|_{F^2_{\beta}},
\end{align*}
completing the proof of the theorem.
\end{proof}

As an application of \eqref{eq4.17}, we will obtain an estimate for
the operator norm of some Toeplitz operators on Fock spaces
(see Corollary \ref{4.7}). We need the next theorem, which
appeared implicitly in \cite[Theorem 5.2]{NT}. For completeness,
we include a detailed proof here.

\begin{thm}\label{4.6}
For $f\in L^1(\D,d\lambda)\cap L^{\infty}(\D)$ we have
\begin{equation}\label{eq4.19}
\|\mathbf T_{f}^{\alpha}\|_{A_{\alpha}^2}\leq
\left(1-\frac{\|f\|_{\infty}^{\alpha+1}}{(\|f\|_{\infty}+
\|f\|_{L^1(\D,d\lambda)})^{\alpha+1}}\right)\|f\|_{\infty}.
\end{equation}
Moreover, equality holds if $f$ is the characteristic function of
a Bergman metric disc in $\D$.
\end{thm}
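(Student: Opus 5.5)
The plan is to reduce \eqref{eq4.19} to a sharp concentration inequality for functions in $A^2_\alpha$, and then to apply a bathtub (rearrangement) argument.

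\emph{Reduction to nonnegative symbols.} Write $M=\|f\|_\infty$ and $L=\|f\|_{L^1(\D,d\lambda)}$. For $g,h\in A^2_\alpha$, Cauchy--Schwarz gives
$$|\langle \mathbf T^\alpha_f g,h\rangle|\le\ind|f||g||h|\,dA_\alpha\le\Big(\ind|f||g|^2dA_\alpha\Big)^{1/2}\Big(\ind|f||h|^2dA_\alpha\Big)^{1/2}.$$
So it suffices to show that, for every unit vector $g$,
$$\ind|f||g|^2\,dA_\alpha\le\big(1-(M/(M+L))^{\alpha+1}\big)M.$$
Put $u_g(z)=(\alpha+1)|g(z)|^2(1-|z|^2)^{\alpha+2}$. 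Then $dA_\alpha=(\alpha+1)(1-|z|^2)^{\alpha+2}d\lambda$, so $\ind u_g\,d\lambda=1$ and
$$\ind|f||g|^2dA_\alpha=\ind|f|\,u_g\,d\lambda .$$

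\emph{Bathtub step.} Since $0\le|f|\le M$ and $\ind|f|\,d\lambda=L$, the layer-cake formula gives
$$\ind|f|u_g\,d\lambda\le M\sup\Big\{\int_E u_g\,d\lambda:\ \lambda(E)\le L/M\Big\}.$$
To see this, write $|f|=M\int_0^1\mathbbm 1_{\{|f|>Mt\}}\,dt$ and use that the level sets have total measure $L/M$ in the averaged sense. Equivalently, the integral is maximized by concentrating $|f|=M$ on the superlevel set of $u_g$ of measure $s=L/M$.

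\emph{Key step: the concentration inequality.} This is the step I expect to be the main obstacle. It is the Bergman analogue of the Nicola--Tilli Faber--Krahn inequality, due to Kulikov and implicit in \cite{NT}. The claim is that for every unit $g\in A^2_\alpha$ and every measurable $E$ with $\lambda(E)=s$,
$$\int_E u_g\,d\lambda\le 1-(1+s)^{-(\alpha+1)},$$
with equality when $g$ is a normalized kernel $k^\alpha_a$ and $E$ is a hyperbolic disc centred at $a$. I would prove it as follows.
\begin{itemize}
\item First show that $\log u_g$ is subharmonic with respect to the hyperbolic Laplacian, with a sharp lower bound on that Laplacian; here $(1-|z|^2)^{\alpha+2}$ contributes a constant hyperbolic curvature term.
\item Combine this with the isoperimetric inequality for $\lambda$ and the coarea formula. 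This yields a differential inequality for the distribution function $\mu(t)=\lambda(\{u_g>t\})$, together with $u_g\le\alpha+1$ (the pointwise estimate from the reproducing kernel).
\item Integrate the differential inequality. This shows that $s\mapsto\int_{\{u_g>t(s)\}}u_g\,d\lambda$ is dominated by the same quantity for $g=1$.
\end{itemize}
For $g=1$ the computation is explicit. With $E=\{|z|<\rho\}$ one has $\lambda(E)=\rho^2/(1-\rho^2)$, so $1-\rho^2=(1+s)^{-1}$, and
$$\int_E u_1\,d\lambda=\int_{|z|<\rho}dA_\alpha=1-(1-\rho^2)^{\alpha+1}=1-(1+s)^{-(\alpha+1)}.$$
Substituting $s=L/M$, so that $1+s=(M+L)/M$, gives exactly \eqref{eq4.19}.

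\emph{Equality case.} Let $f=\mathbbm 1_{D}$ with $D$ a Bergman metric disc centred at $a$. Then $M=1$, and conjugation by the unitary $U^\alpha_a$ turns $\mathbf T^\alpha_f$ into the Toeplitz operator whose symbol is the indicator of a disc $\{|z|<\rho\}$ centred at $0$, using the M\"obius invariance of $\lambda$. As in the proof of Theorem~\ref{4.5}, that operator is diagonal in the basis $e^\alpha_n$, with eigenvalues
$$\frac{(\alpha+1)\Gamma(n+\alpha+2)}{n!\,\Gamma(\alpha+2)}\int_0^{\rho^2}(1-x)^\alpha x^n\,dx .$$
These decrease in $n$, so the norm is the $n=0$ value $1-(1-\rho^2)^{\alpha+1}$. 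Since $\lambda(D)=L=\rho^2/(1-\rho^2)$, this equals the right side of \eqref{eq4.19}.
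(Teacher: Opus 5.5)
Your argument is essentially the paper's proof. Both use the same Cauchy--Schwarz reduction to $f\ge 0$, a layer-cake decomposition, and the sharp concentration inequality $\int_E|g|^2\,dA_\alpha\le\bigl[1-(1+\lambda(E))^{-(\alpha+1)}\bigr]\|g\|^2$; your bathtub step does the work of the paper's Jensen step for the concave function $x\mapsto 1-(1+x)^{-(\alpha+1)}$. Two small differences: the paper simply cites the concentration inequality from Ramos--Tilli \cite[Theorem 3.1]{RT} rather than re-deriving it, and it gets the equality case from the equality statement there, whereas your conjugation by $U^\alpha_a$ and diagonalization is a correct, more self-contained check (the eigenvalues are Beta distribution functions evaluated at $\rho^2$, hence decreasing in $n$). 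If you do sketch the concentration inequality, note that $\log u_g$ is not subharmonic; what the Nicola--Tilli method uses is that its invariant Laplacian is bounded below by a negative constant.
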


\begin{proof}
Let $\Omega$ be a measurable subset of $\D$ with
$\lambda(\Omega)<\infty$. By Theorem~3.1 of Ramos-Tilli
\cite{RT}, for every $g\in A^2_{\alpha}$ with $\alpha>-1$,
we have
\begin{equation}\label{eq4.20}
\int_{\Omega}|g(z)|^2\,dA_{\alpha}(z)\leq  \left[1-\frac{1}{(1+\lambda(\Omega))^{\alpha+1}}\right]\|g\|_{A^2_{\alpha}}^2.
\end{equation}
Moreover, equality holds if and only if $\Omega$ is a Bergman
metric disc $D(z_0,r)$ (up to a Bergman volume zero set) for
some $z_0\in\D,\,\, r>0$ and $g(z)=ck^{\alpha}_{z_0}(z)$.

If $f=\mathbbm{1}_{\Omega}$ for some Bergman metric disc
$\Omega$ in $\D$, equality in \eqref{eq4.19} follows from
the Ramos-Tilli result above. It remains to prove the inequality
in \eqref{eq4.19}.

By the Cauchy-Schwarz inequality, for any $g,h\in A_{\alpha}^2$,
we have
\begin{align*}
|\langle \mathbf T_f^{\alpha}g,\,h\rangle_{A_{\alpha}^2}|
&\leq \int_{\D}|f(z)||g(z)||h(z)|\,dA_{\alpha}(z)\\
&\leq \left(\int_{\D}|g(z)|^2|f(z)|\,dA_{\alpha}(z)\right)^{1/2}\left(\int_{\D}|h(z)|^2|f(z)|\,dA_{\alpha}(z)\right)^{1/2}\\
&=\langle \mathbf T_{|f|}^{\alpha}g,\,g
\rangle_{A_{\alpha}^2}^{1/2}\langle \mathbf T_{|f|}^{\alpha}h,
\,h\rangle_{A_{\alpha}^2}^{1/2}.
\end{align*}
This implies that $\|\mathbf T_{f}^{\alpha}\|_{A_{\alpha}^2}\leq
\|\mathbf T_{|f|}^{\alpha}\|_{A_{\alpha}^2}$. Thus we
may assume $f\geq 0$. Then we have
$$\|\mathbf T^{\alpha}_{f}\|_{A_{\alpha}^2}=
\sup\{\langle \mathbf T^{\alpha}_{f}g,\,g\rangle_{A_{\alpha}^2}:
g\in A_{\alpha}^2,\,\|g\|_{A_{\alpha}^2}\leq1 \}.$$
By Tonelli's theorem, we have
\begin{align}
\langle \mathbf T^{\alpha}_{f}g,\,g\rangle_{A_{\alpha}^2}
&=\ind f(z)|g(z)|^2\,dA_{\alpha}(z)\nonumber\\
&=\ind \int_0^{f(z)}|g(z)|^2\,dt\,dA_{\alpha}(z)\nonumber\\
&=\int_0^{\|f\|_{\infty}}\int_{\{z\in\D:f(z)>t\}}
|g(z)|^2\,dA_{\alpha}(z)\,dt\label{eq4.22}
\end{align}
for $g\in A^2_{\alpha}$. By \eqref{eq4.20},
\begin{equation}\label{eq4.23}
\int_{\{z\in\D:f(z)>t\}}|g(z)|^2dA_{\alpha}(z)\leq\left[1-
\frac{1}{(1+\nu(t))^{\alpha+1}}\right]\|g\|_{A_{\alpha}^2}^2,
\end{equation}
where $\nu(t)$ is the distribution function of $f$ with respect
to the M\"obius invariant area measure, namely,
$$\nu(t)=\lambda(\{z\in\D:f(z)>t\}).$$
Let $h(x)=1-\frac{1}{(1+x)^{\alpha+1}}$ for $x>0$. Then $h$
is concave. Plugging \eqref{eq4.23} into \eqref{eq4.22} and then
applying Jensen's inequality, we obtain
\begin{align*}
\langle \mathbf T_{f}g,\,g\rangle_{A_{\alpha}^2}
&\leq \left(\int_0^{\|f\|_{\infty}}\left[1-\frac{1}
{(1+\nu(t))^{\alpha+1}}\right]\,dt\right)\|g\|_{A_{\alpha}^2}^2\\
&=\|f\|_{\infty}\left(\int_0^{\|f\|_{\infty}}h(\nu(t))\,
\frac{dt}{\|f\|_{\infty}}\right)\|g\|_{A_{\alpha}^2}^2\\
&\leq\|f\|_{\infty}h\left(\int_0^{\|f\|_{\infty}}\nu(t)\,
\frac{dt}{\|f\|_{\infty}}\right)\|g\|_{A_{\alpha}^2}^2\\
&=\|f\|_{\infty}h\left(\frac{\|f\|_{L^1(\D,d\lambda)}}
{\|f\|_{\infty}}\right)\|g\|_{A_{\alpha}^2}^2\\
&=\|f\|_{\infty}\left(1-\frac{\|f\|_{\infty}^{\alpha+1}}
{(\|f\|_{\infty}+\|f\|_{L^1(\D,d\lambda)})^{\alpha+1}}
\right)\|g\|_{A_{\alpha}^2}^2.
\end{align*}
This completes the proof of the theorem.
\end{proof}

Using Theorems \ref{4.5} and \ref{4.6}, we now obtain the
following sharp norm estimate for Toeplitz operators on the
Fock space $F^2_\beta$. We will write $L^1(\C)$ for $L^1(\C,dA)$.

\begin{cor}\label{4.7}
For $f\in L^1(\C)\cap L^{\infty}(\C)$ and $\beta>0$ we have
$$\|\mathbb T^{\beta}_{f}\|_{F_{\beta}^2}\leq \|f\|_{\infty}
\left[1-\exp\left(-\frac{\beta}{\pi}\frac{\|f\|_{L^1(\C)}}
{\|f\|_{\infty}}\right)\right].$$
Equality holds if $f$ is the characteristic function of a disc in $\C$.
\end{cor}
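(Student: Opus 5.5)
Combining Theorem~\ref{4.5} with Theorem~\ref{4.6} gives the inequality. First reduce to $f\geq0$ and $\|f\|_\infty>0$: the Cauchy--Schwarz argument in the proof of Theorem~\ref{4.6}, run on $F^2_\beta$ instead of $A^2_\alpha$, shows $\|\mathbb T^\beta_f\|\le\|\mathbb T^\beta_{|f|}\|$, and the right-hand side depends only on $|f|$. A bounded $f$ certainly satisfies the hypotheses of Theorem~\ref{4.5}: $f\in L^2(\C,d\mu_\beta)$ and $fK^\beta_z\in L^2(\C,d\mu_\beta)$. So for $\sigma=0$ we get
$$\|\mathbb T^\beta_f\|_{F^2_\beta}\le\limsup_{r\to\infty}\|\mathbf T^{\beta r^2}_{f_r}\|_{A^2_{\beta r^2}},\qquad f_r(z)=f(rz).$$

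Next apply Theorem~\ref{4.6} with $\alpha=\beta r^2$ and symbol $f_r$. Note that $\|f_r\|_\infty=\|f\|_\infty=:M$. By the change of variables $w=rz$,
$$\|f_r\|_{L^1(\D,d\lambda)}=\frac1\pi\int_{r\D}\frac{f(w)}{r^2(1-|w|^2/r^2)^2}\,dA(w)=:I_r.$$
Since $f\in L^1(\C)$, this is finite, so $f_r\in L^1(\D,d\lambda)\cap L^\infty(\D)$ for every $r$. Theorem~\ref{4.6} then gives
$$\|\mathbf T^{\beta r^2}_{f_r}\|\le M\Big[1-\big(1+I_r/M\big)^{-(\beta r^2+1)}\Big].$$

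The main step is the asymptotic $r^2 I_r\to\|f\|_{L^1(\C)}/\pi$. The integrand $r^2\cdot r^{-2}f(w)\mathbbm 1_{r\D}(w)(1-|w|^2/r^2)^{-2}$ converges pointwise to $f(w)$, but the factor $(1-|w|^2/r^2)^{-2}$ blows up near $|w|=r$, so there is no uniform domination and this step needs care. I would split the disc. On $|w|<r/2$ the factor is at most $16/9$, so dominated convergence applies. On $r/2\le|w|<r$ the factor is unbounded, so the boundedness of $f$ will not suffice and I need an extra argument. One option is to use that the dependence on $I_r$ is monotone and accept only the one-sided bound that is actually needed; but that bound is an upper bound on $I_r$, which is exactly the hard direction. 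The cleaner route is to exploit the freedom $\sigma>0$ in Theorem~\ref{4.5}. Take $\sigma=2$, so the symbol is $f_{r,2}(z)=(1-|z|^2)^2f(rz)$. Then $\|f_{r,2}\|_\infty\le M$, and
$$\|f_{r,2}\|_{L^1(d\lambda)}=\frac{1}{\pi r^2}\int_{r\D}f(w)\,dA(w)\le\frac{\|f\|_{L^1(\C)}}{\pi r^2}.$$
This needs no limit argument. Since $x\mapsto x\big(1-(x/(x+L))^{\alpha+1}\big)$ is increasing in $x$ for fixed $L$, the bound of Theorem~\ref{4.6} may be evaluated with $M$ in place of $\|f_{r,2}\|_\infty$. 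With $L=\|f\|_{L^1(\C)}/(\pi r^2)$ this gives
$$\|\mathbf T^{\beta r^2}_{f_{r,2}}\|\le M\Big[1-\Big(1+\frac{\|f\|_{L^1(\C)}}{\pi r^2M}\Big)^{-(\beta r^2+1)}\Big]\longrightarrow M\Big[1-\exp\Big(-\frac{\beta\|f\|_{L^1(\C)}}{\pi M}\Big)\Big].$$
Combined with Theorem~\ref{4.5} for $\sigma=2$, this proves the inequality. The monotonicity in $x$ still has to be checked, by differentiating or by rewriting the bound as $x\int_0^1 h(L/(sx)\cdot s)\,ds$-type concavity; this is the one step I expect to verify by hand.

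For equality, let $f=\mathbbm 1_{B(a,R)}$. The Weyl operator $W^\beta_a$ is unitary and satisfies $W^{\beta*}_a\mathbb T^\beta_{f}W^\beta_a=\mathbb T^\beta_{f(\cdot+a)}$, which reduces to $a=0$. Then the right-hand side is $1-\exp(-\beta\pi R^2/\pi)=1-e^{-\beta R^2}$, which equals $\|\mathbb T^\beta_f\|$ by \eqref{eq4.18}.
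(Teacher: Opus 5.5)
Your final argument (Theorem~\ref{4.5} with $\sigma=2$, combined with Theorem~\ref{4.6} applied to $f_{r,2}$ and the exact formula $\|f_{r,2}\|_{L^1(\D,d\lambda)}=\frac{1}{\pi r^2}\int_{r\D}|f|\,dA$) is the paper's proof, with only cosmetic differences: you replace the paper's limit $\|f_{r,2}\|_\infty\to\|f\|_\infty$ by the monotonicity of $x\mapsto x\bigl[1-(x/(x+L))^{n}\bigr]$, which holds because its derivative is $1-(n+1)t^n+nt^{n+1}>0$ with $t=x/(x+L)\in(0,1)$, and you use $W^{\beta*}_a\mathbb T^\beta_fW^\beta_a=\mathbb T^\beta_{f(\cdot+a)}$ to make explicit the reduction to centered discs that the paper leaves implicit when it invokes \eqref{eq4.18}. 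As an aside, in your abandoned $\sigma=0$ attempt the claim $I_r<\infty$ is false in general: for $f(w)=e^{-|w|^2}$ the factor $(1-|w|^2/r^2)^{-2}$ makes $I_r=\infty$ for every $r$, which is exactly why the extra weight $(1-|z|^2)^2$, i.e.\ $\sigma=2$, is needed.
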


\begin{proof}
If $f=\mathbbm{1}_{\Omega}$ for some disc $\Omega$ in
$\C$, then equality follows from \eqref{eq4.18}. It remains
to prove the inequality.

Let $f_{r,2}(z)=(1-|z|^2)^{2}f(rz)$ for $z\in\D$. Then
$f_{r,2}\in L^1(\D,d\lambda)\cap L^{\infty}(\D)$ and thus
from Theorem \ref{4.6} we deduce that
\begin{align}
\|\mathbf T^{\beta r^2}_{f_{r,2}}\|_{A_{\beta r^2}^2}
&\leq \left(1-\frac{\|f_{r,2}\|_{\infty}^{\beta r^2+1}}
{(\|f_{r,2}\|_{\infty}+\|f_{r,2}\|_{L^1(\D,d\lambda)}
)^{\beta r^2+1}}\right)\|f_{r,2}\|_{\infty}\nonumber\\[8pt]
&=\left[1-\left(1-\frac{\|f_{r,2}\|_{L^1(\D,d\lambda)}}
{\|f_{r,2}\|_{\infty}+\|f_{r,2}\|_{L^1(\D,d\lambda)}}
\right)^{\beta r^2+1}\right]\|f_{r,2}\|_{\infty}\nonumber\\[8pt]
&=\left[1-\left(1-\frac{1}{1+\frac{\|f_{r,2}\|_{\infty}}
{\|f_{r,2}\|_{L^1(\D,d\lambda)}}}\right)^{\beta r^2+1}
\right]\|f_{r,2}\|_{\infty}\label{eq4.24}
\end{align}
where $\|f_{r,2}\|_{\infty}=\|f_{r,2}\|_{L^{\infty}(\D)}$.
It is easy to see that
\begin{equation}\label{eq4.25}
\lim_{r\to \infty}\|f_{r,2}\|_{\infty}=\|f\|_{\infty}.
\end{equation}
By a change of variables, we have
$$\|f_{r,2}\|_{L^1(\D,d\lambda)}=\frac{1}{\pi}\ind|f(rz)|\,dA(z)=
\frac{1}{\pi r^2}\int_{r\D}|f(z)|\,dA(z).$$
It follows that\small{
\begin{equation}\label{eq4.26}
\lim_{r\to \infty}\left[1-\left(1-\frac{1}{1+\frac{\|f_{r,2}
\|_{\infty}}{\|f_{r,2}\|_{L^1(\D,d\lambda)}}}
\right)^{\beta r^2+1}\right]=1-\exp\left(-\frac{\beta}{\pi}
\frac{\|f\|_{L^1(\C)}}{\|f\|_{\infty}}\right).
\end{equation}}
From Theorem \ref{4.5}, together with \eqref{eq4.24},
\eqref{eq4.25} and \eqref{eq4.26}, we derive that
$$\|\mathbb T^{\beta}_f\|_{F_{\beta}^2}\leq
\limsup_{r\to\infty}\|\mathbf T^{\beta r^2}_{f_{r,2}}
\|_{A_{\beta r^2}^2}\leq
\|f\|_{\infty}\left[1-\exp\left(-\frac{\beta}{\pi}
\frac{\|f\|_{L^1(\C)}}{\|f\|_{\infty}}\right)\right].$$
This completes the proof.
\end{proof}

Finally in this section we note that, in terms of the canonical
monomial orthonormal bases $\{e_n^{\beta r^2}\}$ for
$A^2_{\beta r^2}$ and $\{\omega^\beta_n\}$ for
$F^2_\beta$, we have
$$\lim_{r\to \infty}\langle \mathbf L_{f_{r,\sigma}}^{\phi_r,\psi_r,
\beta r^2}e_n^{\beta r^2},\,e_m^{\beta r^2}
\rangle_{A_{\beta r^2}^2}=
\langle \mathbb L_f^{\phi,\psi}\omega^{\beta}_n,\,
\omega^{\beta}_m\rangle_{F_{\beta}^2}$$
for all $n\ge0$ and $m\ge0$. In fact, if we write $p_n(z)=z^n$,
then by \eqref{eq4.8} and Stirling's formula,
\begin{align*}
&\langle \mathbf L_{f_{r,\sigma}}^{\phi_r,\psi_r,\beta r^2}
e_n^{\beta r^2},\,e_m^{\beta r^2}
\rangle_{A_{\beta r^2}^2}\\[5pt]
&\quad=\sqrt{\frac{\Gamma(n+\beta r^2+1)
\Gamma(m+\beta r^2+1)}{n!m!\Gamma(\beta r^2+1)^2}}\,
\langle\mathbf L_{f_{r,\sigma}}^{\phi_r,\psi_r,\beta r^2} p_n,
\,p_m\rangle_{A_{\beta r^2}^2}\\[8pt]
&\quad=\frac{1}{r^nr^m}\sqrt{\frac{\Gamma(n+\beta r^2+1)
\Gamma(m+\beta r^2+1)}{n!m!\Gamma(\beta r^2+1)^2}}
\,\langle \mathbf L_{f_{r,\sigma}}^{\phi_r,\psi_r,\beta r^2}
(p_n)_r,\,(p_m)_r\rangle_{A_{\beta r^2}^2}\\[8pt]
&\quad\stackrel{r\to \infty}{\xrightarrow{\hspace*{1cm}}}
\sqrt{\frac{\beta ^n\beta^m}{n!m!}}\,\langle
\mathbb L_f^{\phi,\psi,\beta}p_n,\,p_m
\rangle_{F_{\beta}^2}\\[8pt]
&\quad=\langle \mathbb L_f^{\phi,\psi,\beta}\omega^{\beta}_n,
\,\omega^{\beta}_m\rangle_{F_{\beta}^2}.
\end{align*}
This limit formula is a different version of Theorem \ref{4.3}
for functions in monomial orthonormal bases.

\section{Windowed Berezin transforms and applications}

Given a window function $\psi$ in $A^2_{\alpha}$, we define
the windowed Berezin transform of a function $f$ on $\T\times\D$
associated to $\psi$ by
\begin{align*}
B_{\alpha}^{\psi}f(e^{i\theta},z)=(\alpha+1)\int_{0}^{2\pi}
\frac{dt}{2\pi}\ind f(e^{it},w)|\langle U_{z}^{\alpha}
\psi_{\theta},\,U_{w}^{\alpha}\psi_t\rangle_{A_{\alpha}^2}|^2
\,d\lambda(w),
\end{align*}
where $\psi_t(\zeta)=\psi(e^{it}\zeta)$. If $\psi=1$ and if
$f(e^{i\theta},z)=f(z)$ is independent of $\theta$, this is just the
classical $\alpha$-Berezin transform of $f$; see \cite[Section 6.3]{Zhu}.

Recall that, for $\alpha>-1$, $V^{\alpha}$ is the
unitary operator from $A^2\,(=A_0^2)$ onto $A_{\alpha}^2$ such
that $V^{\alpha}e_n^0=e_n^{\alpha}$, where $\{e_n^\alpha\}$ is
the canonical monomial orthonormal basis for $A^2_\alpha$. For
$\psi\in A^2$, we write $\psi^{\alpha}=V^{\alpha}\psi$.

We will use $L^p(\T\times \D)$ to denote $L^p(\T\times \D,dH)$,
where $dH=\frac{d\theta}{2\pi}d\lambda$ is the Haar measure on $\aut=\T\times\D$. For a
function $f$ on $\D$, we can regard $f$ as a function on
$\T\times \D$ which is independent of the first variable, namely,
$f(e^{i\theta},z)=f(z)$. In view of this, $L^p(\D,d\lambda)$
can be considered a subspace of $L^p(\T\times\D)$.

In this section we prove a limit theorem for windowned Berezin
transforms and apply it to obtain a Szeg\"o type theorem for
localization operators on weighted Bergman spaces. We begin
with several technical lemmas.

\begin{lem}\label{5.1}
Suppose $\alpha>-1$, $0<r<1$, $\psi\in A^2$ with
$\|\psi\|_{A^2}=1$, and $\psi^{\alpha}=V^{\alpha}\psi$. Then
$$\lim_{\alpha\to\infty}\int_{0}^{2\pi}\frac{dt}{2\pi}
\int_{r<|w|<1}|\langle U_{w}^{\alpha}(\psi^{\alpha})_t,
\,\psi^{\alpha}\rangle_{A_{\alpha}^2}|^2\,d\lambda(w)=0,$$
where $(\psi^\alpha)_t(z)=\psi^\alpha(e^{it}z)$.
\end{lem}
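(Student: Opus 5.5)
The statement follows directly from the orthogonality relation of Theorem~\ref{3.4}, applied to $\psi^{\alpha}$ itself. Since the integrand is non-negative, I will bound the integral over the annulus $\{r<|w|<1\}$ by the integral over all of $\D$, and then compute that full integral exactly.

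Since $V^{\alpha}$ is unitary from $A^2$ onto $A^2_{\alpha}$, we have $\|\psi^{\alpha}\|_{A^2_{\alpha}}=\|\psi\|_{A^2}=1$. Apply Theorem~\ref{3.4} in $A^2_\alpha$ with $g=h=\phi=\psi=\psi^{\alpha}$. For the integrand, note that
$$\langle g,\,U^{\alpha}_w\phi_t\rangle\langle U^{\alpha}_w\psi_t,\,h\rangle
=\overline{\langle U^{\alpha}_w(\psi^\alpha)_t,\,\psi^\alpha\rangle}\,
\langle U^{\alpha}_w(\psi^\alpha)_t,\,\psi^\alpha\rangle
=|\langle U_{w}^{\alpha}(\psi^{\alpha})_t,\,\psi^{\alpha}\rangle_{A^2_\alpha}|^2 .$$
Theorem~\ref{3.4} therefore gives
$$(\alpha+1)\int_0^{2\pi}\frac{dt}{2\pi}\ind|\langle U_{w}^{\alpha}(\psi^{\alpha})_t,
\,\psi^{\alpha}\rangle_{A_{\alpha}^2}|^2\,d\lambda(w)=\|\psi^\alpha\|^4_{A^2_\alpha}=1 .$$

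Because the integrand is non-negative, restricting to $r<|w|<1$ only decreases the integral. Hence
$$0\le\int_{0}^{2\pi}\frac{dt}{2\pi}
\int_{r<|w|<1}|\langle U_{w}^{\alpha}(\psi^{\alpha})_t,
\,\psi^{\alpha}\rangle_{A_{\alpha}^2}|^2\,d\lambda(w)\le\frac{1}{\alpha+1},$$
which tends to $0$ as $\alpha\to\infty$. This bound is uniform in $r\in(0,1)$.

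There is no real obstacle; the only points to check are that the sesquilinear pairing in Theorem~\ref{3.4} produces $|\cdot|^2$, as verified above, and that the windows are normalized, which is the unitarity of $V^\alpha$.

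If the lemma is instead intended with the factor $(\alpha+1)$ in front (the scaling used later for the windowed Berezin transform), the trivial bound no longer suffices. In that case I would expand $\psi=\sum a_n e_n^0$ and compute $\langle U^\alpha_w (e^\alpha_n)_t, e^\alpha_m\rangle$ explicitly via \eqref{eq3.3}. The goal would be to show that the mass of $(\alpha+1)|\langle U_w^\alpha(\psi^\alpha)_t,\psi^\alpha\rangle|^2\,d\lambda(w)$ concentrates near $w=0$. The model case is $\psi=1$, where this quantity equals $(\alpha+1)(1-|w|^2)^{\alpha+2}\,d\lambda(w)$ and concentrates at the origin. The general case would follow by truncating $\psi$ to a polynomial and using the total-mass identity above to control the tail.
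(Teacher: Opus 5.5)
Your proof is correct for the lemma exactly as printed: Theorem~\ref{3.4} with $g=h=\phi=\psi=\psi^\alpha$ shows that the integral over all of $\D$ equals $1/(\alpha+1)$, and restricting to the annulus can only decrease it.

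The paper takes a very different route. It approximates $\psi$ by a polynomial $p$, splits into four terms, controls the three cross terms by Theorem~\ref{3.4}, and for $p$ uses the explicit bound $|\langle U_w^\alpha(e_j^\alpha)_t,e_k^\alpha\rangle|^2\le C_{j,k}(\alpha+1)^{j+k}(1-|w|^2)^{2+\alpha}$. It does this because, as you suspected, its argument carries the factor $\alpha+1$ throughout and in fact proves
\[
(\alpha+1)\int_0^{2\pi}\frac{dt}{2\pi}\int_{r<|w|<1}|\langle U_w^\alpha(\psi^\alpha)_t,\psi^\alpha\rangle_{A^2_\alpha}|^2\,d\lambda(w)\longrightarrow 0 .
\]
In other words, the probability measures $(\alpha+1)|\langle U_w^\alpha(\psi^\alpha)_t,\psi^\alpha\rangle_{A^2_\alpha}|^2\,\frac{dt}{2\pi}\,d\lambda(w)$ concentrate at $w=0$; see \eqref{eq5.4}, where the polynomial part is bounded by a polynomial in $\alpha$ times $(1-r^2)^{\alpha+1}$.

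This normalized version is exactly what \eqref{eq5.7} in Lemma~\ref{5.2} invokes. There your bound would give only $2\|f\|_\infty$ rather than $o(1)$. So your argument settles the printed statement in one line, with a rate uniform in $r$, but that statement is too weak for its only use. The paper's longer argument is what delivers the concentration needed for the limit of the windowed Berezin transform.

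Your fallback sketch follows the paper's strategy, but it is missing two ingredients needed to make it a proof.
\begin{enumerate}
\item The cross terms are controlled not by the self-pairing identity you derived but by Theorem~\ref{3.4} with unequal windows, e.g.\ $(\alpha+1)\int_0^{2\pi}\frac{dt}{2\pi}\ind|\langle U_w^\alpha(p^\alpha)_t,\psi^\alpha-p^\alpha\rangle|^2\,d\lambda(w)=\|p\|_{A^2}^2\|\psi-p\|_{A^2}^2$. This holds uniformly in $\alpha$ by unitarity of $V^\alpha$.
\item For the polynomial part you need the quantitative estimate above, together with $(\alpha+1)\int_{r<|w|<1}(1-|w|^2)^{2+\alpha}\,d\lambda(w)=(1-r^2)^{\alpha+1}$. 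Its exponential decay in $\alpha$ beats the polynomial growth $(\alpha+1)^{j+k}$.
\end{enumerate}
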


\begin{proof}
For any $0<\varepsilon<1$ there is a polynomial $p$ such that
$\|p-\psi\|_{A^2}<\varepsilon$. If we write $\psi^{\alpha}=
p^{\alpha}+(\psi^{\alpha}-p^{\alpha})$, then
\begin{align*}
&\int_{0}^{2\pi}\frac{dt}{2\pi}\int_{r<|w|<1}|\langle
U_{w}^{\alpha}(\psi^{\alpha})_t,\,\psi^{\alpha}
\rangle_{A_{\alpha}^2}|^2\,d\lambda(w)\\
&=\int_{0}^{2\pi}\frac{dt}{2\pi}\int_{r<|w|<1}|\langle
U_{w}^{\alpha}[p^{\alpha}+(\psi^{\alpha}-
p^{\alpha})]_t,\,[p^{\alpha}+(\psi^{\alpha}-p^{\alpha})]
\rangle_{A_{\alpha}^2}|^2\,d\lambda(w)\\
&\leq 4\int_{0}^{2\pi}\frac{dt}{2\pi}\int_{r<|w|<1}|\langle
U_{w}^{\alpha}(p^{\alpha})_t,\,p^{\alpha}
\rangle_{A_{\alpha}^2}|^2\,d\lambda(w)\\
&\quad+4\int_{0}^{2\pi}\frac{dt}{2\pi}\int_{r<|w|<1}|
\langle U_{w}^{\alpha}(p^{\alpha})_t,\,\psi^{\alpha}
-p^{\alpha}\rangle_{A_{\alpha}^2}|^2\,d\lambda(w)\\
&\quad+4\int_{0}^{2\pi}\frac{dt}{2\pi}\int_{r<|w|<1}|
\langle U_{w}^{\alpha}(\psi^{\alpha}-p^{\alpha})_t,
\,p^{\alpha}\rangle_{A_{\alpha}^2}|^2\,d\lambda(w)\\
&\quad +4\int_{0}^{2\pi}\frac{dt}{2\pi}\int_{r<|w|<1}|
\langle U_{w}^{\alpha}(\psi^{\alpha}-p^{\alpha})_t,
\,\psi^{\alpha}-p^{\alpha}\rangle_{A_{\alpha}^2}|^2
\,d\lambda(w).
\end{align*}
By Theorem \ref{3.4}, we have
\begin{align*}
4(\alpha+1)&\int_{0}^{2\pi}\frac{dt}{2\pi}\int_{r<|w|<1}|
\langle U_{w}^{\alpha}(p^{\alpha})_t,\,\psi^{\alpha}-
p^{\alpha}\rangle_{A_{\alpha}^2}|^2
\,d\lambda(w)\\[8pt]
&\leq 4\|p^{\alpha}\|_{A^2_{\alpha}}^2\|\psi^{\alpha}-
p^{\alpha}\|_{A^2_{\alpha}}^2
=4\|p\|_{A^2}^2\|\psi-p\|_{A^2}^2<16\varepsilon^2.
\end{align*}
Similarly, we have
$$4(\alpha+1)\int_{0}^{2\pi}\frac{dt}{2\pi}
\int_{r<|w|<1}|\langle U_{w}^{\alpha}
(\psi^{\alpha}-p^{\alpha})_t,\,p^{\alpha}
\rangle_{A_{\alpha}^2}|^2\,d\lambda(w)<16\varepsilon^2,$$
and
$$4(\alpha+1)\int_{0}^{2\pi}\frac{dt}{2\pi}\int_{r<|w|<1}
|\langle U_{w}^{\alpha}(\psi^{\alpha}-p^{\alpha})_t,
\,\psi^{\alpha}-p^{\alpha}\rangle_{A_{\alpha}^2}|^2
\,d\lambda(w)<4\varepsilon^4.$$
To finish the proof, we only need to show that
\begin{equation}\label{eq5.1}
\lim_{\alpha\to \infty}\int_{0}^{2\pi}\frac{dt}{2\pi}
\int_{r<|w|<1}|\langle U_{w}^{\alpha}(p^{\alpha})_t,
\,p^{\alpha}\rangle_{A_{\alpha}^2}|^2\,d\lambda(w)=0.
\end{equation}
Write
$$p=\sum_{j=0}^na_je^0_j.$$
Then
$$p^{\alpha}:=V^{\alpha}p=\sum_{j=1}^na_je_j^{\alpha}.$$
By the Cauchy-Schwarz inequality, we have
\begin{align}\label{eq5.2}
&\int_{0}^{2\pi}\frac{dt}{2\pi}\int_{r<|w|<1}|\langle
U_{w}^{\alpha}(p^{\alpha})_t,\,p^{\alpha}
\rangle_{A_{\alpha}^2}|^2\,d\lambda(w)\nonumber\\
&=\int_{0}^{2\pi}\frac{dt}{2\pi}\int_{r<|w|<1}\bigg|
\sum_{j=0}^n\sum_{k=0}^na_j\overline{a_k}
\langle U_{w}^{\alpha}(e_j^{\alpha})_t,\,e_k^{\alpha}
\rangle_{A_{\alpha}^2}\bigg|^2\,d\lambda(w)\nonumber\\
&\leq\left(\sum_{j=0}^n|a_j|^2\right)^2\sum_{j=0}^n
\sum_{k=0}^n\int_{0}^{2\pi}\frac{dt}{2\pi}\int_{r<|w|<1}
\big|\langle U_{w}^{\alpha}(e_j^{\alpha})_t,\,e_k^{\alpha}
\rangle_{A_{\alpha}^2}\big|^2\,d\lambda(w).
\end{align}
A straightforward calculation shows that{\small
\begin{align*}
&U_{w}^{\alpha}(e_j^{\alpha})_t(\zeta)
=\sqrt{\frac{\Gamma(j+\alpha+2)}{j!\Gamma(\alpha+2)}}
\big[e^{it}\varphi_w(\zeta)\big]^{j}\big[
\varphi_w'(\zeta)\big]^{1+\frac{\alpha}{2}}\\[8pt]
&=e^{ijt}\sqrt{\frac{\Gamma(j+\alpha+2)}
{j!\Gamma(\alpha+2)}}\left(\frac{\zeta-w}
{1-\bar w\zeta}\right)^{j}\left[\frac{1-|w|^2}
{(1-\bar w\zeta)^2}\right]^{1+\frac{\alpha}{2}}\\[8pt]
&=e^{ijt}(1-|w|^2)^{1+\frac{\alpha}{2}}
\sqrt{\frac{\Gamma(j+\alpha+2)}{j!\Gamma(\alpha+2)}}
\frac{(\zeta-w)^j}{(1-\bar w\zeta)^{j+2+\alpha}}\\[8pt]
&=e^{ijt}(1-|w|^2)^{1+\frac{\alpha}{2}}
\sqrt{\frac{\Gamma(j+\alpha+2)}{j!\Gamma(\alpha+2)}}
\sum_{l=0}^j\sum_{m=0}^{\infty}{j \choose l}
\frac{\Gamma(m+j+2+\alpha)}{m!\Gamma(j+2+\alpha)}
(-w)^{j-l}{\bar w}^m\zeta^{m+l}.
\end{align*}
}It follows from Stirling's formula that there is a constant $C_{j,k}$
depending on $j$ and $k$ such that{\small
\begin{align}\label{eq5.3}
&\big|\langle U_{w}^{\alpha}(e_j^{\alpha})_t,
\,e_k^{\alpha}\rangle_{A_{\alpha}^2}\big|^2\nonumber\\[8pt]
&=(1-|w|^2)^{2+\alpha}\frac{k!\Gamma(j+\alpha+2)}
{j!\Gamma(k+\alpha+2)}\left|\sum_{\left\{(l,m)\in
\mathbb{N}^2 :l+m=k;l\leq j\right\}}{j \choose l}
\frac{\Gamma(m+j+2+\alpha)}{m!\Gamma(j+2+\alpha)}
(-w)^{j-l}{\bar w}^m\right|^2\nonumber\\[8pt]
&\leq C_{j,k}(\alpha+1)^{j+k}(1-|w|^2)^{2+\alpha}.
\end{align}
}Combining \eqref{eq5.2} and \eqref{eq5.3}, we obtain
\begin{align}
&(\alpha+1)\int_{0}^{2\pi}\frac{dt}{2\pi}\int_{r<|w|<1}
|\langle U_{w}^{\alpha}(p^{\alpha})_t,\,p^{\alpha}
\rangle_{A_{\alpha}^2}|^2\,d\lambda(w)\nonumber\\
&\leq  \left(\sum_{j=0}^n|a_j|^2\right)^2
\left[\sum_{j=0}^n\sum_{k=0}^nC_{j,k}
(\alpha+1)^{j+k+1}\right]\int_{r<|w|<1}
(1-|w|^2)^{\alpha}\,\frac{dA(w)}{\pi}\nonumber\\
&=\left(\sum_{j=0}^n|a_j|^2\right)^2\left[\sum_{j=0}^n
\sum_{k=0}^nC_{j,k}(\alpha+1)^{j+k}\right]
(1-r^2)^{\alpha+1}.\label{eq5.4}
\end{align}
Equation \eqref{eq5.1} now follows from \eqref{eq5.4} by
letting $\alpha$ goes to infinity. This finishes the proof.
\end{proof}

\begin{lem}\label{5.2}
Suppose $\psi\in A^2$ with $\|\psi\|_{A^2}=1$, $\psi^{\alpha}=V^{\alpha}\psi$, and
$f\in C(\D)\cap L^{\infty}(\D)$. Then
$$\lim_{\alpha\to\infty}B_\alpha^{\psi^\alpha}f(e^{i\theta},z)=f(z)$$
for all $(e^{i\theta},z)\in\T\times \D$.
\end{lem}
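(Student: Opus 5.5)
The strategy is to write $B_\alpha^{\psi^\alpha}f(e^{i\theta},z)-f(z)$ as an average of $f(w)-f(z)$ against a probability density that concentrates near $w=z$ as $\alpha\to\infty$. Continuity of $f$ at $z$ then handles the part near $z$, and Lemma~\ref{5.1} handles the tail.

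First, normalization. Theorem~\ref{3.4} with $g=h=U^\alpha_z\psi^\alpha_\theta$ and $\phi=\psi=\psi^\alpha$ gives
$$(\alpha+1)\int_0^{2\pi}\frac{dt}{2\pi}\ind|\langle U^\alpha_z\psi^\alpha_\theta,U^\alpha_w\psi^\alpha_t\rangle|^2\,d\lambda(w)=\|\psi^\alpha\|^4=1,$$
since $V^\alpha$ is unitary and rotations are unitary. Therefore
$$B_\alpha^{\psi^\alpha}f(e^{i\theta},z)-f(z)=(\alpha+1)\int_0^{2\pi}\frac{dt}{2\pi}\ind (f(w)-f(z))\,|\langle U^\alpha_z\psi^\alpha_\theta,U^\alpha_w\psi^\alpha_t\rangle|^2\,d\lambda(w).$$

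Next, reduce to a kernel centered at the origin. Using \eqref{eq3.3} and \eqref{eq3.4}, $U^\alpha_w$ composed with a rotation is a unimodular constant times $U^\alpha_{e^{it},w}$. The product $(U^\alpha_{e^{i\theta},z})^{*}U^\alpha_{e^{it},w}$ is again such an operator, indexed by a group element $(e^{is},u)$, and $|u|=|\varphi_z(w)|$ up to a rotation; this can be read off from \eqref{eq3.2}, whose second component is a Möbius image. Hence
$$|\langle U^\alpha_z\psi^\alpha_\theta,U^\alpha_w\psi^\alpha_t\rangle|=|\langle \psi^\alpha,U^\alpha_{u}(\psi^\alpha)_{s}\rangle|.$$
Changing variables $(e^{it},w)\mapsto(e^{is},u)$ by left invariance of the Haar measure $dH$ gives
$$|B_\alpha^{\psi^\alpha}f-f(z)|\le (\alpha+1)\int\frac{ds}{2\pi}\ind|f(w(u))-f(z)|\,|\langle U^\alpha_u(\psi^\alpha)_s,\psi^\alpha\rangle|^2\,d\lambda(u),$$
where $w(u)$ is the corresponding point with $|\varphi_z(w)|=|u|$.

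Then split. Given $\varepsilon>0$, choose $r<1$ so that $|f(w)-f(z)|<\varepsilon$ whenever $|\varphi_z(w)|\le r$, which continuity of $f$ at $z$ allows. On $|u|\le r$ the contribution is at most $\varepsilon$, because the kernel has total mass $1$. On $r<|u|<1$ the contribution is bounded by $2\|f\|_\infty$ times
$$(\alpha+1)\int_0^{2\pi}\frac{ds}{2\pi}\int_{r<|u|<1}|\langle U^\alpha_u(\psi^\alpha)_s,\psi^\alpha\rangle|^2d\lambda(u).$$

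The main obstacle is this tail term. As stated, Lemma~\ref{5.1} shows the tail integral tends to zero without the factor $(\alpha+1)$. However, its proof (the $\varepsilon$-estimates via Theorem~\ref{3.4} together with \eqref{eq5.4}) actually controls the quantity with the factor $(\alpha+1)$ included, since \eqref{eq5.4} yields $C(\alpha+1)^{2n}(1-r^2)^{\alpha+1}\to0$. So I would invoke that strengthened form, with the $4\varepsilon$-type error terms absorbed. Letting $\alpha\to\infty$ and then $\varepsilon\to0$ gives the pointwise limit. The group-law bookkeeping that identifies $|u|=|\varphi_z(w)|$ should be routine from Lemma~\ref{3.2}.
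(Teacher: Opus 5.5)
Your proposal is correct and follows essentially the same route as the paper: normalize with Theorem~\ref{3.4}, use the group law and the invariance of $dH$ to recenter the kernel at the identity, then split into a small neighborhood handled by continuity of $f$ at $z$ and a tail handled by Lemma~\ref{5.1}. After recentering, the paper gets $f(\varphi_{-z}(-w))$ against $|\langle U^\alpha_{e^{it},e^{i(\theta-t)}w}\psi^\alpha,\psi^\alpha\rangle|^2$ and uses $|\varphi_{-z}(-w)-z|\le 2|w|$, which is the same as your $|u|=|\varphi_z(w)|$. Your remark about the factor $(\alpha+1)$ is also right: the paper's estimate \eqref{eq5.7} uses Lemma~\ref{5.1} in exactly that strengthened form, and the proof of Lemma~\ref{5.1} via \eqref{eq5.4} does establish it.
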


\begin{proof}
Recall that we regard $f$ as a function on $\T\times \D$ which is
independent of the first variable. By a change of variables,
\eqref{eq3.3}, and \eqref{eq3.4}, we have\small{
\begin{align*}
B_{\alpha}^{\psi^{\alpha}}f(e^{i\theta},z)
&=(\alpha+1)\int_{0}^{2\pi}\frac{dt}{2\pi}\ind f(e^{it},w)|
\langle U_{z}^{\alpha}(\psi^{\alpha})_{\theta},
\,U_{w}^{\alpha}(\psi^{\alpha})_{t}
\rangle_{A_{\alpha}^2}|^2\,d\lambda(w)\\
&=(\alpha+1)\int_{0}^{2\pi}\frac{dt}{2\pi}
\ind f(e^{it},w)|\langle U_{e^{i\theta},z}^{\alpha}
\psi^{\alpha},\,U_{e^{it},w}^{\alpha}\psi^{\alpha}
\rangle_{A_{\alpha}^2}|^2\,d\lambda(w)\\
&=(\alpha+1)\int_{0}^{2\pi}\frac{dt}{2\pi}
\ind f(e^{it},w)|\langle (U_{e^{it},w}^{\alpha})^{\ast}
U_{e^{i\theta},z}^{\alpha}\psi^{\alpha},\,
\psi^{\alpha}\rangle_{A_{\alpha}^2}|^2\,d\lambda(w)\\
&=(\alpha+1)\int_{0}^{2\pi}\frac{dt}{2\pi}\ind f(e^{it},w)
|\langle U_{(e^{it},w)^{-1}}^{\alpha}
U_{e^{i\theta},z}^{\alpha}\psi^{\alpha},\,\psi^{\alpha}
\rangle_{A_{\alpha}^2}|^2\,d\lambda(w)\\
&=(\alpha+1)\int_{0}^{2\pi}\frac{dt}{2\pi}\ind f(e^{it},w)
|\langle U_{(e^{i\theta},z)\cdot(e^{it},w)^{-1}}^{\alpha}
\psi^{\alpha},\,\psi^{\alpha}\rangle_{A_{\alpha}^2}|^2
\,d\lambda(w)\\
&=(\alpha+1)\int_{0}^{2\pi}\frac{dt}{2\pi}
\ind f((e^{it},w)^{-1}\cdot(e^{i\theta},z))|
\langle U_{e^{it},w}^{\alpha}\psi^{\alpha},
\,\psi^{\alpha}\rangle_{A_{\alpha}^2}|^2\,d\lambda(w)\\
&=(\alpha+1)\int_{0}^{2\pi}\frac{dt}{2\pi}\ind
f(\varphi_{-z}(-e^{i(t-\theta)}w))|\langle
U_{e^{it},w}^{\alpha}\psi^{\alpha},\,\psi^{\alpha}
\rangle_{A_{\alpha}^2}|^2\,d\lambda(w)\\
&=(\alpha+1)\int_{0}^{2\pi}\frac{dt}{2\pi}
\ind f(\varphi_{-z}(-w))|\langle U_{e^{it},
e^{i(\theta-t)}w}^{\alpha}\psi^{\alpha},\,\psi^{\alpha}
\rangle_{A_{\alpha}^2}|^2\,d\lambda(w).
\end{align*}
}Since $f$ is continuous at $z$, for any $\varepsilon>0$ there is
a $\delta>0$ such that $|f(w)-f(z)|<\varepsilon$ whenever
$|w-z|<\delta$. Since $\|\psi^{\alpha}\|_{A_{\alpha}^2}=1$, by Theorem \ref{3.4} for any fixed $r\in(0, \delta/2)$ we have\small{
\begin{align}\label{eq5.5}
&B_{\alpha}^{\psi^{\alpha}}f(e^{i\theta},z)-f(z)\\
&=(\alpha+1)\int_{0}^{2\pi}\frac{dt}{2\pi}
\ind[f(\varphi_{-z}(-w))-f(z)]|\langle U_{e^{it},
e^{i(\theta -t)}w}^{\alpha}\psi^{\alpha},\,\psi^{\alpha}
\rangle_{A_{\alpha}^2}|^2\,d\lambda(w)\nonumber\\
&=(\alpha+1)\int_{0}^{2\pi}\frac{dt}{2\pi}\int_{\{|w|<r\}}
[f(\varphi_{-z}(-w))-f(z)]|\langle U_{e^{it},
e^{i(\theta-t)}w}^{\alpha}\psi^{\alpha},\,\psi^{\alpha}
\rangle_{A_{\alpha}^2}|^2\,d\lambda(w)\nonumber\\
&+(\alpha+1)\int_{0}^{2\pi}\frac{dt}{2\pi}\int_{\{r<|w|<1\}}
[f(\varphi_{-z}(-w))-f(z)]|\langle U_{e^{it},
e^{i(\theta-t)}w}^{\alpha}\psi^{\alpha},\,\psi^{\alpha}
\rangle_{A_{\alpha}^2}|^2\,d\lambda(w).\nonumber
\end{align}
}For $|w|<r$, we have
$$|\varphi_{-z}(-w)-z|=\left|\frac{-w+z}{1-\bar zw}-z\right|=
\frac{(1-|z|^2)|w|}{|1-\bar zw|}\leq(1+|z|)|w|<2|w|<\delta.$$
We deduce from Theorem \ref{3.4} and the continuity
of $f$ that\small{
\begin{align}
&(\alpha+1)\int_{0}^{2\pi}\frac{dt}{2\pi}\int_{\{|w|<r\}}
|f(\varphi_{-z}(-w))-f(z)||\langle U_{e^{it},
e^{i(\theta-t)}w}^{\alpha}\psi^{\alpha},
\,\psi^{\alpha}\rangle_{A_{\alpha}^2}|^2
\,d\lambda(w)\nonumber\\
&<\varepsilon(\alpha+1)\int_{0}^{2\pi}\frac{dt}{2\pi}
\int_{\{|w|<r\}}|\langle U_{e^{it},
e^{i(\theta-t)}w}^{\alpha}\psi^{\alpha},\,\psi^{\alpha}
\rangle_{A_{\alpha}^2}|^2\,d\lambda(w)\nonumber\\
&\leq \varepsilon.\label{eq5.6}
\end{align}
}By Lemma \ref{5.1} and the boundedness of $f$, we have\small{
\begin{align}
&(\alpha+1)\int_{0}^{2\pi}\frac{dt}{2\pi}\int_{\{r<|w|<1\}}
|f(\varphi_{-z}(-w))-f(z)||\langle U_{e^{it},
e^{i(\theta-t)}w}^{\alpha}\psi^{\alpha},\,\psi^{\alpha}
\rangle_{A_{\alpha}^2}|^2\,d\lambda(w)\nonumber\\
&\leq 2\|f\|_{\infty}(\alpha+1)\int_{0}^{2\pi}\frac{dt}{2\pi}
\int_{\{r<|w|<1\}}|\langle U_{e^{it},
e^{i(\theta-t)}w}^{\alpha}\psi^{\alpha},\,\psi^{\alpha}
\rangle_{A_{\alpha}^2}|^2\,d\lambda(w)\nonumber\\
&= 2\|f\|_{\infty}(\alpha+1)\int_{0}^{2\pi}\frac{dt}{2\pi}
\int_{\{r<|w|<1\}}|\langle U_{e^{it},w}^{\alpha}\psi^{\alpha},\,\psi^{\alpha}
\rangle_{A_{\alpha}^2}|^2\,d\lambda(w)\nonumber\\
&= 2\|f\|_{\infty}(\alpha+1)\int_{0}^{2\pi}\frac{dt}{2\pi}
\int_{\{r<|w|<1\}}|\langle U_{w}^{\alpha}(\psi^{\alpha})_t,\,\psi^{\alpha}
\rangle_{A_{\alpha}^2}|^2\,d\lambda(w)\nonumber\\
&\rightarrow 0\quad{\rm as}\quad \alpha\to\infty.\label{eq5.7}
\end{align}}
The desired result now follows from \eqref{eq5.5}, \eqref{eq5.6},
and \eqref{eq5.7}.
\end{proof}

\begin{lem}\label{5.3}
Suppose $\psi\in A^2_{\alpha}$ with $\|\psi\|_{A_{\alpha}^2}=1$
and $1\leq p\leq \infty$. Then $B_{\alpha}^{\psi}$ is
bounded from $L^p(\T\times \D)$ to $L^p(\T\times \D)$. Moreover,
the operator norm of $B_{\alpha}^{\psi}$ satisfies
\begin{enumerate}
\item [(a)] $\|B_{\alpha}^{\psi}\|_{L^p(\T\times \D)\to
L^p(\T\times \D)}=1$ if $ p=1$ or $p=\infty$.
\item [(b)] $\|B_{\alpha}^{\psi}\|_{L^p(\T\times \D)\to
L^p(\T\times \D)}\leq1$ if $ 1<p<\infty$.
\end{enumerate}
\end{lem}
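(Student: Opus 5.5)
The plan is to view $B_\alpha^\psi$ as an integral operator on $\T\times\D$ with respect to the Haar measure $dH=\frac{d\theta}{2\pi}\,d\lambda$. Its kernel is
$$K(\theta,z;t,w)=(\alpha+1)\left|\langle U_z^\alpha\psi_\theta,\,U_w^\alpha\psi_t\rangle_{A^2_\alpha}\right|^2\geq0 .$$
We then apply the Schur test with the constant test function, i.e.\ interpolation between $L^1$ and $L^\infty$.

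The key step is to show that the kernel integrates to exactly $1$ in each variable separately. Fix $(e^{i\theta},z)$ and integrate in $(t,w)$. Apply Theorem~\ref{3.4} with $g=h=U_z^\alpha\psi_\theta$ and $\phi=\psi$ (windows $\psi_t$). This gives
$$(\alpha+1)\int_0^{2\pi}\frac{dt}{2\pi}\ind\left|\langle U_z^\alpha\psi_\theta,\,U_w^\alpha\psi_t\rangle\right|^2d\lambda(w)=\|U_z^\alpha\psi_\theta\|^2\|\psi\|^2=1.$$
Here we use that $U_z^\alpha$ is unitary and that $\psi_\theta$ has norm $1$ by rotation invariance of $dA_\alpha$. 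The kernel is symmetric under swapping $(\theta,z)$ and $(t,w)$, since $|\langle a,b\rangle|=|\langle b,a\rangle|$. Hence the integral in $(\theta,z)$ for fixed $(t,w)$ is also $1$. Measurability and joint integrability follow from Tonelli, since the kernel is nonnegative.

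From these identities the bounds follow in turn. For $p=\infty$ we have $|B_\alpha^\psi f|\le \|f\|_\infty$ pointwise, and $B_\alpha^\psi 1=1$, so the norm equals $1$. For $p=1$, Tonelli and the column identity give $\|B_\alpha^\psi f\|_{L^1}\le \int |f|\,dH$. Equality holds for $f\ge0$, for instance the indicator of a set of finite Haar measure, so the norm is exactly $1$. For $1<p<\infty$ we use either Riesz--Thorin or, more directly, Jensen/H\"older with respect to the probability measure $K(\theta,z;\cdot)\,dH$:
$$|B_\alpha^\psi f(e^{i\theta},z)|^p\le\int |f|^pK\,dH .$$
Integrating in $(\theta,z)$ and using the other marginal identity gives $\|B_\alpha^\psi f\|_p\le\|f\|_p$.

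The only point needing care is the application of Theorem~\ref{3.4} with $g=h$ arbitrary in $A^2_\alpha$ and windows $\phi=\psi$. The theorem is stated for all such vectors, so no approximation is required. We should also note that $(\psi_t)$ enters exactly as $\phi_\theta$ does in that theorem, so the identity applies verbatim.
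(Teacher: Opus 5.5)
Your proof is correct, and its core is the same as the paper's. Theorem~\ref{3.4}, applied with $g=h=U^\alpha_z\psi_\theta$ and both windows equal to $\psi$, shows that the nonnegative kernel $K$ has all row and column integrals equal to $1$. The $p=1$ case then goes exactly as in the paper.

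The differences are in the other two cases.

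For the lower bound at $p=\infty$, you note directly that $B_\alpha^\psi 1\equiv 1$. The paper instead tests against the indicators of $\{|z|<1-\frac1n\}$ and lets $n\to\infty$. That is the same identity reached by monotone convergence, so your version is simply cleaner.

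For $1<p<\infty$, the paper cites Riesz--Thorin, while you run the Schur test by hand:
\begin{itemize}
\item the row identity makes $K(\theta,z;\cdot)\,dH$ a probability measure;
\item Jensen then gives $|B_\alpha^\psi f|^p\le B_\alpha^\psi|f|^p$;
\item the column identity, coming from the symmetry $|\langle a,b\rangle|=|\langle b,a\rangle|$, integrates this to $\|B_\alpha^\psi f\|_p\le\|f\|_p$.
\end{itemize}
Your route is elementary and self-contained. It is precisely the inequality the paper itself uses later in the proof of Theorem~\ref{5.4}. The interpolation route is shorter to write but relies on a heavier black box.

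One small correction: Tonelli's theorem presupposes joint measurability of $K$; it does not supply it. Measurability, indeed continuity, of $K$ follows from the norm continuity of $(\theta,z)\mapsto U^\alpha_z\psi_\theta$ in $A^2_\alpha$.
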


\begin{proof}
For $p=1$, it follows from Theorem \ref{3.4} and Fubini's
theorem that{\small
\begin{align}
&\|B_{\alpha}^{\psi}f\|_{L^1(\T\times\D)}=\int_{0}^{2\pi}
\frac{d\theta}{2\pi}\ind|B_{\alpha}^{\psi}f(e^{i\theta},z)|
\,d\lambda(z)\nonumber\\
&\leq\int_{0}^{2\pi}\frac{d\theta}{2\pi}\ind\left((\alpha+1)
\int_{0}^{2\pi}\frac{dt}{2\pi}
\ind|f(e^{it},w)||\langle U_{z}^{\alpha}\psi_{\theta},
\,U_{w}^{\alpha}\psi_{t}\rangle_{A_{\alpha}^2}|^2
\,d\lambda(w)\right)\,d\lambda(z)\nonumber\\
&=\int_{0}^{2\pi}\frac{dt}{2\pi}\ind|f(e^{it},w)|\left((\alpha+1)
\int_{0}^{2\pi}\frac{d\theta}{2\pi}\ind|\langle U_{z}^{\alpha}
\psi_{\theta},\,U_{w}^{\alpha}\psi_{t}\rangle_{A_{\alpha}^2}|^2
\,d\lambda(z)\right)\,d\lambda(w)\nonumber\\
&=\int_{0}^{2\pi}\frac{dt}{2\pi}\ind|f(e^{it},w)|
\,d\lambda(w)\nonumber\\
&=\|f\|_{L^1(\T\times \D)}.\label{eq5.8}
\end{align}}
If $f$ is non-negative, then $B_{\alpha}^{\psi}f(e^{i\theta},z)$
is also non-negative. As a result, in \eqref{eq5.8} equality holds
if $f$ is non-negative. Thus
$\|B_{\alpha}^{\psi}\|_{L^1(\T\times \D)\to L^1(\T\times \D)}=1$.

For $p=\infty$, we use Theorem \ref{3.4} to get
\begin{align*}
|B_{\alpha}^{\psi}f(e^{i\theta},z)|
&\leq(\alpha+1)\int_{0}^{2\pi}\frac{dt}{2\pi}\ind|f(e^{it},w)|
|\langle U_{z}^{\alpha}\psi_{\theta},\,U_{w}^{\alpha}
\psi_{t}\rangle_{A_{\alpha}^2}|^2\,d\lambda(w)\\
&\leq\|f\|_{\infty}(\alpha+1)\int_{0}^{2\pi}\frac{dt}{2\pi}
\ind|\langle U_{z}^{\alpha}\psi_{\theta},\,U_{w}^{\alpha}
\psi_{t}\rangle_{A_{\alpha}^2}|^2\,d\lambda(w)\\
&=\|f\|_{\infty}.
\end{align*}
For any positive integer $n$, let $f_n$ be the characteristic function of
$$\Omega_n=\left\{z\in\D:|z|<1-\frac{1}{n}\right\}.$$
We have{\small
$$\frac{\|B_{\alpha}^{\psi}f_n\|_{\infty}}{\|f_n\|_{\infty}}\geq
\frac{|B_{\alpha}^{\psi}f_n(e^{i\theta},z)|}{\|f_n\|_{\infty}}
=(\alpha+1)\int_{0}^{2\pi}\frac{dt}{2\pi}\int_{\Omega_n}|
\langle U_{z}^{\alpha}\psi_{\theta},\,
U_{w}^{\alpha}\psi_{t}\rangle_{A_{\alpha}^2}|^2
\,d\lambda(w)\to1$$
}as $n\to\infty$. Thus $\|B_{\alpha}^{\psi}\|_{L^{\infty}
(\T\times \D)\to L^{\infty}(\T\times \D)}=1$.

For $1<p<\infty$, the desired conclusion follows from an application of the Riesz-Thorin interpolation theorem. This completes the proof.
\end{proof}

We now prove the first main result of this section.

\begin{thm}\label{5.4}
Let $\psi\in A^2$ with $\|\psi\|_{A^2}=1$, $\psi^{\alpha}=V^{\alpha}\psi$, and $1\leq p<\infty$.
We have
$$\lim_{\alpha\to\infty}\|B_{\alpha}^{\psi^{\alpha}}f-
f\|_{L^p(\T\times \D)}=0$$
for any $f\in L^p(\D,d\lambda)$,
\end{thm}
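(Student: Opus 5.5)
The plan is a standard density argument, built on the uniform bound of Lemma~\ref{5.3} and the pointwise convergence of Lemma~\ref{5.2}. Let $\mathcal C$ be the class of continuous functions with compact support in $\D$, viewed as functions on $\T\times\D$ independent of the first variable. Then $\mathcal C$ is dense in $L^p(\D,d\lambda)$ for $1\le p<\infty$, and $L^p(\D,d\lambda)$ sits isometrically inside $L^p(\T\times\D)$. Given $f\in L^p(\D,d\lambda)$ and $\varepsilon>0$, choose $g\in\mathcal C$ with $\|f-g\|_{L^p}<\varepsilon$. By Lemma~\ref{5.3}, $\|B^{\psi^\alpha}_\alpha\|_{L^p\to L^p}\le1$ for every $\alpha$, since $\|\psi^\alpha\|_{A^2_\alpha}=\|\psi\|_{A^2}=1$. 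Hence
$$\|B^{\psi^\alpha}_\alpha f-f\|_{L^p}\le 2\varepsilon+\|B^{\psi^\alpha}_\alpha g-g\|_{L^p},$$
and it suffices to prove the theorem for $g\in\mathcal C$.

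For such $g$, Lemma~\ref{5.2} gives $B^{\psi^\alpha}_\alpha g\to g$ pointwise on $\T\times\D$. Moreover $|B^{\psi^\alpha}_\alpha g|\le\|g\|_\infty$, which follows from Theorem~\ref{3.4}. To upgrade pointwise convergence to $L^p$ convergence on the infinite measure space $(\T\times\D,dH)$, I would split $\T\times\D$ into $\T\times\{|z|\le\rho\}$ and $\T\times\{|z|>\rho\}$.

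\emph{Compact part.} Fix $\rho<1$ so that $\operatorname{supp}g\subset\{|z|<\rho_0\}$ for some $\rho_0<\rho$. The region $\T\times\{|z|\le\rho\}$ has finite $dH$-measure, so dominated convergence with the constant dominant $2\|g\|_\infty$ gives
$$\int_{\T\times\{|z|\le\rho\}}|B^{\psi^\alpha}_\alpha g-g|^p\,dH\to0.$$

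\emph{Tail.} This is the main obstacle, and a uniform bound like $\|g\|_\infty$ does not suffice. Here $g=0$, so only $\int_{|z|>\rho}|B^{\psi^\alpha}_\alpha g|^p\,dH$ needs control. Since $|B g|\le\|g\|_\infty$ and $|Bg|^p\le\|g\|_\infty^{p-1}|Bg|$, it is enough to show that the $L^1$ mass of $B^{\psi^\alpha}_\alpha|g|$ on $\{|z|>\rho\}$ tends to zero. I would use the $L^1$ computation \eqref{eq5.8}: for nonnegative $|g|$ it gives
$$\int_{\T\times\D}B^{\psi^\alpha}_\alpha|g|\,dH=\int_{\T\times\D}|g|\,dH.$$
By Fatou's lemma (or dominated convergence) on the compact part, together with the pointwise limit from Lemma~\ref{5.2},
$$\int_{\T\times\{|z|\le\rho\}}B^{\psi^\alpha}_\alpha|g|\,dH\to\int|g|\,dH.$$
Subtracting, the tail mass $\int_{|z|>\rho}B^{\psi^\alpha}_\alpha|g|\,dH$ tends to $0$. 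This conservation-of-mass trick avoids any direct estimate of the kernel far from the support, which I expect would otherwise require the delicate Lemma~\ref{5.1}-type concentration estimate.

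Combining the two parts gives $\|B^{\psi^\alpha}_\alpha g-g\|_{L^p}\to0$ for $g\in\mathcal C$. Together with the density step, this gives
$$\limsup_{\alpha\to\infty}\|B^{\psi^\alpha}_\alpha f-f\|_{L^p}\le2\varepsilon,$$
and since $\varepsilon>0$ is arbitrary, the theorem follows.
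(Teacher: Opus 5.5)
Your proof is correct and follows the paper's route. The shared ingredients are the reduction to $g\in C_c(\D)$ via the contraction bound of Lemma~\ref{5.3}, pointwise convergence from Lemma~\ref{5.2}, and the mass identity $\int B^{\psi^\alpha}_\alpha h\,dH=\int h\,dH$ for $h\ge0$, which keeps mass from escaping to the boundary. The only difference is in the last step: the paper applies a generalized dominated convergence theorem with the $\alpha$-dependent dominant $2^{p-1}(B^{\psi^\alpha}_\alpha|g|^p+|g|^p)$, using $|B^{\psi^\alpha}_\alpha g|^p\le B^{\psi^\alpha}_\alpha|g|^p$, whereas you do it by hand through the compact/tail split and the bound $|B^{\psi^\alpha}_\alpha g|^p\le\|g\|_\infty^{p-1}B^{\psi^\alpha}_\alpha|g|$.
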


\begin{proof}
For any $\varepsilon>0$ there is $g\in C_c(\D)$ such that
$$\|g-f\|_{L^p(\D,d\lambda)}<\varepsilon.$$
An application of Lemma \ref{5.3} and the triangle inequality give
\begin{align*}
&\|B_{\alpha}^{\psi^{\alpha}}f-f\|_{L^p(\T\times \D)}
=\|B_{\alpha}^{\psi^{\alpha}}f-B_{\alpha}^{\psi^{\alpha}}g
+B_{\alpha}^{\psi^{\alpha}}g-g+g-f\|_{L^p(\T\times \D)}\\
&\qquad\leq \|B_{\alpha}^{\psi^{\alpha}}(f-g)\|_{L^p(\T\times \D)}
+\|B_{\alpha}^{\psi^{\alpha}}g-g\|_{L^p(\T\times \D)}
+\|g-f\|_{L^p(\D,d\lambda)}\\
&\qquad\leq 2\|g-f\|_{L^p(\D,d\lambda)}+
\|B_{\alpha}^{\psi^{\alpha}}g-g\|_{L^p(\T\times \D)}\\
&\qquad<2\varepsilon+\|B_{\alpha}^{\psi^{\alpha}}g-
g\|_{L^p(\T\times \D)}.
\end{align*}
By Lemma \ref{5.2}, we have
$$\lim_{\alpha\to \infty}B_{\alpha}^{\psi^{\alpha}}
g(e^{i\theta},z)=g(z),\quad (e^{i\theta},z)\in \T\times \D.$$
It is clear that
$$|B_{\alpha}^{\psi^{\alpha}}g(e^{i\theta},z)-g(z)|^p\leq
2^{p-1}(|B_{\alpha}^{\psi^{\alpha}}
g(e^{i\theta},z)|^p+|g(z)|^p)$$
By Theorem \ref{3.4} and H\"{o}lder's inequality,
$$|B_{\alpha}^{\psi^{\alpha}}g(e^{i\theta},z)|^p\leq
(B_{\alpha}^{\psi^{\alpha}}|g|^p)(e^{i\theta},z).$$
An application of Fubini's theorem and Theorem \ref{3.4} then gives
$$\int_{0}^{2\pi}\frac{d\theta}{2\pi}
\ind(B_{\alpha}^{\psi^{\alpha}}|g|^p)(e^{i\theta},z)
\,d\lambda(z)=\|g\|_{L^p(\D,d\lambda)}^p.$$
It follows from the dominated convergence theorem that
$$\lim_{\alpha\to \infty}\|B_{\alpha}^{\psi^{\alpha}}g-
g\|_{L^p(\T\times \D)}=0,$$
which completes the proof of the theorem.
\end{proof}

As an application of Theorem~\ref{5.4}, we will obtain a
Szeg\"{o}-type theorem (Theorem~\ref{5.6}) for localization
operators on weighted Bergman spaces. To simplify notation, we
write $\mathbf L_f^{\psi,\alpha}:=\mathbf L_f^{\psi,\psi,\alpha}$.

\begin{lem}\label{5.5}
Suppose $\psi\in A^2$ with $\|\psi\|_{A^2}=1$, $\psi^{\alpha}=V^{\alpha}\psi$, and $f$ is a
non-negative function in $L^1(\D,d\lambda)\cap L^{\infty}(\D)$.
Then we have
$$\lim_{\alpha\to \infty}\frac{{\rm tr}\,(\mathbf L^{\psi^{\alpha},
\alpha}_f\mathbf L^{\psi^{\alpha},\alpha}_f-
\mathbf L^{\psi^{\alpha},\alpha}_{f^2})}{\alpha+1}=0.$$
\end{lem}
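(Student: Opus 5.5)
Write $L=\mathbf L^{\psi^\alpha,\alpha}_f$ and $L_2=\mathbf L^{\psi^\alpha,\alpha}_{f^2}$. The plan is to compute both traces explicitly through the operator-valued integral \eqref{eq3.12}, reduce their difference to an integral of $f$ against $f-B^{\psi^\alpha}_\alpha f$, and then invoke Theorem~\ref{5.4}.

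\textbf{Step 1: the two traces.} Since $f\ge 0$ lies in $L^1(\T\times\D,dH)$, \eqref{eq3.12} exhibits $L$ and $L_2$ as positive trace class operators. Using $\operatorname{tr}(x\otimes x)=\|x\|^2$ and $\|U^\alpha_z\psi^\alpha_\theta\|=1$, we get
$$\operatorname{tr}L_2=(\alpha+1)\int_0^{2\pi}\frac{d\theta}{2\pi}\ind f(z)^2\,d\lambda(z)=(\alpha+1)\|f\|^2_{L^2(\D,d\lambda)}.$$
For $L^2$, write $L$ as an integral of rank-one operators twice and use
$$\operatorname{tr}\big[(x\otimes x)(y\otimes y)\big]=|\langle x,y\rangle|^2 .$$
By Tonelli's theorem,
$$\operatorname{tr}(L^2)=(\alpha+1)^2\iint f(z)f(w)\,|\langle U^\alpha_z\psi^\alpha_\theta,U^\alpha_w\psi^\alpha_t\rangle|^2\,dH\,dH=(\alpha+1)\int_0^{2\pi}\frac{d\theta}{2\pi}\ind f(z)\,B^{\psi^\alpha}_\alpha f(e^{i\theta},z)\,d\lambda(z).$$
To justify exchanging trace and integrals, I would pass through $\operatorname{tr}(L^2)=\sum_n\langle Le_n,Le_n\rangle$ for an orthonormal basis and use the nonnegativity of all integrands. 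Equivalently, one can compute the Hilbert–Schmidt norm of $L$ directly.

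\textbf{Step 2: reduction to the Berezin transform.} Dividing by $\alpha+1$, Step 1 gives
$$\frac{\operatorname{tr}(L^2-L_2)}{\alpha+1}=\int_{\T\times\D} f\,\big(B^{\psi^\alpha}_\alpha f-f\big)\,dH .$$
By Cauchy–Schwarz this is bounded in absolute value by
$$\|f\|_{L^2(\D,d\lambda)}\,\|B^{\psi^\alpha}_\alpha f-f\|_{L^2(\T\times\D)}.$$
Because $f\in L^1\cap L^\infty$, we have $f\in L^2(\D,d\lambda)$. Theorem~\ref{5.4} with $p=2$ then sends the right-hand side to $0$ as $\alpha\to\infty$, which proves the lemma.

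\textbf{Main obstacle.} The only real subtlety is Step 1. One must verify that $L$ is trace class, so that $\operatorname{tr}(L^2)$ and $\operatorname{tr}L_2$ are separately finite, and that the trace may be exchanged with the Bochner-type integrals. Positivity together with the finiteness bound $\operatorname{tr}L\le(\alpha+1)\|f\|_{L^1(\D,d\lambda)}$ should suffice, via Tonelli on the series of diagonal entries.
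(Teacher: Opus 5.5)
Your proposal is correct and is essentially the paper's proof. The paper likewise expands $\mathbf L^{\psi^\alpha,\alpha}_f\mathbf L^{\psi^\alpha,\alpha}_f$ via \eqref{eq3.12} to get ${\rm tr}(\mathbf L^{\psi^\alpha,\alpha}_f\mathbf L^{\psi^\alpha,\alpha}_f)/(\alpha+1)=\int f\,B^{\psi^\alpha}_\alpha f\,dH$, uses ${\rm tr}\,\mathbf L^{\psi^\alpha,\alpha}_{f^2}=(\alpha+1)\ind f^2\,d\lambda$, and concludes with Theorem~\ref{5.4} for $p=2$; your explicit Cauchy--Schwarz bound is the step the paper leaves implicit.

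One small correction to your justification of the interchange: when you expand $\sum_n\|Le_n\|^2$, the double-integral integrand is not pointwise nonnegative, so Tonelli does not apply. Fubini does the job instead, with the integrand dominated by $f(z)f(w)$ since $|\langle U^\alpha_z(\psi^\alpha)_\theta,U^\alpha_w(\psi^\alpha)_t\rangle|\le1$, hence integrable with total $\|f\|_{L^1(\D,d\lambda)}^2$. The paper does not address this interchange at all.
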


\begin{proof}
By \eqref{eq3.12}, we have
$$\mathbf L^{\psi^{\alpha},\alpha}_f=(\alpha+1)\int_0^{2\pi}
\frac{d\theta}{2\pi}\ind f(e^{i\theta},z)\big[(U^{\alpha}_{z}
(\psi^{\alpha})_\theta)\otimes
(U^{\alpha}_{z}(\psi^{\alpha})_\theta)\big]\,d\lambda(z).$$
So we can write the product
$\mathbf L^{\psi^{\alpha},\alpha}_f
\mathbf L^{\psi^{\alpha},\alpha}_f$
as the following operator-valued integral:
\begin{align*}
(\alpha+1)^2\int_{0}^{2\pi}\frac{dt}{2\pi}\ind\left(\int_{0}^{2\pi}
\frac{d\theta}{2\pi}\ind F(z,w)\big[(U^\alpha_{z}(\psi^{\alpha})_{\theta})\otimes(U^{\alpha}_{w}(\psi^{\alpha})_t)\big]
\,d\lambda(z)\right)\,d\lambda(w),
\end{align*}
where
$$F(z,w)=f(z)f(w)\langle U^{\alpha}_{w}(\psi^{\alpha})_t,\,
U^{\alpha}_{z}(\psi^{\alpha})_{\theta}\rangle_{A_{\alpha}^2}.$$
It follows that{\small
\begin{align*}
&\frac{{\rm tr}\,(\mathbf L^{\psi^{\alpha},\alpha}_f
\mathbf L^{\psi^{\alpha},\alpha}_f)}{\alpha+1}\\
&=(\alpha+1)\int_{0}^{2\pi}\frac{dt}{2\pi}\ind\left[\int_{0}^{2\pi}
\frac{d\theta}{2\pi}\ind f(z)f(w)
|\langle U^{\alpha}_{w}(\psi^{\alpha})_t,\,
U^{\alpha}_{z}(\psi^{\alpha})_{\theta}
\rangle_{A_{\alpha}^2}|^2\,d\lambda(z)\right]\,d\lambda(w)\\
&=\int_{0}^{2\pi}\frac{d\theta}{2\pi}\ind f(z)
\left[(\alpha+1)\int_{0}^{2\pi}\frac{dt}{2\pi}
\ind f(w)|\langle U^{\alpha}_{w}(\psi^{\alpha})_t,\,
U^{\alpha}_{z}(\psi^{\alpha})_\theta
\rangle_{A_{\alpha}^2}|^2\,d\lambda(w)\right]
\,d\lambda(z)\\
&=\int_{0}^{2\pi}\frac{d\theta}{2\pi}\ind f(z)
B_{\alpha}^{\psi^{\alpha}}f(e^{i\theta},z)\,d\lambda(z)
\end{align*}
}Since $f\in L^1(\D,d\lambda)\cap L^{\infty}(\D)
\subset L^2(\D,d\lambda)$, Theorem \ref{5.4} gives
$$\lim_{\alpha\to \infty}\|B_{\alpha}^{\psi^{\alpha}}f-f
\|_{L^2(\T\times\D)}=0.$$
Then we have
\begin{align}
\lim_{\alpha\to \infty}\frac{{\rm tr}
\,(\mathbf L^{\psi^{\alpha},\alpha}_f
\mathbf L^{\psi^{\alpha},\alpha}_f)}{\alpha+1}
&=\lim_{\alpha\to \infty}\int_{0}^{2\pi}\frac{d\theta}{2\pi}
\ind f(z)B_{\alpha}^{\psi^{\alpha}}f(e^{i\theta},z)
\,d\lambda(z)\nonumber\\
&=\int_{0}^{2\pi}\frac{d\theta}{2\pi}\ind f(z)f(z)
\,d\lambda(z)\nonumber\\
&=\ind f(z)f(z)\,d\lambda(z).\label{eq5.9}
\end{align}
By \eqref{eq3.12}, we have
$${\rm tr}\,(\mathbf L^{\psi^{\alpha},\alpha}_{f^2})
=(\alpha+1)\ind f(z)^2\,d\lambda(z).$$
This together with \eqref{eq5.9} implies that
$$\lim_{\alpha\to \infty}\frac{{\rm tr}\,(\mathbf L^{\psi^{\alpha},
\alpha}_f\mathbf L^{\psi^{\alpha},\alpha}_f-
\mathbf L^{\psi^{\alpha},\alpha}_{f^2})}{\alpha+1}=0,$$
completing the proof of the theorem.
\end{proof}

We now arrive at the second main result of this section.

\begin{thm}\label{5.6}
Suppose $\psi\in A^2$ with $\|\psi\|_{A^2}=1$, $\psi^{\alpha}=V^{\alpha}\psi$, and $f$ is a
non-negative function in $L^1(\D,d\lambda)\cap L^{\infty}(\D)$.
If $h\in C[0,\|f\|_{\infty}]$, then
\begin{equation}\label{eq5.10}
\lim_{\alpha\to \infty}\frac{{\rm tr}(\mathbf L^{\psi^{\alpha},\alpha}_f
h(\mathbf L^{\psi^{\alpha},\alpha}_f))}{\alpha+1}=
\ind f(z)h(f(z))\,d\lambda(z).
\end{equation}
\end{thm}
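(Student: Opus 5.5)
The plan is to reduce \eqref{eq5.10} to polynomial $h$ by Weierstrass approximation, and for monomials to use the standard Szeg\H{o}-type squeeze based on Lemma~\ref{5.5} together with positivity and trace-class bounds. Write $L_\alpha=\mathbf L^{\psi^{\alpha},\alpha}_f$. Since $f\ge0$, \eqref{eq3.12} shows that $L_\alpha$ is a positive operator. Theorem~\ref{3.4} gives $\|L_\alpha\|\le\|f\|_\infty$, so the spectrum lies in $[0,\|f\|_\infty]$ and $h(L_\alpha)$ is defined. Taking the trace in \eqref{eq3.12} and using $\|U^\alpha_z(\psi^\alpha)_\theta\|=1$ gives
$$\frac{{\rm tr}(L_\alpha)}{\alpha+1}=\ind f\,d\lambda .$$

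\textbf{Step 1 (reduction to polynomials).} Suppose $|h-q|<\varepsilon$ on $[0,\|f\|_\infty]$. Since $L_\alpha\ge0$, we get $|{\rm tr}(L_\alpha(h-q)(L_\alpha))|\le\varepsilon\,{\rm tr}(L_\alpha)=\varepsilon(\alpha+1)\|f\|_{L^1(d\lambda)}$. On the right-hand side, $\ind f|h(f)-q(f)|\,d\lambda\le\varepsilon\|f\|_{L^1}$. So it suffices to prove \eqref{eq5.10} for $h(x)=x^{k}$, that is,
$$\frac{{\rm tr}(L_\alpha^{k+1})}{\alpha+1}\to\ind f^{k+1}\,d\lambda,\qquad k\ge0.$$

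\textbf{Step 2 (upper bound via Jensen/Berezin).} Here I would use the standard inequality for positive operators of this rank-one-integral form: for convex $\Phi$ with $\Phi(0)=0$, ${\rm tr}\,\Phi(\mathbf L_f)\le{\rm tr}\,\mathbf L_{\Phi(f)}$. To prove it, write ${\rm tr}\,\Phi(L)=\sum_j\Phi(\lambda_j)$ over an eigenbasis $\{v_j\}$. Each eigenvalue is an average,
$$\lambda_j=(\alpha+1)\int\!\!\int f\,|\langle v_j,U^\alpha_z(\psi^\alpha)_\theta\rangle|^2\,\tfrac{d\theta}{2\pi}d\lambda,$$
against a measure $\mu_j$ of total mass $\|v_j\|^2=1$ by Theorem~\ref{3.4}. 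Jensen's inequality gives $\Phi(\lambda_j)\le(\alpha+1)\int\!\!\int\Phi(f)\,d\mu_j$. Summing over $j$ and using $\sum_j|\langle v_j,x\rangle|^2\le\|x\|^2=1$ (valid because $\Phi(f)\ge0$) gives ${\rm tr}\,\mathbf L_{\Phi(f)}$. With $\Phi(x)=x^{k+1}$ this yields
$$\limsup_{\alpha\to\infty}\frac{{\rm tr}\,L_\alpha^{k+1}}{\alpha+1}\le\ind f^{k+1}\,d\lambda.$$

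\textbf{Step 3 (lower bound).} This is the main obstacle. I would first get the case $k=1$ directly from Lemma~\ref{5.5} together with ${\rm tr}\,\mathbf L_{f^2}=(\alpha+1)\int f^2\,d\lambda$. For $k\ge2$ I would bootstrap from the second moment. Let $\nu_\alpha$ be the spectral measure $\frac1{\alpha+1}\sum_j\lambda_j\delta_{\lambda_j}$ on $[0,\|f\|_\infty]$, and let $\nu$ be the pushforward of $f\,d\lambda$ under $f$. These measures have equal zeroth moments (the trace identity) and equal limiting first moments ($k=1$), and Step 2 gives $\limsup\int x^k\,d\nu_\alpha\le\int x^k\,d\nu$ for all $k$, with everything supported in a fixed compact interval. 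The plan is to show $\nu_\alpha\to\nu$ weakly, which gives all moments. For that I would prove the reverse inequality for convex $\Phi$ by a Berezin-type argument: ${\rm tr}\,\Phi(L)\ge\sum$ over the frame $x=U^\alpha_z(\psi^\alpha)_\theta$ of $\Phi(\langle Lx,x\rangle)$, weighted by $(\alpha+1)f$. This is again Jensen, now applied to the spectral measure of $L$ in the state $x$; it needs the weight $f\,dH$ and the identity $\int f\,\langle\cdot\rangle=\ldots$. Note that $\langle Lx,x\rangle$ is exactly $B^{\psi^\alpha}_\alpha f$, so the lower bound becomes $\int f\,\Phi(B_\alpha^{\psi^\alpha}f)/B_\alpha^{\psi^\alpha}f$-type quantities. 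Theorem~\ref{5.4}, via $L^p$ convergence, dominated convergence and boundedness, sends these to $\int f\,\Phi(f)/f=\int\Phi(f)$. If that weighting turns out awkward, the fallback is the simpler variance argument: Lemma~\ref{5.5} shows $\int(x-f)^2$-type discrepancies vanish, and I would try to estimate ${\rm tr}(L^{k+1}-\mathbf L_{f^{k+1}})$ by telescoping $L^{j}\mathbf L_{f^{k+1-j}}-L^{j-1}\mathbf L_{f^{k+2-j}}$ and controlling each term in trace norm through Cauchy--Schwarz and the Hilbert--Schmidt quantity in Lemma~\ref{5.5}.
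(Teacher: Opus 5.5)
Your argument is correct and is essentially the paper's first proof of Theorem~\ref{5.6}. You reduce to monomials; for $L_\alpha=\mathbf L^{\psi^\alpha,\alpha}_f\ge0$ your estimate through ${\rm tr}\,L_\alpha$ is the same as \eqref{eq5.12}--\eqref{eq5.14}. You then squeeze ${\rm tr}(L_\alpha^{k+1})/(\alpha+1)$ between two Jensen-type Berezin inequalities and let Theorem~\ref{5.4} close the gap.

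Your lower bound, once stated correctly, is exactly the Camper--Mitkovski bound the paper uses. With $x=U^\alpha_z(\psi^\alpha)_\theta$, the trace-norm convergent integral \eqref{eq3.12} and Jensen for the spectral measure of $L_\alpha$ at the unit vector $x$ give
\[
{\rm tr}(L_\alpha L_\alpha^{k})=(\alpha+1)\int_{\T\times\D} f\,\langle L_\alpha^k x,x\rangle\,dH\ \ge\ (\alpha+1)\int_{\T\times\D} f\,\bigl(B_\alpha^{\psi^\alpha}f\bigr)^k\,dH .
\]
Your sentence ``$\Phi(\langle Lx,x\rangle)$ weighted by $(\alpha+1)f$'' is false as written. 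For $\Phi(x)=x$ and $f=2\cdot\mathbbm{1}_E$ with $0<\lambda(E)<\infty$ it fails for large $\alpha$, since $\int f\,B_\alpha^{\psi^\alpha}f\,dH\to4\lambda(E)>2\lambda(E)$. The weight $f$ must multiply $h(\langle Lx,x\rangle)$ with $h(x)=\Phi(x)/x$, as your subsequent ``$\Phi(Bf)/Bf$'' correctly indicates. Alternatively, use the unweighted bound ${\rm tr}\,\Phi(L_\alpha)\ge(\alpha+1)\int\Phi(B_\alpha^{\psi^\alpha}f)\,dH$.

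The one real difference from the paper is your upper bound. Your eigenbasis/Jensen argument gives ${\rm tr}(L_\alpha^{k+1})\le(\alpha+1)\ind f^{k+1}\,d\lambda$ for every $\alpha$, which is a non-asymptotic Berezin--Lieb inequality. The paper's upper bound $\int f^k\,B_\alpha^{\psi^\alpha}f\,dH$, taken from \cite[Proposition 2.1]{CM}, needs Theorem~\ref{5.4} a second time.

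Once both bounds hold for each $k$, you do not need the weak-convergence framing or the telescoping fallback. The fallback is roughly the paper's second proof via \cite{FN1}.
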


We will give two different proofs for the theorem. According
to \eqref{eq3.12}, if $\phi,\psi\in A_{\alpha}^2$ and
$f\in L^1(\T\times\D)$, then
\begin{equation}\label{eq5.11}
{\rm tr}\,(\mathbf L_{f}^{\phi,\psi,\alpha})=(\alpha+1)\langle\phi,
\,\psi\rangle_{A_{\alpha}^2}\int_0^{2\pi}\frac{d\theta}{2\pi}
\ind f(e^{i\theta},z)\,d\lambda(z).
\end{equation}
By duality, we have
\begin{equation}\label{eq5.12}
|{\rm tr}\,(\mathbf L^{\psi^{\alpha},\alpha}_fh(\mathbf L^{\psi^{\alpha},
\alpha}_f))|\leq\|h(\mathbf L^{\psi^{\alpha},\alpha}_f)\|_{A_{\alpha}^2}
\|\mathbf L^{\psi^{\alpha},\alpha}_f\|_{\mathcal S^1(A_{\alpha}^2)}
\leq\|h\|_{\infty}\|\mathbf L^{\psi^{\alpha},\alpha}_f
\|_{\mathcal S^1(A_{\alpha}^2)},
\end{equation}
and
\begin{equation}\label{eq5.13}
\|\mathbf L^{\psi^{\alpha},\alpha}_f\|_{\mathcal S^1(A_{\alpha}^2)}
=(\alpha+1)\int_0^{2\pi}\frac{d\theta}{2\pi}\ind f(e^{i\theta},z)
\,d\lambda(z),
\end{equation}
where $\|\cdot\|_{\mathcal S^1(A_{\alpha}^2)}$ is the trace
norm. Moreover, for any polynomial $p$, we have
\begin{align}
&\left|\frac{{\rm tr}\,(\mathbf L^{\psi^{\alpha},\alpha}_f
h(\mathbf L^{\psi^{\alpha},\alpha}_f))}{\alpha+1}-\ind f(z)h(f(z))
\,d\lambda(z)\right|
\leq\left|\frac{{\rm tr}\,\big[\mathbf L^{\psi^{\alpha},\alpha}_f(h-p)
(\mathbf L^{\psi^{\alpha},\alpha}_f)\big]}{\alpha+1}\right|\nonumber\\
&\qquad\qquad+\left|\frac{{\rm tr}(\mathbf L^{\psi^{\alpha},\alpha}_f
p(\mathbf L^{\psi^{\alpha},\alpha}_f))}{\alpha+1}-
\ind f(z)p(f(z))\,d\lambda(z)\right|\nonumber\\
&\qquad\qquad+\left|\ind f(z)\big[p(f(z))-h(f(z))\big]
\,d\lambda(z)\right|.\label{eq5.14}
\end{align}
From \eqref{eq5.12}, \eqref{eq5.13}, and \eqref{eq5.14},
we note that, in order to prove Theorem \ref{5.6}, it
suffices to show that \eqref{eq5.10} holds for $h(x)=x^n$
with $n=0,1,\cdots$. To do this, we first show that the
localization operator $\mathbf L^{\psi^{\alpha},\alpha}_f$ is
a Toeplitz type operator.

For $\psi\in A_{\alpha}^2$ with $\|\psi\|_{A_\alpha^2}=1$, we
define a linear operator $V^{\alpha}_{\psi}$ on $A_{\alpha}^2$ by
$$V^{\alpha}_{\psi}f(e^{i\theta},z)=\langle f,\,U^{\alpha}_{z}
\psi_{\theta}\rangle_{A_{\alpha}^2},\quad f\in A_{\alpha}^2,$$
where $\psi_{\theta}(\zeta)=\psi(e^{i\theta}\zeta)$. By
Theorem \ref{3.4}, we have
$$\langle f,\,g\rangle_{A_{\alpha}^2}=\langle V^{\alpha}_{\psi}f,
\,V^{\alpha}_{\psi}g\rangle_{L^2(\T\times \D,(\alpha+1)dH)},
\quad f,g\in A_{\alpha}^2.$$
This implies that $V^{\alpha}_{\psi}$ is an isometry from
$A_{\alpha}^2$ to $L^2(\T\times \D,(\alpha+1)dH)$. As a
result, the image of $V^{\alpha}_{\psi}$ is a closed
subspace of $L^2(\T\times \D,(\alpha+1)dH)$. We
denote by $\mathcal V^{\alpha}_{\psi}$ the image of
$V^{\alpha}_{\psi}$, namely,
$$\mathcal V^{\alpha}_{\psi}=\{V^{\alpha}_{\psi}f:f\in A_{\alpha}^2\}.$$
Then $V^{\alpha}_{\psi}$ is a unitary operator from $A_{\alpha}^2$
onto $\mathcal V^{\alpha}_{\psi}$. By Theorem \ref{3.4}, the inverse
of $V^{\alpha}_{\psi}$ is given by
$$(V^{\alpha}_{\psi})^{-1}F=(\alpha+1)\int_0^{2\pi}
\frac{d\theta}{2\pi}\ind F(z)(U^{\alpha}_{z}\psi_{\theta})
d\lambda(z),\quad F\in \mathcal V^{\alpha}_{\psi}.$$
For any $(e^{i\theta},z)\in\T\times\D$ and $f\in A_{\alpha}^2$,
from Theorem \ref{3.4} we derive that
$$|V_{\psi}^{\alpha}f(e^{i\theta},z)|=|\langle f,\,U^{\alpha}_{z}
\psi_{\theta}\rangle_{\alpha}|\leq \|f\|_{A_{\alpha}^2}
\|\psi\|_{A_{\alpha}^2}=\|V^{\alpha}_{\psi}f
\|_{L^2(\T\times \D,(\alpha+1)dH)}. $$
Thus $\mathcal V^{\alpha}_{\psi}$ is a reproducing kernel Hilbert
space. For any $F\in \mathcal V^{\alpha}_{\psi}$, we have
$$F(e^{i\theta},z)=V^{\alpha}_{\psi}((V^{\alpha}_{\psi})^{-1}F)
(e^{i\theta},z)=\langle (V^{\alpha}_{\psi})^{-1}F,
\,U^{\alpha}_{z}\psi_{\theta}\rangle_{A_{\alpha}^2}=
\langle F,\,V^{\alpha}_{\psi}(U^{\alpha}_{z}\psi_{\theta})
\rangle_{A_{\alpha}^2}.$$
As a result, the reproducing kernel of
$\mathcal V^{\alpha}_{\varphi}$ at $(e^{i\theta},z)$ is given by
$$V^{\alpha}_{\psi}(U^{\alpha}_{z}\psi_{\theta})
(e^{it},w)=\langle U^{\alpha}_{z}\psi_{\theta},
\, U^{\alpha}_{w}\psi_t\rangle_{A_{\alpha}^2}.$$
Let $P^{\alpha}_{\psi}$ denote the orthogonal projection
from $L^2(\T\times \D,(\alpha+1)dH)$ onto
$\mathcal V^{\alpha}_{\psi}$. Then $P^{\alpha}_{\psi}$
admits the following integral representation
$$P^{\alpha}_{\psi}F(e^{i\theta},z)=(\alpha+1)\int_0^{2\pi}
\frac{dt}{2\pi}\ind F(e^{it},w)\langle U^{\alpha}_{w}\psi_t,
\,U^{\alpha}_{z}\psi_{\theta}\rangle_{A_{\alpha}^2}d\lambda(w).$$
For $f\in L^{\infty}(\T\times\D)$, we define the Toeplitz type operator
$\mathbf T^{\psi,\alpha}_f$ on $\mathcal V^{\alpha}_{\psi}$ by
$$\mathbf T^{\psi,\alpha}_fF=P^{\alpha}_{\psi}(fF).$$

\begin{prop}\label{5.7}
Let $\psi\in A_{\alpha}^2$ with $\|\psi\|_{\alpha}=1$. For
$f\in L^{\infty}(\T\times \D)$, we have
$$(V^{\alpha}_{\psi})^{-1}\mathbf T^{\psi,\alpha}_f
V^{\alpha}_{\psi}=\mathbf L^{\psi,\alpha}_f.$$
\end{prop}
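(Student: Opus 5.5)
The plan is to verify the identity through sesquilinear forms: it suffices to show that for all $g,h\in A^2_\alpha$,
$$\langle \mathbf T^{\psi,\alpha}_f V^\alpha_\psi g,\,V^\alpha_\psi h\rangle_{L^2(\T\times\D,(\alpha+1)dH)}=\langle \mathbf L^{\psi,\alpha}_f g,\,h\rangle_{A^2_\alpha}.$$
This suffices because $V^\alpha_\psi$ is a unitary from $A^2_\alpha$ onto $\mathcal V^\alpha_\psi$. Hence $(V^\alpha_\psi)^{-1}=(V^\alpha_\psi)^*$ on $\mathcal V^\alpha_\psi$, and
$$\langle (V^\alpha_\psi)^{-1}\mathbf T^{\psi,\alpha}_f V^\alpha_\psi g,h\rangle_{A^2_\alpha}=\langle \mathbf T^{\psi,\alpha}_f V^\alpha_\psi g, V^\alpha_\psi h\rangle,$$
with the right-hand side computed in $L^2(\T\times\D,(\alpha+1)dH)$.

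First, $\mathbf T^{\psi,\alpha}_f$ is a bounded operator on $\mathcal V^\alpha_\psi$ for $f\in L^\infty$, since $P^\alpha_\psi$ is an orthogonal projection. Because $V^\alpha_\psi h$ already lies in $\mathcal V^\alpha_\psi$ and $P^\alpha_\psi$ is self-adjoint, the projection can be dropped:
$$\langle P^\alpha_\psi(fV^\alpha_\psi g),V^\alpha_\psi h\rangle=\langle fV^\alpha_\psi g,\,P^\alpha_\psi V^\alpha_\psi h\rangle=\langle fV^\alpha_\psi g,V^\alpha_\psi h\rangle.$$
Writing out this $L^2$ inner product with the definition $V^\alpha_\psi g(e^{i\theta},z)=\langle g,U^\alpha_z\psi_\theta\rangle$ gives
$$(\alpha+1)\int_0^{2\pi}\frac{d\theta}{2\pi}\ind f(e^{i\theta},z)\langle g,U^\alpha_z\psi_\theta\rangle\overline{\langle h,U^\alpha_z\psi_\theta\rangle}\,d\lambda(z).$$
Since $\overline{\langle h,U^\alpha_z\psi_\theta\rangle}=\langle U^\alpha_z\psi_\theta,h\rangle$, this is exactly the form defining $\mathbf L^{\psi,\psi,\alpha}_f$ in Definition~\ref{Bergman localization}.

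The only point requiring care is the absolute convergence of the integrals, which is needed to identify the $L^2$ pairing with the defining form. Here $|f|\le\|f\|_\infty$, and the product $V^\alpha_\psi g\,\overline{V^\alpha_\psi h}$ is integrable by Cauchy--Schwarz combined with the isometry property coming from Theorem~\ref{3.4}. Both sides are therefore bounded sesquilinear forms that agree on all of $A^2_\alpha\times A^2_\alpha$, so the operators coincide. I do not expect a genuine obstacle here; the step needing the most attention is justifying $P^\alpha_\psi V^\alpha_\psi h=V^\alpha_\psi h$ together with the adjoint identity $(V^\alpha_\psi)^{-1}=(V^\alpha_\psi)^*$ on the range, and both follow directly from the unitarity of $V^\alpha_\psi$ onto its closed range $\mathcal V^\alpha_\psi$.
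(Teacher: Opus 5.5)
Your proposal is correct and follows the paper's proof essentially step for step: pair with $h$, use the unitarity of $V^\alpha_\psi$ onto $\mathcal V^\alpha_\psi$ to pass to the $L^2(\T\times\D,(\alpha+1)dH)$ pairing, drop $P^\alpha_\psi$ because $V^\alpha_\psi h\in\mathcal V^\alpha_\psi$, and recognize the defining form of $\mathbf L^{\psi,\alpha}_f$. Your remarks on absolute convergence and on $(V^\alpha_\psi)^{-1}=(V^\alpha_\psi)^*$ on the range make explicit justifications that the paper leaves implicit.
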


\begin{proof}
For $g,h\in A^2_{\alpha}$, we have
\begin{align*}
&\langle (V^{\alpha}_{\psi})^{-1}\mathbf T^{\psi,\alpha}_f
V^{\alpha}_{\psi}g,\,h\rangle_{A_{\alpha}^2}=\langle
\mathbf T^{\psi,\alpha}_fV^{\alpha}_{\psi}g,\,
V^{\alpha}_{\psi} h\rangle_{L^2(\T\times \D,(\alpha+1)dH)}\\
&\qquad=\langle f V^{\alpha}_{\psi}g,\,V^{\alpha}_{\psi} h
\rangle_{L^2(\T\times \D,(\alpha+1)dH)}\\
&\qquad=(\alpha+1)\int_0^{2\pi}\frac{d\theta}{2\pi}
\ind f(e^{i\theta},z)\langle g,\,U^{\alpha}_{z}\psi_{\theta}
\rangle_{A_{\alpha}^2}\langle U^{\alpha}_{z}\psi_{\theta},
\,h\rangle_{A_{\alpha}^2}\,d\lambda(z)\\
&\qquad=\langle \mathbf L^{\psi,\alpha}_fg,
\,h\rangle_{A_{\alpha}^2},
\end{align*}
which finishes the proof.
\end{proof}

We observe that the windowed Berezin transform
$B_{\alpha}^{\psi^{\alpha}}f$ of $f$ is actually the Berezin transform
of $\mathbf T_f^{\psi,\alpha}$ on $\mathcal V_{\psi}^{\alpha}$:
\begin{equation}\label{eq5.15}
B_{\alpha}^{\psi^{\alpha}}f(e^{i\theta},z)=
\langle\mathbf T^{\psi,\alpha}_f V^{\alpha}_{\psi}
(U^{\alpha}_{z}\psi_{\theta}),\,V^{\alpha}_{\psi}
(U^{\alpha}_{z}\psi_{\theta})\rangle_{L^2(\T\times \D,(\alpha+1)dH)}.
\end{equation}
We are now ready to prove Theorem~\ref{5.6}.

\begin{proof}[First proof.]
By Proposition \ref{5.7}, we have
$${\rm tr}\,(\mathbf L^{\psi^{\alpha},\alpha}_f
h(\mathbf L^{\psi^{\alpha},\alpha}_f))={\rm tr}
\,(\mathbf T^{\psi^{\alpha},\alpha}_f
h(\mathbf T^{\psi^{\alpha},\alpha}_f)).$$
It follows from \eqref{eq5.15} and \cite[Proposition 2.1]{CM}
that if $h$ is a monomial then
\begin{align*}
\int_0^{2\pi}\frac{d\theta}{2\pi}\ind
h(B_{\alpha}^{\psi^{\alpha}}f(e^{i\theta},z))f(z)
\,d\lambda(z)
&\leq {\rm tr}\,(\mathbf T^{\psi^{\alpha},\alpha}_f
h(\mathbf T^{\psi^{\alpha},\alpha}_f))\\
&\leq\int_0^{2\pi}\frac{d\theta}{2\pi}\ind h(f(z))
B_{\alpha}^{\psi^{\alpha}}f(e^{i\theta},z)\,d\lambda(z).
\end{align*}
Since $h$ is a monomial, there is a constant $C_h>0$
such that
$$|h(x)-h(y)|\leq C_h|x-y|.$$
By Theorem \ref{5.4}, we have
\begin{align*}
&\left|\int_0^{2\pi}\frac{d\theta}{2\pi}\ind
h(B_{\alpha}^{\psi^{\alpha}}f(e^{i\theta},z))f(z)
\,d\lambda(z)-\int_0^{2\pi}\frac{d\theta}{2\pi}
\ind f(z)h(f(z))\,d\lambda(z)\right|\\
&\qquad\leq\int_0^{2\pi}\frac{d\theta}{2\pi}
\ind|h(B_{\alpha}^{\psi^{\alpha}}
f(e^{i\theta},z))-h(f(z))||f(z)|\,d\lambda(z)\\
&\qquad\leq C_h\|f\|_{\infty}\int_0^{2\pi}
\frac{d\theta}{2\pi}\ind|B_{\alpha}^{\psi^{\alpha}}
f(e^{i\theta},z)-f(z)|\,d\lambda(z)\\
&\qquad=C_h\|f\|_{\infty}\|B_{\alpha}^{\psi^{\alpha}}f-f
\|_{L^1(\T\times \D)}\to 0
\end{align*}
as $\alpha\to\infty$. On the other hand, by Theorem \ref{5.4}
agian, we have
\begin{align*}
&\left|\int_0^{2\pi}\frac{d\theta}{2\pi}\ind h(f(z))
B_{\alpha}^{\psi^{\alpha}}f(e^{i\theta},z)\,d\lambda(z)-
\int_0^{2\pi}\frac{d\theta}{2\pi}\ind f(z)h(f(z))
\,d\lambda(z)\right|\\
&\qquad\leq\int_0^{2\pi}\frac{d\theta}{2\pi}\ind
|B_{\alpha}^{\psi^{\alpha}}f(e^{i\theta},z)-f(z)|
|h(f(z))|\,d\lambda(z)\\
&\qquad\leq\|h\|_{\infty}\|B_{\alpha}^{\psi^{\alpha}}f-f
\|_{L^1(\T\times \D)}\to 0
\end{align*}
as $\alpha\to\infty$. It follows that \eqref{eq5.10} holds
for all monomials.
\end{proof}

\begin{proof}[Second proof.] This proof is based on the
method used in \cite{FN1} and \cite{Wi}.

If $h(z)\equiv 1$, the desired result follows from \eqref{eq5.11}. If
$h(z)=z$, the desired result follows from \eqref{eq5.9}. In view of
Proposition \ref{5.7} and \eqref{eq5.11}, it remains for us to show
that for all $m\geq 2$ we have
\begin{equation}\label{eq5.16}
\lim_{\alpha\to\infty}\frac{{\rm tr}\,((\mathbf T^{\psi^{\alpha},
\alpha}_f)^m-\mathbf T^{\psi^{\alpha},\alpha}_{f^m})}
{\alpha+1}=0.
\end{equation}

By Lemma \ref{5.5} and Proposition \ref{5.7},
\begin{equation}\label{eq5.17}
\lim_{\alpha\to\infty}\frac{{\rm tr}\,(\mathbf T^{\psi^{\alpha},
\alpha}_f\mathbf T^{\psi^{\alpha},\alpha}_f-
\mathbf T^{\psi^{\alpha},\alpha}_{f^2})}{\alpha+1}=0.
\end{equation}
Let $P^{\alpha}_{\psi}$ denote the orthogonal projection from
$L^2(\T\times \D,(\alpha+1)dH)$ onto $\mathcal V^{\alpha}_{\psi}$
and $Q^{\alpha}_{\psi}=I-P^{\alpha}_{\psi}$. We extend
$\mathbf T^{\psi^{\alpha},\alpha}_f$ to the operator
$P^{\alpha}_{\psi}M_fP^{\alpha}_{\psi}$ on
$L^2(\T\times \D,(\alpha+1)dH)$, where $M_f$ is the multiplication
operator by $f$. Applying the same argument used in the proof of
\cite[Theorem 2.1]{FN1}, we derive \eqref{eq5.16} from \eqref{eq5.17}.
\end{proof}

Let $\lambda_i(\mathbf L_{f}^{\psi^{\alpha},\alpha})$ denote the $i$-th
singular value of $\mathbf L_{f}^{\psi^{\alpha},\alpha}$. Observe that
$$\#\left\{i:\lambda_i(\mathbf L_{f}^{\psi^{\alpha},\alpha})>\delta\right\}
={\rm tr}\,(\mathbf L^{\psi^{\alpha},\alpha}_fh_{\delta}
(\mathbf L^{\psi^{\alpha},\alpha}_f)),$$
where
$$h_{\delta}(x)=\frac{\mathbbm 1_{(\delta,\|f\|_{\infty}]}(x)}{x}.$$
Applying Theorem \ref{5.6} and a standard approximation argument
used in \cite[Corollary 2.2]{FN1}, we obtain the following corollary.
We leave the details to the interested reader.

\begin{cor}\label{5.8}
Suppose $\psi\in A^2$ with $\|\psi\|_{A^2}=1$, $\psi^{\alpha}=V^{\alpha}\psi$, and $f$ is a
non-negative function in $L^1(\D,d\lambda)\cap L^{\infty}(\D)$.
For $0<\delta\leq\|f\|_{\infty}$ we have
$$\lim_{\alpha\to \infty}\frac{\#
\left\{i:\lambda_i(\mathbf L_{f}^{\psi^{\alpha},\alpha})
>\delta\right\}}{\alpha+1}=\lambda(\{z\in\D:f(z)>\delta\}).$$
\end{cor}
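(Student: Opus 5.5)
The plan is to reduce the counting function to the trace formula of Theorem~\ref{5.6} by sandwiching the discontinuous function $h_\delta$ between continuous functions. Write $L_\alpha=\mathbf L^{\psi^\alpha,\alpha}_f$. Since $f\ge0$, $L_\alpha$ is positive; it is trace class by \eqref{eq5.13}, and $\|L_\alpha\|\le\|f\|_\infty$. Hence its singular values are its eigenvalues and lie in $[0,\|f\|_\infty]$, and
$$N_\alpha(\delta):=\#\{i:\lambda_i(L_\alpha)>\delta\}={\rm tr}\,\mathbbm 1_{(\delta,\|f\|_\infty]}(L_\alpha).$$
Put $k_\delta(x)=\mathbbm 1_{(\delta,\|f\|_\infty]}(x)$, so that $x h_\delta(x)=k_\delta(x)$. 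For $\eta>0$ small I will choose continuous functions $u_\eta\le k_\delta\le v_\eta$ on $[0,\|f\|_\infty]$. Take $u_\eta$ to be $0$ on $[0,\delta]$, $1$ on $[\delta+\eta,\|f\|_\infty]$, and linear in between. Take $v_\eta$ to be $0$ on $[0,\delta-\eta]$, $1$ on $[\delta,\|f\|_\infty]$, and linear in between. Both vanish near $0$, so $u_\eta(x)=x\,\tilde u_\eta(x)$ and $v_\eta(x)=x\,\tilde v_\eta(x)$ with $\tilde u_\eta,\tilde v_\eta$ continuous on $[0,\|f\|_\infty]$.

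Since $L_\alpha\ge0$, functional calculus gives ${\rm tr}\,u_\eta(L_\alpha)\le N_\alpha(\delta)\le{\rm tr}\,v_\eta(L_\alpha)$. Applying Theorem~\ref{5.6} with $h=\tilde u_\eta$ and with $h=\tilde v_\eta$, I obtain
$$\int_\D u_\eta(f)\,d\lambda\le\liminf_{\alpha\to\infty}\frac{N_\alpha(\delta)}{\alpha+1}\le\limsup_{\alpha\to\infty}\frac{N_\alpha(\delta)}{\alpha+1}\le\int_\D v_\eta(f)\,d\lambda.$$
These integrals are finite because $u_\eta(f)\le v_\eta(f)\le f/(\delta-\eta)$ and $f\in L^1(\D,d\lambda)$.

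Now let $\eta\to0$. Pointwise $u_\eta(f)\uparrow\mathbbm 1_{\{f>\delta\}}$ and $v_\eta(f)\downarrow\mathbbm 1_{\{f\ge\delta\}}$, dominated by $f/(\delta/2)$. So the lower bound tends to $\lambda(\{f>\delta\})$ and the upper bound tends to $\lambda(\{f\ge\delta\})$.

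The main obstacle is the gap between these two limits, since the level set $\{f=\delta\}$ may have positive $\lambda$-measure. When $\lambda(\{f=\delta\})=0$, which holds for all but countably many $\delta$, the two bounds coincide and the corollary follows. For the exceptional $\delta$ I would first try the opposite sandwich: replace the upper bound by upper functions supported strictly above $\delta$, using $N_\alpha(\delta)\le N_\alpha(\delta')$ for $\delta'<\delta$, and handle the lower side similarly. Monotonicity alone only recovers the same pair of bounds, so the statement at such $\delta$ needs either the convention that $\delta$ is a continuity point of the distribution function, as in \cite[Corollary 2.2]{FN1}, or a finer argument about eigenvalue clustering at $\delta$. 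I would state the generic case carefully and flag this point.
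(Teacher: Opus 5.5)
Your sandwich argument is the ``standard approximation argument'' the paper invokes via \cite{FN1}; the paper supplies no further details. Your argument is correct: for every $\delta\in(0,\|f\|_\infty]$ it gives $\lambda(\{f>\delta\})\le\liminf_\alpha N_\alpha(\delta)/(\alpha+1)\le\limsup_\alpha N_\alpha(\delta)/(\alpha+1)\le\lambda(\{f\ge\delta\})$. Hence the corollary holds whenever $\lambda(\{f=\delta\})=0$. It also holds trivially at $\delta=\|f\|_\infty$, since $\|\mathbf L_f^{\psi^\alpha,\alpha}\|\le\|f\|_\infty$ forces $N_\alpha(\|f\|_\infty)=0$. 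The point you flag is not a gap in your proof but a defect of the statement. No finer clustering argument can work at an atom $\delta<\|f\|_\infty$, because the conclusion is false there in general.

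Here is a counterexample. Take $\psi=1$, so that $\psi^\alpha=1$ and $\mathbf L_f^{1,\alpha}=\mathbf T_f^\alpha$. Let $0<u<v<1$, $f=\frac12\mathbbm 1_{\{|z|^2<u\}}+\frac12\mathbbm 1_{\{|z|^2<v\}}$ and $\delta=\frac12$. As in the proof of Theorem~\ref{4.5}, $\mathbf T^\alpha_f$ is diagonal in the monomial basis with eigenvalues $\gamma_n=\frac12(I_n(u)+I_n(v))$, where $I_n(s)=\int_0^sx^n(1-x)^\alpha dx\big/\int_0^1x^n(1-x)^\alpha dx$ decreases in $n$. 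Thus $\gamma_n>\frac12$ if and only if $\int_0^ux^n(1-x)^\alpha dx>\int_v^1x^n(1-x)^\alpha dx$.

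For $n\approx t\alpha$ with $u<\frac t{1+t}<v$, the integrand is increasing on $[0,u]$ and decreasing on $[v,1]$. So these integrals are $e^{\alpha\Phi_t(u)+O(\log\alpha)}$ and $e^{\alpha\Phi_t(v)+O(\log\alpha)}$, with $\Phi_t(x)=t\log x+\log(1-x)$. Combined with monotonicity in $n$, this gives $N_\alpha(\frac12)/(\alpha+1)\to t^*$, where
\[
t^*=\frac{\log\frac{1-u}{1-v}}{\log\frac vu}=\frac{c}{1-c}\quad\text{for some }c\in(u,v)
\]
by Cauchy's mean value theorem. Therefore $\lambda(\{f>\frac12\})=\frac u{1-u}<t^*<\frac v{1-v}=\lambda(\{f\ge\frac12\})$. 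For example, $u=\frac14$, $v=\frac12$ gives $t^*=\log\frac32/\log 2\approx0.585$ instead of $\frac13$. The eigenvalues clustering at the atom value $\frac12$ split to both sides of it in a non-negligible proportion.

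So the corollary needs the hypothesis $\lambda(\{f=\delta\})=0$ (or $\delta=\|f\|_\infty$), which is exactly what your proof establishes. Corollary~\ref{5.9} is unaffected. It only uses $\liminf_\alpha N_\alpha(\delta)/(\alpha+1)\ge\lambda(\{f>\delta\})>0$ for $\delta<\|f\|_\infty$, and your lower sandwich gives that for every $\delta$.
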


We conclude the paper with the following corollary.

\begin{cor}\label{5.9}
Suppose $\psi\in A^2$ with $\|\psi\|_{A^2}=1$, $\psi^{\alpha}=V^{\alpha}\psi$, and
$f\in L^1(\D,d\lambda)\cap L^{\infty}(\D)$ is non-negative. Then
$$\lim_{\alpha\to\infty}\|\mathbf L_f^{\psi^{\alpha},\alpha}
\|_{A_{\alpha}^2}=\|f\|_{\infty}.$$
\end{cor}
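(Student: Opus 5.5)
The plan is to prove the two inequalities separately. The upper bound is already available. By Theorem~\ref{3.4} and the Cauchy--Schwarz inequality we have $\|\mathbf L_f^{\psi^\alpha,\alpha}\|_{A^2_\alpha}\le\|f\|_\infty$ for every $\alpha$, as noted right after Theorem~\ref{3.4}. Therefore $\limsup_{\alpha\to\infty}\|\mathbf L_f^{\psi^\alpha,\alpha}\|\le\|f\|_\infty$, and only the lower bound $\liminf_{\alpha\to\infty}\|\mathbf L_f^{\psi^\alpha,\alpha}\|\ge\|f\|_\infty$ remains.

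For the lower bound I will use Corollary~\ref{5.8}. We may assume $\|f\|_\infty>0$. Fix $0<\delta<\|f\|_\infty$. By the definition of the essential supremum, the set $\{z\in\D: f(z)>\delta\}$ has positive Lebesgue measure, hence positive $\lambda$-measure. Corollary~\ref{5.8} then gives
$$\lim_{\alpha\to\infty}\frac{\#\{i:\lambda_i(\mathbf L_f^{\psi^\alpha,\alpha})>\delta\}}{\alpha+1}=\lambda(\{f>\delta\})>0.$$
So for all sufficiently large $\alpha$ the operator $\mathbf L_f^{\psi^\alpha,\alpha}$ has at least one singular value exceeding $\delta$. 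Since the operator norm is the largest singular value, $\|\mathbf L_f^{\psi^\alpha,\alpha}\|>\delta$ for large $\alpha$. Letting $\delta\uparrow\|f\|_\infty$ yields $\liminf_{\alpha\to\infty}\|\mathbf L_f^{\psi^\alpha,\alpha}\|\ge\|f\|_\infty$.

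The only delicate point is that Corollary~\ref{5.8} was stated with its proof left to the reader. If I want to avoid relying on it, I would argue directly from Theorem~\ref{5.6} instead. Since $f\ge0$, the operator $\mathbf L_f^{\psi^\alpha,\alpha}$ is positive and trace class, with spectrum in $[0,\|f\|_\infty]$. Choose a continuous $h\ge0$ on $[0,\|f\|_\infty]$ that vanishes on $[0,\delta]$ and is positive on $(\delta,\|f\|_\infty]$. Then the right-hand side of \eqref{eq5.10}, $\int_\D f\,h(f)\,d\lambda$, is strictly positive, because the integrand is positive on the set $\{f>\delta\}$, which has positive measure. Hence ${\rm tr}(\mathbf L h(\mathbf L))>0$ for large $\alpha$. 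This forces some eigenvalue of $\mathbf L=\mathbf L_f^{\psi^\alpha,\alpha}$ to lie in $(\delta,\|f\|_\infty]$, which is exactly the needed lower bound. This self-contained route through Theorem~\ref{5.6} is the one I would write out.
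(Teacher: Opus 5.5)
Your proposal is correct. Your first route is exactly the paper's proof: the paper takes $\delta=\|f\|_\infty-\varepsilon$ and uses Corollary~\ref{5.8} to get, for large $\alpha$, a singular value of $\mathbf L_f^{\psi^\alpha,\alpha}$ exceeding $\delta$. You also make explicit that $\lambda(\{f>\delta\})>0$, which the paper uses tacitly.

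The route you say you would actually write out is genuinely different. You do not count singular values. Instead you apply Theorem~\ref{5.6} once, to a single continuous cutoff such as $h(x)=(x-\delta)_+$. You then need only two facts: the limit $\int_\D f\,h(f)\,d\lambda$ is strictly positive, and ${\rm tr}(\mathbf L h(\mathbf L))=\sum_i\mu_i h(\mu_i)$ for the positive trace-class operator $\mathbf L=\mathbf L_f^{\psi^\alpha,\alpha}$, whose eigenvalues $\mu_i$ lie in $[0,\|f\|_\infty]$.

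The two routes trade off as follows.
\begin{itemize}
\item The paper's route is a two-line deduction once Corollary~\ref{5.8} is granted, and it gives more information: about $(\alpha+1)\lambda(\{f>\delta\})$ singular values exceed $\delta$, not just one. But Corollary~\ref{5.8} is left to the reader, and its proof requires approximating the discontinuous function $h_\delta$.
\item Your argument needs no approximation and uses only results proved in the paper. In effect it is the lower-bound half of the argument behind Corollary~\ref{5.8}, namely $\liminf_{\alpha\to\infty}\#\{i:\lambda_i(\mathbf L)>\delta\}/(\alpha+1)\ge\lambda(\{f>\delta\})>0$. That is the only half Corollary~\ref{5.9} needs, and it holds for every $\delta$.
\end{itemize}

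The full equality in Corollary~\ref{5.8} can actually fail at levels where $\lambda(\{f=\delta\})>0$. Take $\psi=1$, $f=2\cdot\mathbbm{1}_{\{R<|z|<R'\}}+\mathbbm{1}_{\{|z|<R\}}$ and $\delta=1$. The operator is then a diagonal Toeplitz operator, and the eigenvalues coming from the inner disc lie slightly above $1$. So the normalized count tends to $\lambda(\{f\ge 1\})$ rather than $\lambda(\{f>1\})$. Your route avoids this issue entirely.
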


\begin{proof}
It is clear that we have $\|\mathbf L_f^{\psi^{\alpha},\alpha}
\|_{A_{\alpha}^2}\leq \|f\|_{\infty}$. For any
$0<\varepsilon<\|f\|_{\infty}$, let $\delta=\|f\|_{\infty}-\varepsilon$.
By Corollary \ref{5.8}, we have
$$\lim_{\alpha\to\infty}\frac{\#\left\{i:\lambda_i
(\mathbf L_{f}^{\psi^{\alpha},\alpha})>\delta\right\}}{\alpha+1}
=\lambda(\{z\in\D:f(z)>\delta\}).$$
Then there is an $\alpha_0>0$ such that for any $\alpha>\alpha_0$
we have
$$\frac{\#\left\{i:\lambda_i(\mathbf L_{f}^{\psi^{\alpha},\alpha})
>\delta\right\}}{\alpha+1}>\lambda(\{z\in \D:f(z)>\delta\})/2.$$
Thus for $\alpha>\alpha_0$ we have
$$\|\mathbf L_f^{\psi^{\alpha},\alpha}\|_{A_{\alpha}^2}>\delta
=\|f\|_{\infty}-\varepsilon.$$
It follows that
$$\lim_{\alpha\to \infty}\|\mathbf L_f^{\psi^{\alpha},\alpha}
\|_{A_{\alpha}^2}=\|f\|_{\infty}.$$
This completes the proof.
\end{proof}

\section*{Acknowledgements}
Ma was supported by NNSF of China (Grant numbers 12171484,
12571140), NSF of Hunan Province (Grant number 2023JJ20056),
the Science and Technology Innovation Program of Hunan
Province (Grant number 2023RC3028), and Central South University
Innovation-Driven Research Programme (Grant number 2023CXQD032),
Hunan Basic Science Research Center for Mathematical Analysis (2024JC2002).
Yan was partially supported by NNSF of China (12401150) and the
Fundamental Research Funds for the Central Universities (2024CDJXY018).

\end{document}